\documentclass[a4paper, 10pt, draft]{amsart}
\usepackage{amsmath}
\usepackage{amssymb}
\usepackage{amsthm}
\usepackage{comment}
\usepackage{shuffle}
\usepackage{enumerate}
\newtheorem{theorem}{Theorem}
\newtheorem*{theorem*}{Theorem}
\newtheorem{lemma}{Lemma}
\newtheorem*{lemma*}{Lemma}

\newtheorem*{corollary*}{Corollary}
\newtheorem{proposition}{Proposition}
\newtheorem*{proposition*}{Proposition}

\theoremstyle{definition}
\newtheorem{definition}{Definition}
\newtheorem*{definition*}{Definition}
\newtheorem{remark}{Remark}
\newtheorem*{remark*}{Remark}

\newtheorem*{example*}{Example}
\newtheorem{conjecture}{Conjecture}
\newtheorem*{conjecture*}{Conjecture}
\allowdisplaybreaks[1]

\DeclareMathOperator{\Li}{Li}
\DeclareMathOperator{\Ls}{Ls}
\DeclareMathOperator{\SLs}{SLs}
\DeclareMathOperator{\Cl}{Cl}
\DeclareMathOperator{\Gl}{Gl}

\begin{document}
\title[Evaluation of iterated log-sine integrals]{Evaluation of iterated log-sine integrals in terms of multiple polylogarithms}
\author{RYOTA UMEZAWA}
\subjclass[2010]{Primary 11M32, Secondary 40B05}
\keywords{Multiple zeta values, Iterated log-sine integrals, multiple polylogarithms.}
\date{}
\maketitle

\begin{abstract}
It is known that multiple zeta values can be written in terms of certain iterated log-sine integrals. Conversely, we evaluate iterated log-sine integrals in terms of multiple polylogarithms and multiple zeta values in this paper. We also suggest some conjectures on multiple zeta values, multiple Clausen values, multiple Glaisher values and iterated log-sine integrals. 
\end{abstract}
\section{Introduction}
For an index $\mathbf{k} = (k_{1},\dots,k_{n})$, we define the weight of $\mathbf{k}$ by $|\mathbf{k}| = k_{1}+\dots+k_{n}$, and the depth of $\mathbf{k}$ by ${\rm dep}(\mathbf{k})=n$. We call an index $\mathbf{k}=(k_{1},\dots,k_{n})$ \textit{admissible index} if $\mathbf{k}\in\mathbb{N}^{n}$ and $k_{n}\ge2$. We regard $\emptyset$ as the admissible index of weight $0$ and depth $0$. For two indices $\mathbf{k} = (k_{1},\dots,k_{n})$ and $\mathbf{l} = (l_{1},\dots,l_{n})$, we define
\begin{align*}
\mathbf{k}\pm\mathbf{l} = (k_{1}\pm l_{1},\dots,k_{n}\pm l_{n}).
\end{align*}
We use the notation $\{k\}^{n}$ for $n$ repetitions of $k$. For example, $(1,1,2) = (\{1\}^{2},2)$.

Iterated log-sine integrals are the following integrals introduced by the author \cite{U}.
\begin{definition}[Iterated log-sine integrals]
For $\sigma \in \mathbb{R}$, $\mathbf{k} = (k_{1}, \dots, k_{n}) \in \mathbb{N}^{n}$ and $\mathbf{l} = (l_{1},\dots,l_{n}) \in (\mathbb{Z}_{\ge 0})^{n}$, we define
\[\Ls_{\mathbf{k}}^{\mathbf{l}}(\sigma)=(-1)^{n}\int_{0}^{\sigma}\int_{0}^{\theta_{n}}\dots\int_{0}^{\theta_{2}}
\prod_{u=1}^{n}\theta_u^{l_u}\left(A(\theta_{u})\right)^{k_u-1-l_u}\,d\theta_{1}\cdots d\theta_{n},\]
where $A(\theta) = \log\left|2\sin(\theta/2)\right|$. We understand $\Ls_{\emptyset}^{\emptyset}(\sigma)=1$.
\end{definition}
If $k_{u} -1-l_{u} \ge 0$ for all $u \in \{1,\dots,n\}$, then this integral converges absolutely for any $\sigma \in \mathbb{R}$. 
In the case $n=1$, $\Ls_{k}^{(l)}(\sigma)$ is called (generalized) log-sine integrals.
It was shown that iterated log-sine integrals are related to multiple zeta values and multiple polylogarithms by the author \cite{U}. In particular, all multiple zeta values can be written as a $\mathbb{Q}$-linear combination of products of $\pi^m\ (m \ge 0)$ and an iterated log-sine integral at $\pi/3$ satisfying $k_{u} -1 - l_{u} > 0$ for all $u \in \{1,\dots,n\}$. Here, multiple zeta values and multiple polylogarithms are defined as follows, respectively:
\begin{definition}[Multiple zeta values]
For an admissible index $\mathbf{k} = (k_{1},\dots,k_{n})$, we define
\[\zeta(\mathbf{k}) = \sum_{0 < m_1  < \dots < m_n} \frac{1}{m_{1}^{k_{1}} \cdots m_{n}^{k_{n}}}.\]
We understand $\zeta(\emptyset)=1$.
\end{definition}
\begin{definition}[Multiple polylogarithms]
For $\mathbf{k} = (k_{1},\dots,k_{n}) \in \mathbb{N}^{n}$, we define
\[\mathrm{Li}_{\mathbf{k}}(z) = \sum_{0<m_{1} < \dots < m_{n}} \frac{z^{m_{n}}}{m_{1}^{k_{1}} \cdots m_{n}^{k_{n}}}.\]
We understand $\Li_{\emptyset}(z)=1$.
\end{definition}
Multiple polylogarithms converge absolutely when $|z| < 1$, and can be continued holomorphically to $\mathbb{C}\setminus[1,\infty)$.  If $\mathbf{k}$ is admissible, it converges absolutely on the disc $|z| \le 1$ and $\Li_{\mathbf{k}}(1)=\zeta(\mathbf{k})$ holds.

In this paper, we evaluate iterated log-sine integrals by using multiple polylogarithms and multiple zeta values.
For generalized log-sine integrals, such a subject are discussed in \cite[Section 5]{BS}, \cite{DK1}, \cite{DK2} and \cite[Section 2.3]{KS}.
The main theorem in this paper is the following theorem shown in Section \ref{se:evalLs}.
\begin{theorem}\label{th:evalLs}Let \[\mathbf{1}_{n} = (\{1\}^{n}).\]
For $\sigma \in [0,2\pi]$, $\mathbf{k} = (k_{1}, \dots, k_{n}) \in \mathbb{N}^{n}$ and  $\mathbf{l} = (l_{1},\dots,l_{n}) \in (\mathbb{Z}_{\ge 0})^{n}$ satisfying $k_{u} -1-l_{u} \ge 0$ for all $u \in \{1,\dots,n\}$, we have
\begin{align}
&\Ls_{\mathbf{k}}^{\mathbf{l}}(\sigma) \label{eq:main}\\
&=i^{|\mathbf{l}|+n}(-1)^{|\mathbf{k}|+n}\sum_{\mathbf{p}+\mathbf{q}+\mathbf{r}=\mathbf{k}-\mathbf{1}_{n}-\mathbf{l}}\frac{\left(-i\pi\right)^{|\mathbf{p}|}}{2^{|\mathbf{p}|+|\mathbf{q}|}}\binom{\mathbf{k}-\mathbf{1}_{n}-\mathbf{l}}{\mathbf{p},\mathbf{q},\mathbf{r}}F_{\mathbf{q}+\mathbf{l}}^{\mathbf{r}}(\sigma),\nonumber
\end{align}
where the sum is over all $\mathbf{p}=(p_{1},\dots,p_{n})\in(\mathbb{Z}_{\ge0})^{n}$, $\mathbf{q}=(q_{1},\dots,q_{n})\in(\mathbb{Z}_{\ge0})^{n}$ and $\mathbf{r}=(r_{1},\dots,r_{n})\in(\mathbb{Z}_{\ge0})^{n}$ satisfying $\mathbf{p}+\mathbf{q}+\mathbf{r}=\mathbf{k}-\mathbf{1}_{n}-\mathbf{l}$, and
\[\binom{\mathbf{k}-\mathbf{1}_{n}-\mathbf{l}}{\mathbf{p},\mathbf{q},\mathbf{r}} = \prod_{u=1}^{n}\frac{(k_{u}-1-l_{u})!}{p_{u}!q_{u}!r_{u}!}.\]
\end{theorem}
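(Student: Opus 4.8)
The plan is to reduce everything to an elementary branch identity for $A(\theta)$. For $\theta\in(0,2\pi)$ one computes $1-e^{i\theta}=-2i\sin(\theta/2)\,e^{i\theta/2}=2\sin(\theta/2)\,e^{i(\theta-\pi)/2}$, and since $\sin(\theta/2)>0$ and $(\theta-\pi)/2\in(-\pi/2,\pi/2)$ this is a polar representation whose argument lies in the domain of the principal logarithm; hence
\[
A(\theta)=\log\bigl|2\sin(\theta/2)\bigr|=\log\bigl(1-e^{i\theta}\bigr)-\tfrac{i}{2}(\theta-\pi)=\frac{i\pi}{2}-\frac{i}{2}\,\theta+\log\bigl(1-e^{i\theta}\bigr),\qquad\theta\in(0,2\pi).
\]
Thus each factor $A(\theta_u)$ decomposes into three summands — the constant $\frac{i\pi}{2}$, the monomial $-\frac{i}{2}\theta_u$, and $\log(1-e^{i\theta_u})$ — and these will feed the three multi-indices $\mathbf{p}$, $\mathbf{q}$, $\mathbf{r}$ respectively.

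First I would substitute this identity into the definition of $\Ls_{\mathbf{k}}^{\mathbf{l}}(\sigma)$; this is legitimate because the identity holds on the full open interval $(0,2\pi)$, the excluded endpoints form a null set, and for $\sigma=2\pi$ the logarithmic singularities of $\log(1-e^{i\theta_u})$ at $\theta_u\in\{0,2\pi\}$ are integrable, just as those of $A$ are (here $k_u-1-l_u\ge0$ is used). Next, expand each $\bigl(A(\theta_u)\bigr)^{k_u-1-l_u}$ by the trinomial theorem,
\[
\bigl(A(\theta_u)\bigr)^{k_u-1-l_u}=\sum_{p_u+q_u+r_u=k_u-1-l_u}\binom{k_u-1-l_u}{p_u,q_u,r_u}\Bigl(\frac{i\pi}{2}\Bigr)^{p_u}\Bigl(-\frac{i}{2}\Bigr)^{q_u}\theta_u^{\,q_u}\bigl(\log(1-e^{i\theta_u})\bigr)^{r_u},
\]
multiply the $n$ resulting finite sums and distribute, so that $\prod_{u=1}^{n}\theta_u^{l_u}\bigl(A(\theta_u)\bigr)^{k_u-1-l_u}$ becomes a finite sum over triples $(\mathbf{p},\mathbf{q},\mathbf{r})$ with $\mathbf{p}+\mathbf{q}+\mathbf{r}=\mathbf{k}-\mathbf{1}_n-\mathbf{l}$, the term for $(\mathbf{p},\mathbf{q},\mathbf{r})$ carrying the scalar $\binom{\mathbf{k}-\mathbf{1}_n-\mathbf{l}}{\mathbf{p},\mathbf{q},\mathbf{r}}(i\pi/2)^{|\mathbf{p}|}(-i/2)^{|\mathbf{q}|}$ times $\prod_{u}\theta_u^{\,q_u+l_u}\bigl(\log(1-e^{i\theta_u})\bigr)^{r_u}$. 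Since the sum is finite it may be taken out of the iterated integral, and the surviving iterated integral $(-1)^n\int_0^\sigma\!\cdots\!\int_0^{\theta_2}\prod_u\theta_u^{\,q_u+l_u}\bigl(\log(1-e^{i\theta_u})\bigr)^{r_u}\,d\theta_1\cdots d\theta_n$ is, by the definition of $F$, a fixed scalar multiple of $F_{\mathbf{q}+\mathbf{l}}^{\mathbf{r}}(\sigma)$.

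What remains is pure bookkeeping of constants: the $(-1)^n$ in the definition of $\Ls$, the $(i\pi/2)^{|\mathbf{p}|}(-i/2)^{|\mathbf{q}|}$ from the expansion, and the normalisation absorbed into $F$. Writing $|\mathbf{k}|=n+|\mathbf{l}|+|\mathbf{p}|+|\mathbf{q}|+|\mathbf{r}|$, the crucial simplification is $(-1)^{|\mathbf{k}|+n}(-i\pi)^{|\mathbf{p}|}=(-1)^{|\mathbf{l}|+|\mathbf{q}|+|\mathbf{r}|}(i\pi)^{|\mathbf{p}|}$, which is exactly what reconciles the $\mathbf{p}$-dependence of \eqref{eq:main} with that of the trinomial expansion; the remaining powers of $i$, $-1$ and $2$ then collapse into the prefactor $i^{|\mathbf{l}|+n}(-1)^{|\mathbf{k}|+n}(-i\pi)^{|\mathbf{p}|}2^{-|\mathbf{p}|-|\mathbf{q}|}$ of \eqref{eq:main}. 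The only step demanding real care is the branch identity for $A(\theta)$: one must get the correction term $-\frac{i}{2}(\theta-\pi)$ right (not its complex conjugate, which would spoil the sign in $(-i\pi)^{|\mathbf{p}|}$) and confirm its validity throughout $(0,2\pi)$; everything after that is routine.
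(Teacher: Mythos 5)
There is a genuine gap. The first half of your argument — the branch identity $A(\theta)=\log(1-e^{i\theta})-\tfrac{i}{2}(\theta-\pi)$ on $(0,2\pi)$, the trinomial expansion of each $(A(\theta_u))^{k_u-1-l_u}$, and the bookkeeping of the constants $(\pm i\pi/2)^{p_u}(\pm i/2)^{q_u}$ — is exactly the paper's Lemma \ref{le:LstoILi}, and that part of your proposal is fine. But it only reduces $\Ls_{\mathbf{k}}^{\mathbf{l}}(\sigma)$ to the iterated integrals ${\rm ILi}_{\mathbf{q}+\mathbf{l}}^{\mathbf{r}}(\sigma)=\int_{0<\theta_1<\dots<\theta_n<\sigma}\prod_u i(i\theta_u)^{q_u+l_u}(\Li_1(e^{i\theta_u}))^{r_u}\,d\theta_u$. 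Your final sentence asserts that this integral ``is, by the definition of $F$, a fixed scalar multiple of $F_{\mathbf{q}+\mathbf{l}}^{\mathbf{r}}(\sigma)$.'' That is not true: $F_{\mathbf{q}}^{\mathbf{r}}(\sigma)$ is \emph{not} defined as an iterated integral. It is defined (Definition \ref{def:F}) as an inclusion--exclusion sum over partitions of $(\mathbf{q},\mathbf{r})$ of products of the functions $f$, which are themselves explicit $\mathbb{Q}(i)$-linear combinations of powers of $\sigma$, multiple zeta values, and multiple polylogarithms at $e^{i\sigma}$ built from the combinatorial data $B_{\mathbf{q}}$, $C_{\overline{\mathbf{q}}}^{\mathbf{j}}$ and the shuffle words $w_{\mathbf{j}}^{\mathbf{r}''}$.

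The identity ${\rm ILi}_{\mathbf{q}}^{\mathbf{r}}(\sigma)=F_{\mathbf{q}}^{\mathbf{r}}(\sigma)$ is Proposition \ref{pr:ILitoF}, and it is the substantive content of the theorem — it is the step that actually \emph{evaluates} the iterated integral in terms of multiple polylogarithms, which is what the theorem claims to do. Establishing it requires: (a) verifying that the explicit functions $f$ satisfy the differential recursion $\frac{d}{d\sigma}f_{\mathbf{q}}^{\mathbf{r}}(\sigma)=i(i\sigma)^{q_n}(\Li_1(e^{i\sigma}))^{r_n}f_{\mathbf{q}_-}^{\mathbf{r}_-}(\sigma)$ (Proposition \ref{pr:diff}, a nontrivial telescoping computation with the coefficients $C$ and the words $w$); (b) passing from $f$ to $F$ via the alternating partition sum precisely so that $F_{\mathbf{q}}^{\mathbf{r}}(0)=0$ while the same differential recursion is preserved (Proposition \ref{prop:difF}); and (c) an induction on the depth $n$ integrating the recursion. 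None of this appears in your proposal, so the proof as written does not establish the stated formula; it stops where the real work begins.
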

The function $F_{\mathbf{q}+\mathbf{l}}^{\mathbf{r}}(\sigma)$ will be defined in Section \ref{se:defF}. In particular, the function  $F_{\mathbf{q}+\mathbf{l}}^{\mathbf{r}}(\sigma)$ is written as a $\mathbb{Q}(i)$-linear combination of products of a power of $\sigma$, multiple zeta values and a multiple polylogarithm with admissible index at $e^{i\sigma}$. The equation (\ref{eq:main}) makes numerical evaluation of iterated log-sine integrals possible. Another algorithm for numerical evaluation of the generalized log-sine integral was given by Kalmykov and Sheplyakov \cite{KS}.

The real and imaginary parts of multiple polylogarithms at $e^{i\sigma}$ are called as follows.
\begin{definition}[Multiple Clausen function, multiple Glaisher function]\label{de:CG}
The multiple Clausen function $\Cl_{\mathbf{k}}(\sigma)$ and the multiple Glaisher function $\Gl_{\mathbf{k}}(\sigma)$ are defined by 
\begin{align*}
\Cl_{\mathbf{k}}(\sigma)&=\begin{cases}
\Im(\Li_{\mathbf{k}}(e^{i\sigma}))\quad \text{if}\ |\mathbf{k}|:{\rm even},\\
\Re(\Li_{\mathbf{k}}(e^{i\sigma}))\quad \text{if}\ |\mathbf{k}|:{\rm odd},
\end{cases}\\
\Gl_{\mathbf{k}}(\sigma)&=\begin{cases}
\Re(\Li_{\mathbf{k}}(e^{i\sigma}))\quad \text{if}\ |\mathbf{k}|:{\rm even},\\
\Im(\Li_{\mathbf{k}}(e^{i\sigma}))\quad \text{if}\ |\mathbf{k}|:{\rm odd}.
\end{cases}
\end{align*}
\end{definition}
In particular, we call the values $\Cl_{\mathbf{k}}(\pi/3)$ and $\Gl_{\mathbf{k}}(\pi/3)$ multiple Clausen values and multiple Glaisher values, respectively. The real part of the equation (\ref{eq:main}) can be also regarded as a relation among an iterated log-sine integral, multiple zeta values, multiple Clausen functions and Glaisher functions.
Similarly, the imaginary part of the equation (\ref{eq:main}) can be regarded as a relation among multiple zeta values, multiple Clausen functions and Glaisher functions.

In Section \ref{se:evalLsgeneral}, we discuss the iterated log-sine integrals at a general argument not only in the case $\sigma\in[0, 2\pi]$ and give the following theorem.
\begin{theorem}\label{th:Lsgeneral}
For $\sigma \in [0,2\pi]$ and $m\ge0$, the iterated log-sine integral $\Ls_{\mathbf{k}}^{\mathbf{l}}(\pm(2m\pi+\sigma))$ satisfying $k_{u} -1-l_{u} \ge 0$ for all $u \in \{1,\dots,n\}$ can be written as a $\mathbb{Q}(i)$-linear combination of products of a power of $\sigma$, a power of $\pi$, multiple zeta values and multiple polylogarithms with admissible index at $e^{i\sigma}$.
\end{theorem}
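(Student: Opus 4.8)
The plan is to reduce the general-argument case to Theorem~\ref{th:evalLs}, which already covers the window $\sigma\in[0,2\pi]$, by peeling off full periods one variable at a time. First one disposes of the sign: substituting $\theta_u\mapsto-\theta_u$ in every variable of the defining integral and using that $A$ is even ($A(-\theta)=A(\theta)$) gives a reflection identity $\Ls_{\mathbf{k}}^{\mathbf{l}}(-x)=(-1)^{n+|\mathbf{l}|}\,\Ls_{\mathbf{k}}^{\mathbf{l}}(x)$ for every $x\in\mathbb{R}$, and since the sign lies in $\mathbb{Q}(i)$ it suffices to treat $\Ls_{\mathbf{k}}^{\mathbf{l}}(2m\pi+\sigma)$ with $m\ge0$ and $\sigma\in[0,2\pi]$.

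I would then argue by induction on the depth $n$. For $n=0$ the integral is $1$, and for every $n$ the case $m=0$ is exactly Theorem~\ref{th:evalLs} (at $\sigma=2\pi$ the factor $\Li_{\mathbf{j}}(e^{i\sigma})$ becomes $\Li_{\mathbf{j}}(1)=\zeta(\mathbf{j})$, and at $\sigma=0$ everything is trivial, so the claimed form is respected). For the inductive step with $m\ge1$, differentiating the outermost variable in the definition yields $\Ls_{\mathbf{k}}^{\mathbf{l}}(x)=-\int_0^x\theta^{l_n}A(\theta)^{k_n-1-l_n}\,\Ls_{\mathbf{k}'}^{\mathbf{l}'}(\theta)\,d\theta$ with $\mathbf{k}'=(k_1,\dots,k_{n-1})$, $\mathbf{l}'=(l_1,\dots,l_{n-1})$. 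Splitting $\int_0^{2m\pi+\sigma}=\sum_{j=0}^{m-1}\int_{2j\pi}^{2(j+1)\pi}+\int_{2m\pi}^{2m\pi+\sigma}$, substituting $\theta=2j\pi+\phi$ (respectively $\theta=2m\pi+\phi$), using the periodicity $A(2j\pi+\phi)=A(\phi)$, expanding $(2j\pi+\phi)^{l_n}$ by the binomial theorem, and inserting the inductive hypothesis for $\Ls_{\mathbf{k}'}^{\mathbf{l}'}(2j\pi+\phi)$ (legitimate since each $2j\pi+\phi$ has $\phi\in[0,2\pi]$), the whole expression becomes a $\mathbb{Q}(i)$-linear combination of powers of $\pi$, multiple zeta values, and integrals of the shape $\int_0^{x}\phi^{b}A(\phi)^{c}\Li_{\mathbf{j}}(e^{i\phi})\,d\phi$ with $\mathbf{j}$ admissible, $b,c\ge0$, and $x$ either equal to $2\pi$ or lying in $[0,2\pi]$.

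The remaining ingredient is an analytic lemma: any such integral lies in the required class. For $\phi\in(0,2\pi)$ one has $1-e^{i\phi}=2\sin(\phi/2)\,e^{i(\phi-\pi)/2}$, hence $A(\phi)=-\Li_1(e^{i\phi})-\tfrac{i}{2}(\phi-\pi)$; expanding $A(\phi)^{c}$ turns the integrand into a combination of terms $\phi^{b'}\pi^{b''}\,\Li_1(e^{i\phi})^{c'}\Li_{\mathbf{j}}(e^{i\phi})$, and the shuffle product for multiple polylogarithms at a common argument rewrites $\Li_1(e^{i\phi})^{c'}\Li_{\mathbf{j}}(e^{i\phi})$ as a $\mathbb{Z}$-linear combination of $\Li_{\mathbf{m}}(e^{i\phi})$ with all entries of $\mathbf{m}$ positive. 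Finally $\int_0^x\phi^{b'}\Li_{\mathbf{m}}(e^{i\phi})\,d\phi$ is evaluated by repeated integration by parts, using that an antiderivative of $\Li_{(m_1,\dots,m_r)}(e^{i\phi})$ is $-i\,\Li_{(m_1,\dots,m_r+1)}(e^{i\phi})$; the index $(m_1,\dots,m_r+1)$ is admissible, so the boundary contributions at $\phi=0$ are genuine multiple zeta values $\zeta(m_1,\dots,m_r+1)$, which moreover vanish whenever the accompanying power of $\phi$ is positive. This produces $\mathbb{Q}(i)$-linear combinations of powers of $x$, powers of $\pi$, multiple zeta values and multiple polylogarithms with admissible index at $e^{ix}$; specialising to $x=2\pi$ collapses the polylogarithms to multiple zeta values, which closes the induction and, together with the reflection identity, proves the theorem.

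The main difficulty is bookkeeping rather than a single hard estimate: one must verify that every step above — the binomial expansions carrying the running shifts $2j\pi$, the shuffle reductions, the iterated integrations by parts — keeps the output inside the prescribed class, and in particular that the inevitably appearing non-admissible polylogarithms $\Li_1(e^{i\phi}),\dots$ are always integrated at least once before being evaluated, so that every boundary term at $\phi\to0$ converges. The only genuinely analytic points are the identity $A(\phi)=-\Li_1(e^{i\phi})-\tfrac{i}{2}(\phi-\pi)$, valid on the open interval only but sufficient under the integral sign, and the justification of integration by parts at the logarithmically singular endpoint $\phi=0$; both are routine.
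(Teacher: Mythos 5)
Your proof is correct, but it takes a genuinely different route from the paper's. The paper reduces everything to Theorem \ref{th:evalLs} by purely combinatorial manipulations of the integration domain: a decomposition lemma that splits the full simplex $0<\theta_1<\dots<\theta_n<2m\pi+\sigma$ according to where the cut point $\rho=2m\pi$ falls among the variables (Lemma \ref{le:decoform}), a shift lemma using periodicity of $A$ and the reflection formula to turn $\Ls_{\mathbf{k}}^{\mathbf{l}}(2m\pi;2m\pi+\sigma)$ back into integrals over $[0,\sigma]$ (Lemma \ref{le:Lsshift}), and an induction on $m$ to show $\Ls_{\mathbf{k}}^{\mathbf{l}}(2m\pi)$ is a combination of powers of $\pi$ and multiple zeta values (Lemma \ref{le:ls2mpitozeta}); no new integrals ever have to be evaluated. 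You instead induct on the depth $n$, peel off only the outermost variable, split its range period by period, and then evaluate the resulting one-dimensional integrals $\int_0^x\phi^{b}A(\phi)^{c}\Li_{\mathbf{j}}(e^{i\phi})\,d\phi$ directly via $A(\phi)=-\Li_1(e^{i\phi})-\tfrac{i}{2}(\phi-\pi)$, the shuffle product, and iterated integration by parts --- in effect re-deriving for this special case the machinery that the paper packaged into Section \ref{se:evalLs} (your integration-by-parts step is essentially Proposition \ref{pr:diff} in depth one). Your route is more self-contained but requires the analytic lemma and its convergence bookkeeping; the paper's is more structural and gets the $\Ls(2m\pi)$ values as an explicit by-product. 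One point you should make explicit: your induction hypothesis must be the \emph{functional} form of the theorem (a single $\mathbb{Q}(i)$-linear combination, with coefficients independent of $\phi$, valid for all $\phi\in[0,2\pi]$), since you substitute it under the integral sign; this is indeed what your argument produces at every stage, so it is a matter of formulation rather than a gap.
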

In Section \ref{se:Con}, we will introduce and discuss the shifted log-sine integrals. Then, we suggest some conjectures on shifted log-sine integrals, multiple zeta values, multiple Clausen values, multiple Glaisher values and iterated log-sine integrals. Lower weight parts of the conjectures have been checked by numerical evaluation.
\section{Evaluation of iterated log-sine integrals at $\sigma\in[0, 2\pi]$}\label{se:evalLs}
In this section, we evaluate iterated log-sine integrals in terms of multiple polylogarithms and multiple zeta values and give a proof of Theorem \ref{th:evalLs}. 
\subsection{Evaluation of iterated log-sine integrals in terms of ${\rm ILi}_{\mathbf{q}}^{\mathbf{r}}(\sigma)$} For $\mathbf{k}=(k_{1},\dots,k_{n})$,  we define $\mathbf{k}_{-} = (k_{1},\dots,k_{n-1})$.
In order to prove Theorem \ref{th:evalLs}, we first introduce ${\rm ILi}_{\mathbf{q}}^{\mathbf{r}}(\sigma)$ defined as follows:
\begin{definition}
For $\mathbf{q} = (q_{1},\dots,q_{n}) \in (\mathbb{Z}_{\ge 0})^{n}$, $\mathbf{r} = (r_{1},\dots,r_{n}) \in (\mathbb{Z}_{\ge 0})^{n}$ and $\sigma \in [0,2\pi]$, we define
\begin{align*}
{\rm ILi}_{\mathbf{q}}^{\mathbf{r}}(\sigma) &= \int_{0}^{\sigma}\int_{0}^{\theta_{n}}\cdots\int_{0}^{\theta_{2}}
\prod_{u=1}^{n}i(i\theta_{u})^{q_{u}}\left(\Li_{1}(e^{i\theta_{u}})\right)^{r_{u}}\,d\theta_{1}\cdots d\theta_{n}\\
&=\int_{0}^{\sigma}i(i\theta_{n})^{q_{n}}\left(\Li_{1}(e^{i\theta_{n}})\right)^{r_{n}}{\rm ILi}_{\mathbf{q}_{-}}^{\mathbf{r}_{-}}(\theta_{n})\,d\theta_{n},
\end{align*}
where ${\rm ILi}_{\emptyset}^{\emptyset}(\sigma)$ is regarded as $1$.
\end{definition}
Then, we can evaluate iterated log-sine integrals in terms of ${\rm ILi}_{\mathbf{q}}^{\mathbf{r}}(\sigma)$ as follows:
\begin{lemma}\label{le:LstoILi}
For $\sigma \in [0,2\pi]$, $\mathbf{k} = (k_{1}, \dots, k_{n}) \in \mathbb{N}^{n}$ and $\mathbf{l} = (l_{1},\dots,l_{n}) \in (\mathbb{Z}_{\ge 0})^{n}$ satisfying $k_{u} -1-l_{u} \ge 0$ for all $u \in \{1,\dots,n\}$, we have
\begin{align*}
&\Ls_{\mathbf{k}}^{\mathbf{l}}(\sigma)=i^{|\mathbf{l}|+n}(-1)^{|\mathbf{k}|+n}\sum_{\mathbf{p}+\mathbf{q}+\mathbf{r}=\mathbf{k}-\mathbf{1}_{n}-\mathbf{l}}\frac{\left(-i\pi\right)^{|\mathbf{p}|}}{2^{|\mathbf{p}|+|\mathbf{q}|}}\binom{\mathbf{k}-\mathbf{1}_{n}-\mathbf{l}}{\mathbf{p},\mathbf{q},\mathbf{r}}{\rm ILi}_{\mathbf{q}+\mathbf{l}}^{\mathbf{r}}(\sigma).
\end{align*}
\end{lemma}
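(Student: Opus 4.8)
The plan is to rewrite the integrand of $\Ls_{\mathbf{k}}^{\mathbf{l}}(\sigma)$ factor by factor in terms of $\theta_u$ and $\Li_1(e^{i\theta_u})$, so that the result matches the definition of ${\rm ILi}$. The key identity is the elementary relation
\[
A(\theta) = \log\!\left|2\sin(\theta/2)\right| = \Re\bigl(\log(1-e^{i\theta})\bigr) = -\Re\bigl(\Li_1(e^{i\theta})\bigr),
\]
valid for $\theta\in(0,2\pi)$, together with the companion fact that $\Im\bigl(\log(1-e^{i\theta})\bigr) = (\theta-\pi)/2$, i.e.
\[
\Li_1(e^{i\theta}) = -\log(1-e^{i\theta}) = -A(\theta) + i\,\frac{\pi-\theta}{2}.
\]
Hence $A(\theta) = -\Li_1(e^{i\theta}) + i\frac{\pi-\theta}{2} = -\Li_1(e^{i\theta}) - \frac{i\theta}{2} + \frac{i\pi}{2}$. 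This expresses each factor $A(\theta_u)$ appearing in $\Ls_{\mathbf{k}}^{\mathbf{l}}$ as a sum of three terms: a multiple of $\Li_1(e^{i\theta_u})$, a multiple of $i\theta_u$, and a multiple of $i\pi$.

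First I would substitute this expression for $A(\theta_u)$ into the integrand of $\Ls_{\mathbf{k}}^{\mathbf{l}}(\sigma)$. Each factor is $\left(A(\theta_u)\right)^{k_u-1-l_u}$, which we expand by the trinomial theorem: writing $A(\theta_u) = \bigl(-\tfrac{i\pi}{2}\bigr)\cdot(-1) + \bigl(-\tfrac{i\theta_u}{2}\bigr) + \bigl(-\Li_1(e^{i\theta_u})\bigr)$ — more carefully, $A(\theta_u) = \tfrac{i\pi}{2} - \tfrac{i\theta_u}{2} - \Li_1(e^{i\theta_u})$ — we get
\[
\left(A(\theta_u)\right)^{k_u-1-l_u} = \sum_{p_u+q_u+r_u = k_u-1-l_u}\binom{k_u-1-l_u}{p_u,q_u,r_u}\left(\frac{i\pi}{2}\right)^{p_u}\left(-\frac{i\theta_u}{2}\right)^{q_u}\left(-\Li_1(e^{i\theta_u})\right)^{r_u}.
\]
Multiplying by the remaining factor $\theta_u^{l_u}$ gives a term $\theta_u^{q_u+l_u}\left(\Li_1(e^{i\theta_u})\right)^{r_u}$ up to explicit constants, which is exactly the shape of the $u$-th factor of the ${\rm ILi}_{\mathbf{q}+\mathbf{l}}^{\mathbf{r}}$ integrand (whose $u$-th factor is $i(i\theta_u)^{q_u+l_u}\left(\Li_1(e^{i\theta_u})\right)^{r_u}$). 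Then I would collect the constants: the overall prefactor $(-1)^n$ from the definition of $\Ls$; the product over $u$ of $\bigl(\tfrac{i\pi}{2}\bigr)^{p_u}\bigl(-\tfrac{i}{2}\bigr)^{q_u}(-1)^{r_u}$; and the factor $i^{-(|\mathbf{q}|+|\mathbf{l}|)}i^{-n}$ needed to convert $\prod_u \theta_u^{q_u+l_u}$ into $\prod_u i(i\theta_u)^{q_u+l_u}$. Since the nested integration regions $0<\theta_1<\dots<\theta_n<\sigma$ are identical on both sides and the integrand factorizes over $u$, the multiple integral of the expanded sum is term-by-term the sum of the corresponding ${\rm ILi}_{\mathbf{q}+\mathbf{l}}^{\mathbf{r}}(\sigma)$, weighted by the product of trinomial coefficients $\binom{\mathbf{k}-\mathbf{1}_n-\mathbf{l}}{\mathbf{p},\mathbf{q},\mathbf{r}}$.

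It then remains to check that all the scalar factors combine into the claimed coefficient $i^{|\mathbf{l}|+n}(-1)^{|\mathbf{k}|+n}\frac{(-i\pi)^{|\mathbf{p}|}}{2^{|\mathbf{p}|+|\mathbf{q}|}}$. Tracking powers of $i$: from $\prod_u\bigl(\tfrac{i\pi}{2}\bigr)^{p_u}$ we get $i^{|\mathbf{p}|}$, from $\prod_u\bigl(-\tfrac{i}{2}\bigr)^{q_u}$ we get $(-i)^{|\mathbf{q}|}$, from $\prod_u(-1)^{r_u}$ we get $(-1)^{|\mathbf{r}|}$, and from converting $\theta_u^{q_u+l_u}$ to $i(i\theta_u)^{q_u+l_u}$ we multiply by $i^{-n}\cdot i^{-(|\mathbf{q}|+|\mathbf{l}|)} = i^{-n-|\mathbf{q}|-|\mathbf{l}|}$; combined with the $(-1)^n$ from the $\Ls$ definition and using $|\mathbf{p}|+|\mathbf{q}|+|\mathbf{r}| = |\mathbf{k}|-n-|\mathbf{l}|$, a short computation with $i^4=1$ should reduce everything to the stated form. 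The only real obstacle is a bookkeeping one — making sure the powers of $i$, of $-1$, and of $2$ line up exactly, and that the validity of $A(\theta)=-\Re\Li_1(e^{i\theta})$ and $\Li_1(e^{i\theta}) = -A(\theta) - \tfrac{i\theta}{2} + \tfrac{i\pi}{2}$ at the endpoints $\theta\in\{0,2\pi\}$ does not cause trouble (it does not, since the $\Ls$ integrand is absolutely integrable under the hypothesis $k_u-1-l_u\ge 0$, and the singularities at $0$ and $2\pi$ are integrable). No deeper idea is needed beyond this substitution and expansion.
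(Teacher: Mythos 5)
Your proposal is correct and follows essentially the same route as the paper: substitute $\Li_1(e^{i\theta})=-A(\theta)-i(\theta-\pi)/2$ into the integrand, expand each factor $A(\theta_u)^{k_u-1-l_u}$ by the trinomial theorem, and match the resulting integrals with ${\rm ILi}_{\mathbf{q}+\mathbf{l}}^{\mathbf{r}}(\sigma)$; the constant bookkeeping you outline does close up to the stated coefficient (using $|\mathbf{p}|+|\mathbf{q}|+|\mathbf{r}|=|\mathbf{k}|-n-|\mathbf{l}|$). The only cosmetic difference is that the paper moves the prefactor $i^{|\mathbf{l}|+n}(-1)^{|\mathbf{k}|+n}$ to the left-hand side before expanding, whereas you track the signs at the end.
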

\begin{proof}
Noting $\Li_{1}(e^{i\theta}) = -\log(1-e^{i\theta}) = -A(\theta)-i\frac{\theta-\pi}{2}$ $(0<\theta<2\pi)$, we have
\begin{align*}
&(-i)^{|\mathbf{l}|+n}(-1)^{|\mathbf{k}|+n}\Ls_{\mathbf{k}}^{\mathbf{l}}(\sigma)\\
&=(-i)^{|\mathbf{l}|+n}(-1)^{|\mathbf{k}|}\int_{0<\theta_{1}<\dots<\theta_{n}<\sigma}\prod_{u=1}^{n}\theta_u^{l_u}A^{k_u-1-l_u}(\theta_{u})\,d\theta_u\\
&=\int_{0<\theta_{1}<\dots<\theta_{n}<\sigma}\prod_{u=1}^{n}i
\left(i\theta_u\right)^{l_u}\left(-A(\theta_{u})\right)^{k_u-1-l_u}\,d\theta_u\\
&=\int_{0<\theta_{1}<\dots<\theta_{n}<\sigma}\prod_{u=1}^{n}i
\left(i\theta_u\right)^{l_u}\left(\Li_{1}(e^{i\theta_{u}})+\frac{i\theta_{u}}{2}-\frac{i\pi}{2}\right)^{k_u-1-l_u}\,d\theta_u\\
&=\int_{0<\theta_{1}<\dots<\theta_{n}<\sigma}\prod_{u=1}^{n}i
\left(i\theta_u\right)^{l_u}\sum_{p_{u}+q_{u}+r_{u}=
k_{u}-1+l_{u}}\frac{(k_{u}-1-l_{u})!}{p_{u}!q_{u}!r_{u}!}\\
&\quad\times\left(-\frac{i\pi}{2}\right)^{p_{u}}\left(\frac{i\theta_{u}}{2}\right)^{q_{u}}\left(\Li_{1}(e^{i\theta_{u}})\right)^{r_{u}}\,d\theta_u\\
&=\sum_{\mathbf{p}+\mathbf{q}+\mathbf{r}=\mathbf{k}-\mathbf{1}_{n}-\mathbf{l}}\frac{\left(-i\pi\right)^{|\mathbf{p}|}}{2^{|\mathbf{p}|+|\mathbf{q}|}}\left(\prod_{u=1}^{n}\frac{(k_{u}-1-l_{u})!}{p_{u}!q_{u}!r_{u}!}\right)\\
&\quad\times\int_{0<\theta_{1}<\dots<\theta_{n}<\sigma}\prod_{u=1}^{n}i
\left(i\theta_{u}\right)^{q_{u}+l_{u}}\left(\Li_{1}(e^{i\theta_{u}})\right)^{r_{u}}\,d\theta_u.
\end{align*}
\end{proof}
In order to prove Theorem \ref{th:evalLs},
we only need to prove ${\rm ILi}_{\mathbf{q}}^{\mathbf{r}}(\sigma)=F_{\mathbf{q}}^{\mathbf{r}}(\sigma)$.
\subsection{Evaluation of ${\rm ILi}_{\mathbf{q}}^{\mathbf{r}}(\sigma)$}\label{se:defF}
Let $\mathbf{q},\,\mathbf{r},\,\mathbf{j}\in(\mathbb{Z}_{\ge 0})^{n}$. We define the rational number $B_{\mathbf{q}}$ by
\[B_{\mathbf{q}} = \frac{1}{|\mathbf{q}|+n}B_{\mathbf{q}_{-}} \quad \text{and} \quad B_{\emptyset}=1,\]
and the rational number $C_{\mathbf{q}}^{\mathbf{j}}$ by
\[C_{\mathbf{q}}^{\mathbf{j}} = (-1)^{j_{n}}\frac{(|\mathbf{q}|-|\mathbf{j}_{-}|)!}{(|\mathbf{q}|-|\mathbf{j}|)!} C_{\mathbf{q}_{-}}^{\mathbf{j}_{-}} \quad \text{and} \quad C_{\emptyset}^{\emptyset}=1.\]
We write $\mathbf{j} \preceq \mathbf{q}$ when $\mathbf{j}$ and $\mathbf{q}$ satisfy
\begin{align*}
j_{1}&\le q_{1}\\
j_{1}+j_{2}&\le q_{1}+q_{2}\\
&\ \ \vdots\\
j_{1}+j_{2}+\dots+j_{n} &\le q_{1}+q_{2}+\dots+q_{n},
\end{align*}
and understand $\emptyset\preceq\emptyset$.
We define the noncommutative polynomial ring $\mathfrak{H} = \mathbb{Q}\langle e_{0}, e_{1}\rangle$ and its subspace $\mathfrak{H}^{0} = \mathbb{Q}+e_{1}\mathfrak{H}e_{0}$ and $\mathfrak{H}^{1} = \mathbb{Q}+e_{1}\mathfrak{H}$, and the element $w_{\mathbf{j}}^{\mathbf{r}}$ in $\mathfrak{H}$ by
\[w_{\mathbf{j}}^{\mathbf{r}} =  (w_{\mathbf{j}_{-}}^{\mathbf{r}_{-}} \shuffle e_{1}^{\shuffle r_{n}})e_{0}^{1+j_{n}} \quad {\rm and} \quad w_{\emptyset}^{\emptyset}=1,\]
where the shuffle product $\shuffle$ is recursively defined by
\begin{align*}
w\shuffle1&=w,\ \,\,1\shuffle w=w\qquad\qquad\qquad\qquad(w:\text{word}),\\
u_{1}w_{1}\shuffle u_{2}w_{2}&=u_{1}(w_{1}\shuffle u_{2}w_{2})+u_{2}(u_{1}w_{1}\shuffle w_{2})\ (u_{1},u_{2} \in \{e_{0},e_{1}\}, w_{1},w_{2}:\text{words})
\end{align*}
with $\mathbb{Q}$-bilinearity, and $e_{1}^{\shuffle r_{n}}$ represents
\[\underbrace{e_{1}\shuffle \cdots \shuffle e_{1}}_{r_{n}}.\] Note that $w_{\mathbf{j}}^{\mathbf{r}} \in \mathfrak{H}^{0}$ when $r_{1}\ge1$.
We define $\mathbb{Q}$-linear maps $Z\colon \mathfrak{H}^{0} \to \mathbb{R}$ and $L(\,\cdot\,;z )\colon \mathfrak{H}^{1}\to \mathbb{C}$ by
\begin{align*}
Z(e_{1} e_{0}^{k_{1}-1}\cdots e_{1}e_{0}^{k_{n}-1}) &= \zeta(k_{1},\dots,k_{n}),
\\
L(e_{1} e_{0}^{k_{1}-1}\cdots e_{1}e_{0}^{k_{n}-1};z) &= \Li_{k_{1},\dots,k_{n}}(z),
\end{align*}
and
\begin{align*}
Z(1) &=\zeta(\emptyset)=1,
\\
L(1;z) &= \Li_{\emptyset}(z)=1,
\end{align*}
respectively. Note that $L(w_{1};z)L(w_{2};z)=L(w_{1}\shuffle w_{2};z)$ for any $w_{1}, w_{2} \in \mathfrak{H}^{1}$.

For a pair of indices $\mathbf{q}, \mathbf{r}\in (\mathbb{Z}_{\ge0})^{n}$, we also write $\mathbf{r}=(\{0\}^{n'},r''_{1},\dots,r''_{n''})=(\mathbf{r}',\mathbf{r}'') \in (\mathbb{Z}_{\ge 0})^{n'+n''}$ with $r''_{1} \ge 1$, and $\mathbf{q}=(q'_{1},\dots,q'_{n'},q''_{1},\dots,q''_{n''})=(\mathbf{q}',\mathbf{q}'')\in (\mathbb{Z}_{\ge 0})^{n'+n''}$. Note that $q''_{n''} = q_{n}$, $r''_{n''} = r_{n}$ and $n=n'+n''$. We define $\overline{\mathbf{q}} = (|\mathbf{q}'|+n'+q_{1}'',q_{2}'',\dots,q_{n''}'')$ when $\mathbf{q}''\neq\emptyset$, and $\overline{\mathbf{q}}=\emptyset$ when $\mathbf{q}''=\emptyset$. For example, if $\mathbf{q}=(1,2,3,4)$ and $\mathbf{r}=(0,0,1,2)$ then
$\mathbf{q}^{\prime}=(1,2)$, $\mathbf{q}^{\prime\prime}=(3,4)$, $\mathbf{r}^{\prime}=(0,0)$, $\mathbf{r}^{\prime\prime}=(1,2)$ and $\overline{\mathbf{q}}=(8,4)$.

\begin{definition}\label{def:f}
For $\sigma \in [0,2\pi]$, $\mathbf{q} \in (\mathbb{Z}_{\ge 0})^{n}$ and $\mathbf{r} \in (\mathbb{Z}_{\ge 0})^{n}$, we define
\begin{align*}
f_{\mathbf{q}}^{\mathbf{r}}(\sigma) = 
B_{\mathbf{q}'}\sum_{\mathbf{j} \preceq \overline{\mathbf{q}}}C_{\overline{\mathbf{q}}}^{\mathbf{j}}(i\sigma)^{|\mathbf{q}|+n'-|\mathbf{j}|}L(w_{\mathbf{j}}^{\mathbf{r}''};e^{i\sigma}),
\end{align*}
where the sum is over $\mathbf{j}=(j_{1},\dots,j_{n''})\in(\mathbb{Z}_{\ge 0})^{n''}$ satisfying $\mathbf{j} \preceq \overline{\mathbf{q}}$.
\end{definition}
The function $f_{\mathbf{q}}^{\mathbf{r}}(\sigma)$ can also be written as follows:
\begin{align}
f_{\mathbf{q}}^{\mathbf{r}}(\sigma) = \label{eq:f(sigma)}
\begin{cases}
1 &{\rm if}\ \mathbf{r}=\emptyset,\\
B_{\mathbf{q}}(i\sigma)^{|\mathbf{q}|+n}
&{\rm if}\ \mathbf{r}=(\{0\}^{n}), n\ge 1,\\
B_{\mathbf{q}'}\sum_{\mathbf{j} \preceq \overline{\mathbf{q}}}C_{\overline{\mathbf{q}}}^{\mathbf{j}}(i\sigma)^{|\overline{\mathbf{q}}|-|\mathbf{j}|}L(w_{\mathbf{j}}^{\mathbf{r}''};e^{i\sigma})&{\rm otherwise}.
\end{cases}
\end{align}
\begin{remark}\label{re:f0}We have
\begin{align}
f_{\mathbf{q}}^{\mathbf{r}}(0) = \label{eq:f(0)}
\begin{cases}
1 &{\rm if}\ \mathbf{r}=\emptyset,\\
0&{\rm if}\ \mathbf{r}=(\{0\}^{n}), n\ge 1,\\
\begin{gathered}
B_{\mathbf{q}'}\sum_{\mathbf{j}_{-} \preceq \overline{\mathbf{q}}_{-}}C_{\overline{\mathbf{q}}_{-}}^{\mathbf{j}_{-}}(-1)^{|\overline{\mathbf{q}}|-|\mathbf{j}_{-}|}(|\overline{\mathbf{q}}|-|\mathbf{j}_{-}|)!\\
\times Z\left((w_{\mathbf{j}_{-}}^{\mathbf{r}''_{-}} \shuffle e_{1}^{\shuffle r_{n}})e_{0}^{1+|\overline{\mathbf{q}}|-|\mathbf{j}_{-}|}\right)
\end{gathered}&{\rm otherwise}.
\end{cases}
\end{align}
\end{remark}
\begin{proof}
We prove only the case ``otherwise''. In this case, we obtain
\begin{align}
f_{\mathbf{q}}^{\mathbf{r}}(\sigma) &= B_{\mathbf{q}'}\sum_{\mathbf{j} \preceq \overline{\mathbf{q}}}C_{\overline{\mathbf{q}}}^{\mathbf{j}}(i\sigma)^{|\overline{\mathbf{q}}|-|\mathbf{j}|}L\left(w_{\mathbf{j}}^{\mathbf{r}''};e^{i\sigma}\right)\label{eq:ind}\\
&=B_{\mathbf{q}'}\sum_{\mathbf{j}_{-} \preceq \overline{\mathbf{q}}_{-}}C_{\overline{\mathbf{q}}_{-}}^{\mathbf{j}_{-}}\sum_{j_{n''}=0}^{|\overline{\mathbf{q}}|-|\mathbf{j}_{-}|}(-1)^{j_{n''}}\frac{(|\overline{\mathbf{q}}|-|\mathbf{j}_{-}|)!}{(|\overline{\mathbf{q}}|-|\mathbf{j}|)!}\nonumber\\
&\quad\times(i\sigma)^{|\overline{\mathbf{q}}|-|\mathbf{j}|}L\left((w_{\mathbf{j}_{-}}^{\mathbf{r}''_{-}} \shuffle e_{1}^{\shuffle r_{n}})e_{0}^{1+j_{n''}};e^{i\sigma}\right).\nonumber
\end{align}
By putting $\sigma=0$, we find that the factor $(i\sigma)^{|\overline{\mathbf{q}}|-|\mathbf{j}|}$ is equal to $0$ when $j_{n''}\neq|\overline{\mathbf{q}}|-|\mathbf{j}_{-}|$, and $1$ when $j_{n''}=|\overline{\mathbf{q}}|-|\mathbf{j}_{-}|$. Therefore, we obtain the desired formula.
\end{proof}
\begin{proposition}\label{pr:diff}
For $\mathbf{q}, \mathbf{r}\in(\mathbb{Z}_{\ge0})^{n}$ with $n\ge1$, we have
\[\frac{d}{d\sigma}f_{\mathbf{q}}^{\mathbf{r}}(\sigma) = i(i\sigma)^{q_{n}}\left(\Li_{1}(e^{i\sigma})\right)^{r_{n}}f_{\mathbf{q}_{-}}^{\mathbf{r}_{-}}(\sigma).\]
\end{proposition}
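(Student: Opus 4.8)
The plan is to compute $\frac{d}{d\sigma}f_{\mathbf{q}}^{\mathbf{r}}(\sigma)$ directly from the explicit formula (\ref{eq:f(sigma)}), treating separately the branches in which $(\mathbf{q},\mathbf{r})$ falls. The branch $\mathbf{r}=(\{0\}^{n})$ is immediate: there $f_{\mathbf{q}}^{\mathbf{r}}(\sigma)=B_{\mathbf{q}}(i\sigma)^{|\mathbf{q}|+n}$, and differentiating, using $(|\mathbf{q}|+n)B_{\mathbf{q}}=B_{\mathbf{q}_{-}}$ together with $|\mathbf{q}|+n-1=q_{n}+|\mathbf{q}_{-}|+(n-1)$, gives $i(i\sigma)^{q_{n}}B_{\mathbf{q}_{-}}(i\sigma)^{|\mathbf{q}_{-}|+(n-1)}$; by (\ref{eq:f(sigma)}) again (with $f_{\emptyset}^{\emptyset}(\sigma)=1$ when $n=1$) this is $i(i\sigma)^{q_{n}}f_{\mathbf{q}_{-}}^{\mathbf{r}_{-}}(\sigma)$, and $(\Li_{1}(e^{i\sigma}))^{r_{n}}=1$ since $r_{n}=0$, so the claim holds here.

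For the remaining branch, where $\mathbf{r}$ has a positive entry (so $n''\ge1$ and $r_{1}''\ge1$), I would start from the expansion (\ref{eq:ind}) of $f_{\mathbf{q}}^{\mathbf{r}}(\sigma)$ derived in the proof of Remark~\ref{re:f0}: a sum over $\mathbf{j}_{-}\preceq\overline{\mathbf{q}}_{-}$ of $B_{\mathbf{q}'}C_{\overline{\mathbf{q}}_{-}}^{\mathbf{j}_{-}}$ times the inner sum $\sum_{k=0}^{a}(-1)^{k}\tfrac{a!}{(a-k)!}(i\sigma)^{a-k}L(v e_{0}^{1+k};e^{i\sigma})$, where $a=|\overline{\mathbf{q}}|-|\mathbf{j}_{-}|$ and $v=w_{\mathbf{j}_{-}}^{\mathbf{r}''_{-}}\shuffle e_{1}^{\shuffle r_{n}}$. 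A preliminary remark is that $v$ is a $\mathbb{Q}$-linear combination of words each beginning with $e_{1}$: if $r_{n}\ge1$ this is because every word in $e_{1}^{\shuffle r_{n}}$ begins with $e_{1}$ and $w_{\mathbf{j}_{-}}^{\mathbf{r}''_{-}}$ is either $1$ or lies in $\mathfrak{H}^{0}$, while if $r_{n}=0$ then necessarily $n''\ge2$, so $w_{\mathbf{j}_{-}}^{\mathbf{r}''_{-}}\in\mathfrak{H}^{0}$ and $v=w_{\mathbf{j}_{-}}^{\mathbf{r}''_{-}}$. Hence $v e_{0}^{k}\in\mathfrak{H}^{1}$ for every $k\ge0$, so $L(v e_{0}^{k};e^{i\sigma})$ is defined, and for $\sigma\in(0,2\pi)$ (where $e^{i\sigma}\notin[1,\infty)$) the chain rule applied to $\tfrac{d}{dz}L(we_{0};z)=\tfrac1z L(w;z)$ yields $\tfrac{d}{d\sigma}L(v e_{0}^{1+k};e^{i\sigma})=iL(v e_{0}^{k};e^{i\sigma})$.

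The crux is differentiating the inner sum. By the product rule and this last identity, $\tfrac{d}{d\sigma}$ of the inner sum is the sum of the contribution of $\tfrac{d}{d\sigma}(i\sigma)^{a-k}$ and that of $\tfrac{d}{d\sigma}L(v e_{0}^{1+k};e^{i\sigma})$; after shifting the summation index in the former, each contribution takes the shape $\pm\,i\sum(-1)^{k}\tfrac{a!}{(a-k)!}(i\sigma)^{a-k}L(v e_{0}^{k};e^{i\sigma})$ --- one with $+$ over $0\le k\le a$, one with $-$ over $1\le k\le a$ --- so they cancel termwise except for the $k=0$ term, leaving $i(i\sigma)^{a}L(v;e^{i\sigma})$. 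Substituting back into (\ref{eq:ind}) and using the shuffle multiplicativity of $L$ with $L(e_{1}^{\shuffle r_{n}};z)=(\Li_{1}(z))^{r_{n}}$ to write $L(v;e^{i\sigma})=(\Li_{1}(e^{i\sigma}))^{r_{n}}L(w_{\mathbf{j}_{-}}^{\mathbf{r}''_{-}};e^{i\sigma})$, one obtains
\[
\frac{d}{d\sigma}f_{\mathbf{q}}^{\mathbf{r}}(\sigma)=i(\Li_{1}(e^{i\sigma}))^{r_{n}}\,B_{\mathbf{q}'}\sum_{\mathbf{j}_{-}\preceq\overline{\mathbf{q}}_{-}}C_{\overline{\mathbf{q}}_{-}}^{\mathbf{j}_{-}}(i\sigma)^{|\overline{\mathbf{q}}|-|\mathbf{j}_{-}|}L(w_{\mathbf{j}_{-}}^{\mathbf{r}''_{-}};e^{i\sigma}).
\]

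It then remains to identify the sum here with $(i\sigma)^{q_{n}}f_{\mathbf{q}_{-}}^{\mathbf{r}_{-}}(\sigma)$. Since $|\overline{\mathbf{q}}|=|\mathbf{q}|+n'=q_{n}+|\mathbf{q}_{-}|+n'$ one can pull $(i\sigma)^{q_{n}}$ out of $(i\sigma)^{|\overline{\mathbf{q}}|-|\mathbf{j}_{-}|}$; for $n''\ge2$ the leftover sum is precisely $f_{\mathbf{q}_{-}}^{\mathbf{r}_{-}}(\sigma)$ in the form of Definition~\ref{def:f}, once one records the bookkeeping identities $(\mathbf{q}_{-})'=\mathbf{q}'$, $(\mathbf{r}_{-})''=\mathbf{r}''_{-}$, $\overline{\mathbf{q}_{-}}=\overline{\mathbf{q}}_{-}$ and the fact that the number of leading zeros of $\mathbf{r}_{-}$ is again $n'$. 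The one place that genuinely needs care is the subcase $n''=1$, in which $\mathbf{r}_{-}=(\{0\}^{n-1})$ has left the present branch: the sum over $\mathbf{j}_{-}$ collapses to the single term $\mathbf{j}_{-}=\emptyset$, equal to $B_{\mathbf{q}'}(i\sigma)^{|\mathbf{q}'|+n'+q_{n}}$, and one checks that this equals $(i\sigma)^{q_{n}}f_{\mathbf{q}_{-}}^{\mathbf{r}_{-}}(\sigma)$ through the $\mathbf{r}=(\{0\}^{n})$ branch of (\ref{eq:f(sigma)}) (or the empty branch if $n=1$). Assembling the two branches and the two subcases finishes the proof; beyond the telescoping above, every step is a routine verification of the index bookkeeping.
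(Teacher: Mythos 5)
Your proposal is correct and follows essentially the same route as the paper: the trivial branch $\mathbf{r}=(\{0\}^{n})$ handled via $B_{\mathbf{q}}=B_{\mathbf{q}_{-}}/(|\mathbf{q}|+n)$, then differentiation of the expansion (\ref{eq:ind}) with the telescoping cancellation leaving $i(i\sigma)^{|\overline{\mathbf{q}}|-|\mathbf{j}_{-}|}L(w_{\mathbf{j}_{-}}^{\mathbf{r}''_{-}}\shuffle e_{1}^{\shuffle r_{n}};e^{i\sigma})$, and finally the case split $n''\ge2$ versus $n''=1$ to identify the result with $i(i\sigma)^{q_{n}}(\Li_{1}(e^{i\sigma}))^{r_{n}}f_{\mathbf{q}_{-}}^{\mathbf{r}_{-}}(\sigma)$. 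The only additions beyond the paper's argument are the (welcome) explicit checks that $ve_{0}^{k}\in\mathfrak{H}^{1}$ and the index bookkeeping $(\mathbf{q}_{-})'=\mathbf{q}'$, $\overline{\mathbf{q}_{-}}=\overline{\mathbf{q}}_{-}$, which the paper leaves implicit.
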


\begin{proof}
In the case $\mathbf{r}=(\{0\}^{n})$, $n\ge 1$, we have
\begin{align*}
\frac{d}{d\sigma}f_{\mathbf{q}}^{\mathbf{r}}(\sigma)&=\frac{d}{d\sigma}B_{\mathbf{q}}(i\sigma)^{|\mathbf{q}|+n}\\
&=B_{\mathbf{q}_{-}}i(i\sigma)^{|\mathbf{q}|+n-1}\\
&=i(i\sigma)^{q_{n}}B_{\mathbf{q}_{-}}(i\sigma)^{|\mathbf{q}_{-}|+n-1}\\
&=i(i\sigma)^{q_{n}}f_{\mathbf{q}_{-}}^{\mathbf{r}_{-}}(\sigma).
\end{align*}
In the case ``otherwise'', that is $n''\ge1$, by equation (\ref{eq:ind}) we obtain
\begin{align*}
\frac{d}{d\sigma}f_{\mathbf{q}}^{\mathbf{r}}(\sigma) 
&=B_{\mathbf{q}'}\sum_{\mathbf{j}_{-} \preceq \overline{\mathbf{q}}_{-}}C_{\overline{\mathbf{q}}_{-}}^{\mathbf{j}_{-}}\sum_{j_{n''}=0}^{|\overline{\mathbf{q}}|-|\mathbf{j}_{-}|}(-1)^{j_{n''}}\frac{(|\overline{\mathbf{q}}|-|\mathbf{j}_{-}|)!}{(|\overline{\mathbf{q}}|-|\mathbf{j}|)!}\\
&\quad\times\frac{d}{d\sigma}(i\sigma)^{|\overline{\mathbf{q}}|-|\mathbf{j}|}L\left((w_{\mathbf{j}_{-}}^{\mathbf{r}''_{-}} \shuffle e_{1}^{\shuffle r_{n}})e_{0}^{1+j_{n''}};e^{i\sigma}\right).
\end{align*}
Here,
\begin{align*}
&\sum_{j_{n''}=0}^{|\overline{\mathbf{q}}|-|\mathbf{j}_{-}|}(-1)^{j_{n''}}\frac{(|\overline{\mathbf{q}}|-|\mathbf{j}_{-}|)!}{(|\overline{\mathbf{q}}|-|\mathbf{j}|)!}\frac{d}{d\sigma} (i\sigma)^{|\overline{\mathbf{q}}|-|\mathbf{j}|}L\left((w_{\mathbf{j}_{-}}^{\mathbf{r}''_{-}} \shuffle e_{1}^{\shuffle r_{n}})e_{0}^{1+j_{n''}};e^{i\sigma}\right)\\
&=\sum_{j_{n''}=0}^{|\overline{\mathbf{q}}|-|\mathbf{j}_{-}|-1}(-1)^{j_{n''}}\frac{(|\overline{\mathbf{q}}|-|\mathbf{j}_{-}|)!}{(|\overline{\mathbf{q}}|-|\mathbf{j}|)!}\frac{d}{d\sigma}(i\sigma)^{|\overline{\mathbf{q}}|-|\mathbf{j}|}L\left((w_{\mathbf{j}_{-}}^{\mathbf{r}''_{-}} \shuffle e_{1}^{\shuffle r_{n}})e_{0}^{1+j_{n''}};e^{i\sigma}\right)\\
&\quad+(-1)^{|\overline{\mathbf{q}}|-|\mathbf{j}_{-}|}(|\overline{\mathbf{q}}|-|\mathbf{j}_{-}|)!\frac{d}{d\sigma}L\left((w_{\mathbf{j}_{-}}^{\mathbf{r}''_{-}} \shuffle e_{1}^{\shuffle r_{n}})e_{0}^{1+|\overline{\mathbf{q}}|-|\mathbf{j}_{-}|};e^{i\sigma}\right)\nonumber\\
&=\sum_{j_{n''}=0}^{|\mathbf{q}|-|\mathbf{j}_{-}|-1}\left((-1)^{j_{n''}}\frac{(|\overline{\mathbf{q}}|-|\mathbf{j}_{-}|)!}{(|\overline{\mathbf{q}}|-|\mathbf{j}|-1)!} i(i\sigma)^{|\overline{\mathbf{q}}|-|\mathbf{j}|-1}L\left((w_{\mathbf{j}_{-}}^{\mathbf{r}''_{-}} \shuffle e_{1}^{\shuffle r_{n}})e_{0}^{1+j_{n''}};e^{i\sigma}\right)\right.\\
&\quad+\left.(-1)^{j_{n''}}\frac{(|\overline{\mathbf{q}}|-|\mathbf{j}_{-}|)!}{(|\overline{\mathbf{q}}|-|\mathbf{j}|)!} i(i\sigma)^{|\overline{\mathbf{q}}|-|\mathbf{j}|}L\left((w_{\mathbf{j}_{-}}^{\mathbf{r}''_{-}} \shuffle e_{1}^{\shuffle r_{n}})e_{0}^{j_{n''}};e^{i\sigma}\right)\right)\\
&\quad+(-1)^{|\overline{\mathbf{q}}|-|\mathbf{j}_{-}|}(|\overline{\mathbf{q}}|-|\mathbf{j}_{-}|)!i L\left((w_{\mathbf{j}_{-}}^{\mathbf{r}''_{-}} \shuffle e_{1}^{\shuffle r_{n}})e_{0}^{|\overline{\mathbf{q}}|-|\mathbf{j}_{-}|};e^{i\sigma}\right)\\
&=i(i\sigma)^{|\overline{\mathbf{q}}|-|\mathbf{j}_{-}|}L\left(w_{\mathbf{j}_{-}}^{\mathbf{r}''_{-}} \shuffle e_{1}^{\shuffle r_{n}};e^{i\sigma}\right).
\end{align*}
Therefore, we obtain
\begin{align*}
\frac{d}{d\sigma}f_{\mathbf{q}}^{\mathbf{r}}(\sigma) 
&=B_{\mathbf{q}'}\sum_{\mathbf{j}_{-} \preceq \overline{\mathbf{q}}_{-}}C_{\overline{\mathbf{q}}_{-}}^{\mathbf{j}_{-}}i(i\sigma)^{|\overline{\mathbf{q}}|-|\mathbf{j}_{-}|}L\left(w_{\mathbf{j}_{-}}^{\mathbf{r}''_{-}} \shuffle e_{1}^{\shuffle r_{n}};e^{i\sigma}\right).
\end{align*}
If $n'' \ge 2$, then we have 
\begin{align*}
\frac{d}{d\sigma}f_{\mathbf{q}}^{\mathbf{r}}(\sigma) 
&=i(i\sigma)^{q_{n}}\left(\Li_{1}(e^{i\sigma})\right)^{r_{n}}B_{\mathbf{q}'}\sum_{\mathbf{j}_{-} \preceq \overline{\mathbf{q}}_{-}}C_{\overline{\mathbf{q}}_{-}}^{\mathbf{j}_{-}}(i\sigma)^{|\overline{\mathbf{q}}_{-}|-|\mathbf{j}_{-}|}L\left(w_{\mathbf{j}_{-}}^{\mathbf{r}''_{-}};e^{i\sigma}\right)\\
&=i(i\sigma)^{q_{n}}\left(\Li_{1}(e^{i\sigma})\right)^{r_{n}}f_{\mathbf{q}_{-}}^{\mathbf{r}_{-}}(\sigma).
\end{align*}
In the case $n'' = 1$, since $\overline{\mathbf{q}}=(|\mathbf{q}'|+n'+q_{n})$, $\mathbf{q}'=\mathbf{q}_{-}$ and $\mathbf{r}'=\mathbf{r}_{-}$, we have
\begin{align*}
\frac{d}{d\sigma}f_{\mathbf{q}}^{\mathbf{r}}(\sigma) 
&=B_{\mathbf{q}'}i(i\sigma)^{|\mathbf{q}'|+n'+q_{n}}\left(\Li_{1}(e^{i\sigma})\right)^{r_{n}}\\
&=i(i\sigma)^{q_{n}}\left(\Li_{1}(e^{i\sigma})\right)^{r_{n}}B_{\mathbf{q}'}(i\sigma)^{|\mathbf{q}'|+n'}\\
&=i(i\sigma)^{q_{n}}\left(\Li_{1}(e^{i\sigma})\right)^{r_{n}}f_{\mathbf{q}_{-}}^{\mathbf{r}_{-}}(\sigma).\\
\end{align*}
Therefore, in all cases, this proposition holds.
\end{proof}
\begin{definition}\label{def:F}
For $\sigma \in [0,2\pi]$, $\mathbf{q} \in (\mathbb{Z}_{\ge 0})^{n}$ and $\mathbf{r} \in (\mathbb{Z}_{\ge 0})^{n}$, we define
\begin{align*}F_{\mathbf{q}}^{\mathbf{r}}(\sigma) =\sum_{\substack{(\mathbf{q}^{(1)},\dots,\mathbf{q}^{(h)})=\mathbf{q} \\ (\mathbf{r}^{(1)},\dots,\mathbf{r}^{(h)})=\mathbf{r}\\ {\rm dep}(\mathbf{q}^{(j)})={\rm dep}(\mathbf{r}^{(j)}) \ge 1 \\ (1 \le j \le h)}}(-1)^{h-1}\left(\prod_{j=1}^{h-1}f_{\mathbf{q}^{(j)}}^{\mathbf{r}^{(j)}}(0)\right)\left(f_{\mathbf{q}^{(h)}}^{\mathbf{r}^{(h)}}(\sigma)-f_{\mathbf{q}^{(h)}}^{\mathbf{r}^{(h)}}(0)\right),
\end{align*}
where the sum is over all partitions of $\mathbf{q}$ and $\mathbf{r}$, for example
\begin{align*}
&F_{(q_{1},q_{2},q_{3})}^{(r_{1},r_{2},r_{3})}(\sigma) \\
&=f_{(q_{1})}^{(r_{1})}(0)f_{(q_{2})}^{(r_{2})}(0)\left(f_{(q_{3})}^{(r_{3})}(\sigma)-f_{(q_{3})}^{(r_{3})}(0)\right)\\
&\quad-f_{(q_{1},q_{2})}^{(r_{1},r_{2})}(0)\left(f_{(q_{3})}^{(r_{3})}(\sigma)-f_{(q_{3})}^{(r_{3})}(0)\right)-f_{(q_{1})}^{(r_{1})}(0)\left(f_{(q_{2},q_{3})}^{(r_{2},r_{3})}(\sigma)-f_{(q_{2},q_{3})}^{(r_{2},r_{3})}(0)\right)\\
&\quad+\left(f_{(q_{1},q_{2},q_{3})}^{(r_{1},r_{2},r_{3})}(\sigma)-f_{(q_{1},q_{2},q_{3})}^{(r_{1},r_{2},r_{3})}(0)\right).
\end{align*}
We understand $F_{\emptyset}^{\emptyset}(\sigma)=1$.
\end{definition}
\begin{remark}
By (\ref{eq:f(sigma)}) and (\ref{eq:f(0)}), we can see that the function  $F_{\mathbf{q}+\mathbf{l}}^{\mathbf{r}}(\sigma)$ is written as a $\mathbb{Q}(i)$-linear combination of products of a power of $\sigma$, multiple zeta values and a multiple polylogarithm with admissible index at $e^{i\sigma}$. 
\end{remark}
\begin{proposition}\label{prop:difF}
For $\mathbf{q}, \mathbf{r}\in(\mathbb{Z}_{\ge0})^{n}$ with $n\ge1$, we have
\[\frac{d}{d\sigma}F_{\mathbf{q}}^{\mathbf{r}}(\sigma) = i(i\sigma)^{q_{n}}\left(\Li_{1}(e^{i\sigma})\right)^{r_{n}}F_{\mathbf{q}_{-}}^{\mathbf{r}_{-}}(\sigma).\]
\end{proposition}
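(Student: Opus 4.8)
The plan is to differentiate the defining sum of $F_{\mathbf{q}}^{\mathbf{r}}(\sigma)$ from Definition \ref{def:F} term by term, and then to reorganize the result into the defining sum of $F_{\mathbf{q}_{-}}^{\mathbf{r}_{-}}(\sigma)$. In Definition \ref{def:F} each summand, indexed by a partition $\bigl((\mathbf{q}^{(1)},\dots,\mathbf{q}^{(h)}),(\mathbf{r}^{(1)},\dots,\mathbf{r}^{(h)})\bigr)$ of $(\mathbf{q},\mathbf{r})$ into parts of depth $\ge1$, is a product in which only the final factor $f_{\mathbf{q}^{(h)}}^{\mathbf{r}^{(h)}}(\sigma)-f_{\mathbf{q}^{(h)}}^{\mathbf{r}^{(h)}}(0)$ depends on $\sigma$, the other factors $f_{\mathbf{q}^{(j)}}^{\mathbf{r}^{(j)}}(0)$ being constants. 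Hence $\tfrac{d}{d\sigma}F_{\mathbf{q}}^{\mathbf{r}}(\sigma)$ is obtained by replacing that factor with $\tfrac{d}{d\sigma}f_{\mathbf{q}^{(h)}}^{\mathbf{r}^{(h)}}(\sigma)$. Since every part has depth $\ge1$ and the last entries of $\mathbf{q}^{(h)}$ and $\mathbf{r}^{(h)}$ are $q_n$ and $r_n$, Proposition \ref{pr:diff} gives $\tfrac{d}{d\sigma}f_{\mathbf{q}^{(h)}}^{\mathbf{r}^{(h)}}(\sigma)=i(i\sigma)^{q_n}\left(\Li_{1}(e^{i\sigma})\right)^{r_n}f_{\mathbf{q}^{(h)}_{-}}^{\mathbf{r}^{(h)}_{-}}(\sigma)$. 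Pulling out the common factor $i(i\sigma)^{q_n}\left(\Li_{1}(e^{i\sigma})\right)^{r_n}$, it remains to prove the purely algebraic identity
\[
\sum(-1)^{h-1}\left(\prod_{j=1}^{h-1}f_{\mathbf{q}^{(j)}}^{\mathbf{r}^{(j)}}(0)\right)f_{\mathbf{q}^{(h)}_{-}}^{\mathbf{r}^{(h)}_{-}}(\sigma)=F_{\mathbf{q}_{-}}^{\mathbf{r}_{-}}(\sigma),
\]
where the sum on the left is over all partitions of $(\mathbf{q},\mathbf{r})$ in the sense of Definition \ref{def:F}.

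To prove this identity I would group the partitions on the left according to the partition of $(\mathbf{q}_{-},\mathbf{r}_{-})$ they induce after deleting the last entry $(q_n,r_n)$ from the final part. If ${\rm dep}(\mathbf{q}^{(h)})\ge2$, the final part becomes the nonempty index $\mathbf{q}^{(h)}_{-}$ and we obtain a partition of $(\mathbf{q}_{-},\mathbf{r}_{-})$ with $h$ parts; if ${\rm dep}(\mathbf{q}^{(h)})=1$, then $\mathbf{q}^{(h)}=(q_n)$, $\mathbf{r}^{(h)}=(r_n)$, $f_{\mathbf{q}^{(h)}_{-}}^{\mathbf{r}^{(h)}_{-}}(\sigma)=f_{\emptyset}^{\emptyset}(\sigma)=1$, and dropping this now-empty part leaves a partition of $(\mathbf{q}_{-},\mathbf{r}_{-})$ with $h-1$ parts. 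Conversely, a given partition $\bigl((\tilde{\mathbf{q}}^{(1)},\dots,\tilde{\mathbf{q}}^{(h')}),(\tilde{\mathbf{r}}^{(1)},\dots,\tilde{\mathbf{r}}^{(h')})\bigr)$ of $(\mathbf{q}_{-},\mathbf{r}_{-})$ has exactly two preimages, namely $\bigl(\tilde{\mathbf{q}}^{(1)},\dots,\tilde{\mathbf{q}}^{(h'-1)},(\tilde{\mathbf{q}}^{(h')},q_n)\bigr)$ and $\bigl(\tilde{\mathbf{q}}^{(1)},\dots,\tilde{\mathbf{q}}^{(h')},(q_n)\bigr)$ (with their $\mathbf{r}$-counterparts), contributing
$(-1)^{h'-1}\left(\prod_{j=1}^{h'-1}f_{\tilde{\mathbf{q}}^{(j)}}^{\tilde{\mathbf{r}}^{(j)}}(0)\right)f_{\tilde{\mathbf{q}}^{(h')}}^{\tilde{\mathbf{r}}^{(h')}}(\sigma)$ and $(-1)^{h'}\left(\prod_{j=1}^{h'}f_{\tilde{\mathbf{q}}^{(j)}}^{\tilde{\mathbf{r}}^{(j)}}(0)\right)$, respectively. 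Their sum is $(-1)^{h'-1}\left(\prod_{j=1}^{h'-1}f_{\tilde{\mathbf{q}}^{(j)}}^{\tilde{\mathbf{r}}^{(j)}}(0)\right)\left(f_{\tilde{\mathbf{q}}^{(h')}}^{\tilde{\mathbf{r}}^{(h')}}(\sigma)-f_{\tilde{\mathbf{q}}^{(h')}}^{\tilde{\mathbf{r}}^{(h')}}(0)\right)$, which is exactly the summand of $F_{\mathbf{q}_{-}}^{\mathbf{r}_{-}}(\sigma)$ indexed by that partition; summing over all partitions of $(\mathbf{q}_{-},\mathbf{r}_{-})$ yields the identity.

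Finally I would dispose of the degenerate case $n=1$ separately, where $\mathbf{q}_{-}=\mathbf{r}_{-}=\emptyset$: the only partition of $(\mathbf{q},\mathbf{r})=((q_1),(r_1))$ is the trivial one ($h=1$), whose contribution on the left is $f_{\emptyset}^{\emptyset}(\sigma)=1=F_{\emptyset}^{\emptyset}(\sigma)$, so the identity, and hence the proposition, holds there too. The only real obstacle is the combinatorial bookkeeping in this two-to-one grouping — keeping the sign $(-1)^{h-1}$, the prefix product of the $f_{\mathbf{q}^{(j)}}^{\mathbf{r}^{(j)}}(0)$'s, and the distinction between enlarging the last part by appending $q_n$ versus adjoining a brand-new last part $(q_n)$ straight — while all the analytic input is already contained in Proposition \ref{pr:diff}.
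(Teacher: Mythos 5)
Your proposal is correct and follows essentially the same route as the paper's proof: differentiate termwise, apply Proposition \ref{pr:diff} to the last factor, and then split the partitions according to whether the last part has depth $\ge 2$ (absorbing $q_n$ into it) or depth $1$ (a standalone part $(q_n)$), which is exactly your two-to-one grouping viewed in the other direction. The sign and bookkeeping details you describe match the paper's computation, including the separate treatment of $n=1$.
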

\begin{proof}
In the case $n=1$, by Proposition \ref{pr:diff}, we have
\[\frac{d}{d\sigma}F_{(q_{1})}^{(r_{1})}(\sigma) =\frac{d}{d\sigma}\left(f_{(q_{1})}^{(r_{1})}(\sigma)-f_{(q_{1})}^{(r_{1})}(0)\right)= i(i\sigma)^{q_{1}}\left(\Li_{1}(e^{i\sigma})\right)^{r_{1}}.\]
In the case $n \ge 2$, we have
\begin{align*}
&\frac{d}{d\sigma}F_{\mathbf{q}}^{\mathbf{r}}(\sigma) \\
&=i(i\sigma)^{q_{n}}\left(\Li_{1}(e^{i\sigma})\right)^{r_{n}}\sum_{\substack{(\mathbf{q}^{(1)},\dots,\mathbf{q}^{(h)})=\mathbf{q} \\ (\mathbf{r}^{(1)},\dots,\mathbf{r}^{(h)})=\mathbf{r} \\{\rm dep}(\mathbf{q}^{(j)})={\rm dep}(\mathbf{r}^{(j)})\ge1 \\ (1 \le j \le h)}}(-1)^{h-1}\left(\prod_{j=1}^{h-1}f_{\mathbf{q}^{(j)}}^{\mathbf{r}^{(j)}}(0)\right)f_{(\mathbf{q}^{(h)})_{-}}^{(\mathbf{r}^{(h)})_{-}}(\sigma)\\
&=i(i\sigma)^{q_{n}}\left(\Li_{1}(e^{i\sigma})\right)^{r_{n}}\Biggl(\sum_{\substack{(\mathbf{q}^{(1)},\dots,\mathbf{q}^{(h)})=\mathbf{q} \\ (\mathbf{r}^{(1)},\dots,\mathbf{r}^{(h)})=\mathbf{r}\\{\rm dep}(\mathbf{q}^{(j)})={\rm dep}(\mathbf{r}^{(j)})\ge1 \\ (1 \le j \le h)\\ {\rm dep}(\mathbf{q}^{(h)})={\rm dep}(\mathbf{r}^{(h)}) \ge 2 }}+\sum_{\substack{(\mathbf{q}^{(1)},\dots,\mathbf{q}^{(h)})=\mathbf{q} \\ (\mathbf{r}^{(1)},\dots,\mathbf{r}^{(h)})=\mathbf{r}\\ {\rm dep}(\mathbf{q}^{(j)})={\rm dep}(\mathbf{r}^{(j)})\ge1 \\ (1 \le j \le h)\\ {\rm dep}(\mathbf{q}^{(h)})={\rm dep}(\mathbf{r}^{(h)}) = 1}}\Biggr)\\
&\qquad(-1)^{h-1}\left(\prod_{j=1}^{h-1}f_{\mathbf{q}^{(j)}}^{\mathbf{r}^{(j)}}(0)\right)f_{(\mathbf{q}^{(h)})_{-}}^{(\mathbf{r}^{(h)})_{-}}(\sigma)\\
&=i(i\sigma)^{q_{n}}\left(\Li_{1}(e^{i\sigma})\right)^{r_{n}}\sum_{\substack{(\mathbf{q}^{(1)},\dots,\mathbf{q}^{(h)})=\mathbf{q}_{-} \\ (\mathbf{r}^{(1)},\dots,\mathbf{r}^{(h)})=\mathbf{r}_{-}\\{\rm dep}(\mathbf{q}^{(j)})={\rm dep}(\mathbf{r}^{(j)}) \ge 1 \\ (1 \le j \le h)}}(-1)^{h-1}\left(\prod_{j=1}^{h-1}f_{\mathbf{q}^{(j)}}^{\mathbf{r}^{(j)}}(0)\right)f_{\mathbf{q}^{(h)}}^{\mathbf{r}^{(h)}}(\sigma)\\
&\quad+i(i\sigma)^{q_{n}}\left(\Li_{1}(e^{i\sigma})\right)^{r_{n}}\sum_{\substack{(\mathbf{q}^{(1)},\dots,\mathbf{q}^{(h-1)})=\mathbf{q}_{-} \\ (\mathbf{r}^{(1)},\dots,\mathbf{r}^{(h-1)})=\mathbf{r}_{-}\\{\rm dep}(\mathbf{q}^{(j)})={\rm dep}(\mathbf{r}^{(j)})\ge1 \\ (1 \le j \le h-1)}}(-1)^{h-1}\left(\prod_{j=1}^{h-1}f_{\mathbf{q}^{(j)}}^{\mathbf{r}^{(j)}}(0)\right).
\end{align*}
Because
\begin{align*}
&\sum_{\substack{(\mathbf{q}^{(1)},\dots,\mathbf{q}^{(h-1)})=\mathbf{q}_{-} \\ (\mathbf{r}^{(1)},\dots,\mathbf{r}^{(h-1)})=\mathbf{r}_{-}\\{\rm dep}(\mathbf{q}^{(j)})={\rm dep}(\mathbf{r}^{(j)})\ge1 \\ (1 \le j \le h-1)}}(-1)^{h-1}\left(\prod_{j=1}^{h-1}f_{\mathbf{q}^{(j)}}^{\mathbf{r}^{(j)}}(0)\right)\\&=-\sum_{\substack{(\mathbf{q}^{(1)},\dots,\mathbf{q}^{(h)})=\mathbf{q}_{-} \\ (\mathbf{r}^{(1)},\dots,\mathbf{r}^{(h)})=\mathbf{r}_{-}\\{\rm dep}(\mathbf{q}^{(j)})={\rm dep}(\mathbf{r}^{(j)}) \ge1 \\ (1 \le j \le h)}}(-1)^{h-1}\left(\prod_{j=1}^{h-1}f_{\mathbf{q}^{(j)}}^{\mathbf{r}^{(j)}}(0)\right)f_{\mathbf{q}^{(h)}}^{\mathbf{r}^{(h)}}(0),
\end{align*}
we obtain the desired formula.
\end{proof}
\begin{proposition}\label{pr:ILitoF}
We have
\[{\rm ILi}_{\mathbf{q}}^{\mathbf{r}}(\sigma)= F_{\mathbf{q}}^{\mathbf{r}}(\sigma).\]
\end{proposition}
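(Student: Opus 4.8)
The plan is to induct on $n = {\rm dep}(\mathbf{q}) = {\rm dep}(\mathbf{r})$, exploiting that both sides satisfy the same first-order integral recursion with the same value at $\sigma = 0$. For the base case $n = 0$ there is nothing to prove, since ${\rm ILi}_{\emptyset}^{\emptyset}(\sigma) = 1 = F_{\emptyset}^{\emptyset}(\sigma)$ by the respective conventions. So assume $n \ge 1$ and that ${\rm ILi}_{\mathbf{q}_{-}}^{\mathbf{r}_{-}}(\sigma) = F_{\mathbf{q}_{-}}^{\mathbf{r}_{-}}(\sigma)$ holds for all $\sigma \in [0,2\pi]$ (the inductive hypothesis, applied to the depth-$(n-1)$ indices $\mathbf{q}_{-}, \mathbf{r}_{-}$).

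First I would differentiate both sides. From the recursive form of ${\rm ILi}_{\mathbf{q}}^{\mathbf{r}}$ and the fundamental theorem of calculus,
\[\frac{d}{d\sigma}{\rm ILi}_{\mathbf{q}}^{\mathbf{r}}(\sigma) = i(i\sigma)^{q_n}\bigl(\Li_{1}(e^{i\sigma})\bigr)^{r_n}\,{\rm ILi}_{\mathbf{q}_{-}}^{\mathbf{r}_{-}}(\sigma) \qquad (\sigma \in (0,2\pi));\]
here one should note that the integrand is continuous on $(0,2\pi)$ and has only a logarithmic singularity at the endpoints, since $\Li_{1}(e^{i\theta}) = -\log(1-e^{i\theta})$, so the singularity is integrable and ${\rm ILi}_{\mathbf{q}}^{\mathbf{r}}$ is well-defined and continuous on $[0,2\pi]$. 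On the other side, Proposition \ref{prop:difF} gives
\[\frac{d}{d\sigma}F_{\mathbf{q}}^{\mathbf{r}}(\sigma) = i(i\sigma)^{q_n}\bigl(\Li_{1}(e^{i\sigma})\bigr)^{r_n}\,F_{\mathbf{q}_{-}}^{\mathbf{r}_{-}}(\sigma),\]
and $F_{\mathbf{q}}^{\mathbf{r}}$ is continuous on $[0,2\pi]$ because, by (\ref{eq:f(sigma)}), every $L(w;e^{i\sigma})$ entering $f_{\mathbf{q}}^{\mathbf{r}}$ is a multiple polylogarithm with admissible index, hence continuous up to $e^{i\sigma} = 1$, while the accompanying powers $(i\sigma)^{m}$ have exponents $m \ge 0$. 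By the inductive hypothesis the two right-hand sides coincide, so the difference ${\rm ILi}_{\mathbf{q}}^{\mathbf{r}}(\sigma) - F_{\mathbf{q}}^{\mathbf{r}}(\sigma)$ has zero derivative on $(0,2\pi)$.

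It then remains to match the constant at $\sigma = 0$. Since ${\rm ILi}_{\mathbf{q}}^{\mathbf{r}}$ is an iterated integral based at $0$ with $n \ge 1$, we have ${\rm ILi}_{\mathbf{q}}^{\mathbf{r}}(0) = 0$; and in Definition \ref{def:F} every summand carries the factor $f_{\mathbf{q}^{(h)}}^{\mathbf{r}^{(h)}}(\sigma) - f_{\mathbf{q}^{(h)}}^{\mathbf{r}^{(h)}}(0)$, which vanishes at $\sigma = 0$, so $F_{\mathbf{q}}^{\mathbf{r}}(0) = 0$ as well (the formula in Remark \ref{re:f0} for $f_{\mathbf{q}}^{\mathbf{r}}(0)$ is not even needed here). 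A function that is continuous on $[0,2\pi]$, differentiable with vanishing derivative on $(0,2\pi)$, and equal to $0$ at $\sigma = 0$ is identically zero; hence ${\rm ILi}_{\mathbf{q}}^{\mathbf{r}}(\sigma) = F_{\mathbf{q}}^{\mathbf{r}}(\sigma)$ on all of $[0,2\pi]$, which closes the induction.

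The argument carries no genuine obstacle: the two derivative identities of Propositions \ref{pr:diff} and \ref{prop:difF} do the real work, and the induction merely integrates them up. The only point that deserves an explicit line is the regularity at the endpoints $\sigma \in \{0, 2\pi\}$, where $\Li_{1}(e^{i\sigma})$ blows up — but only logarithmically, so all the integrals defining ${\rm ILi}_{\mathbf{q}}^{\mathbf{r}}$ converge and both ${\rm ILi}_{\mathbf{q}}^{\mathbf{r}}$ and $F_{\mathbf{q}}^{\mathbf{r}}$ extend continuously to the closed interval, which is exactly what the ``equal derivative plus equal value at a point'' argument requires.
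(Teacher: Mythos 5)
Your proposal is correct and follows essentially the same route as the paper: an induction on the depth $n$ in which Proposition \ref{prop:difF} supplies the derivative identity, the inductive hypothesis identifies the two integrands, and the vanishing of $F_{\mathbf{q}}^{\mathbf{r}}$ at $\sigma=0$ (forced by the factor $f_{\mathbf{q}^{(h)}}^{\mathbf{r}^{(h)}}(\sigma)-f_{\mathbf{q}^{(h)}}^{\mathbf{r}^{(h)}}(0)$ in Definition \ref{def:F}) fixes the constant of integration. The only differences are cosmetic --- you start the induction at $n=0$ rather than $n=1$ and you make explicit the endpoint regularity that the paper leaves tacit.
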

Therefore, by Lemma \ref{le:LstoILi} and Proposition \ref{pr:ILitoF}, Theorem \ref{th:evalLs} holds.
\begin{proof}
We prove this proposition by induction on $n$. In the case $n=1$, by Proposition \ref{prop:difF}, we have
\[\frac{d}{d\sigma}F_{(q_{1})}^{(r_{1})}(\sigma) = i(i\sigma)^{q_{1}}\left(\Li_{1}(e^{i\sigma})\right)^{r_{1}}.\]
Therefore, we obtain
\begin{align*}
{\rm ILi}_{(q_{1})}^{(r_{1})}(\sigma) &= \int_{0}^{\sigma}i(i\theta_{1})^{q_{1}}\left(\Li_{1}(e^{i\theta_{1}})\right)^{r_{1}}\,d\theta_{1}\\
&=F_{(q_{1})}^{(r_{1})}(\sigma)-F_{(q_{1})}^{(r_{1})}(0)\\
&=F_{(q_{1})}^{(r_{1})}(\sigma).
\end{align*}
We prove ${\rm ILi}_{\mathbf{q}}^{\mathbf{r}}(\sigma)= F_{\mathbf{q}}^{\mathbf{r}}(\sigma)$ with the assumption ${\rm ILi}_{\mathbf{q}_{-}}^{\mathbf{r}_{-}}(\sigma)= F_{\mathbf{q}_{-}}^{\mathbf{r}_{-}}(\sigma)$.
By Proposition \ref{prop:difF}
\[\frac{d}{d\sigma}F_{\mathbf{q}}^{\mathbf{r}}(\sigma) = i(i\sigma)^{q_{n}}\left(\Li_{1}(e^{i\sigma})\right)^{r_{n}}{\rm ILi}_{\mathbf{q}_{-}}^{\mathbf{r}_{-}}(\sigma),\]
and hence, we obtain 
\begin{align*}
{\rm ILi}_{\mathbf{q}}^{\mathbf{r}}(\sigma) &= \int_{0}^{\sigma}i(i\theta_{n})^{q_{n}}\left(\Li_{1}(e^{i\theta_{n}})\right)^{r_{n}}{\rm ILi}_{\mathbf{q}_{-}}^{\mathbf{r}_{-}}(\theta_{n})\,d\theta_{n}\\
&=F_{\mathbf{q}}^{\mathbf{r}}(\sigma) - F_{\mathbf{q}}^{\mathbf{r}}(0)\\
&=F_{\mathbf{q}}^{\mathbf{r}}(\sigma).
\end{align*}
\end{proof}
\subsection{Examples}
We give some examples of Theorem \ref{th:evalLs}. For $\sigma \in [0, 2\pi]$, we obtain the following evaluations:
\begin{align*}
&{\rm Ls}_{1}^{(0)}(\sigma)=-\sigma,\\
&{\rm Ls}_{2}^{(0)}(\sigma)=-\frac{1}{2} i  \pi \sigma + \frac{1}{4} i  \sigma^{2} + i  \zeta(2) - i  {\rm Li}_{2}(e^{i  \sigma}),\\
&{\rm Ls}_{2}^{(1)}(\sigma)=-\frac{1}{2}  \sigma^{2},\\
&{\rm Ls}_{1 , 1}^{(0 , 0)}(\sigma)=\frac{1}{2}  \sigma^{2},\\
&{\rm Ls}_{3}^{(0)}(\sigma)=\frac{1}{4}  \pi^{2} \sigma - \frac{1}{4}  \pi \sigma^{2} + \frac{1}{12}  \sigma^{3} - \pi \zeta(2) + \pi {\rm Li}_{2}(e^{i  \sigma}) - \sigma {\rm Li}_{2}(e^{i  \sigma})\\
&\qquad\qquad\quad + i  \zeta(3) - i  {\rm Li}_{3}(e^{i  \sigma}) - 2 i  \zeta(1 , 2) + 2 i  {\rm Li}_{1 , 2}(e^{i  \sigma}),\\
&{\rm Ls}_{3}^{(1)}(\sigma)=-\frac{1}{4} i  \pi \sigma^{2} + \frac{1}{6} i  \sigma^{3} - i  \sigma {\rm Li}_{2}(e^{i  \sigma}) - \zeta(3) + {\rm Li}_{3}(e^{i  \sigma}),\\
&{\rm Ls}_{3}^{(2)}(\sigma)=-\frac{1}{3}  \sigma^{3},\\
&{\rm Ls}_{1 , 2}^{(0 , 0)}(\sigma)=\frac{1}{4} i  \pi \sigma^{2} - \frac{1}{6} i  \sigma^{3} + i  \sigma {\rm Li}_{2}(e^{i  \sigma}) + \zeta(3) - {\rm Li}_{3}(e^{i  \sigma}),\\
&{\rm Ls}_{1 , 2}^{(0 , 1)}(\sigma)=\frac{1}{3}  \sigma^{3},\\
&{\rm Ls}_{2 , 1}^{(0 , 0)}(\sigma)=\frac{1}{4} i  \pi \sigma^{2} - \frac{1}{12} i  \sigma^{3} - i  \sigma \zeta(2) - \zeta(3) + {\rm Li}_{3}(e^{i  \sigma}),\\
&{\rm Ls}_{2 , 1}^{(1 , 0)}(\sigma)=\frac{1}{6}  \sigma^{3},\\
&{\rm Ls}_{1 , 1 , 1}^{(0 , 0 , 0)}(\sigma)=-\frac{1}{6}  \sigma^{3}.
\end{align*}
We note that iterated log-sine integrals satisfying $k_{u} -1 - l_{u}=0$ for all $u \in \{1,\dots,n\}$ such as ${\rm Ls}_{1}^{(0)}(\sigma)$, ${\rm Ls}_{2}^{(1)}(\sigma)$ and ${\rm Ls}_{1,1}^{(0,0)}(\sigma)$ can be evaluated by the definition trivially. In the case $\mathbf{k}=(k)$ and $\mathbf{l}=(k-2)$, the log-sine integral $\Ls_{k}^{(k-2)}(\sigma)$ has the following easy expression:
\begin{align*}
\Ls_{k}^{(k-2)}(\sigma)&=\frac{i}{2k}\sigma^{k}-\frac{i\pi}{2(k-1)}\sigma^{k-1}+i^{k-1}(k-2)!\zeta(k)\\
&\quad-\sum_{j=0}^{k-2}i^{j+1}\frac{(k-2)!}{(k-2-j)!}\sigma^{k-2-j}\Li_{j+2}(e^{i\sigma})
\end{align*}
which is equivalent to \cite[(7.51)]{L}.
\section{Evaluation of iterated log-sine integrals at general argument}\label{se:evalLsgeneral}
In this section, we discuss iterated log-sine integrals at general argument and  give a proof of Theorem \ref{th:Lsgeneral}.
\subsection{Proof of Theorem \ref{th:Lsgeneral}}
First, we note the following reflection formula:
\begin{lemma}\label{le:Lsrefrect}
We have
\begin{align*}
\Ls_{\mathbf{k}}^{\mathbf{l}}(\sigma)=(-1)^{|\mathbf{l}|+n}\Ls_{\mathbf{k}}^{\mathbf{l}}(-\sigma).
\end{align*}
\end{lemma}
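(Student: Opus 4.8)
The plan is to unwind the definition of $\Ls_{\mathbf{k}}^{\mathbf{l}}$ and perform the change of variables $\theta_u \mapsto -\theta_u$ in each iterated integral, exploiting the parity of the integrand. Recall that
\[\Ls_{\mathbf{k}}^{\mathbf{l}}(\sigma)=(-1)^{n}\int_{0<\theta_{1}<\dots<\theta_{n}<\sigma}\prod_{u=1}^{n}\theta_u^{l_u}\bigl(A(\theta_{u})\bigr)^{k_u-1-l_u}\,d\theta_1\cdots d\theta_n,\]
where $A(\theta)=\log|2\sin(\theta/2)|$. The first observation is that $A$ is an even function of $\theta$, since $|2\sin(-\theta/2)|=|2\sin(\theta/2)|$; hence $\bigl(A(\theta_u)\bigr)^{k_u-1-l_u}$ is unchanged under $\theta_u\mapsto -\theta_u$, while the monomial factor $\theta_u^{l_u}$ picks up a sign $(-1)^{l_u}$. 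The product over $u$ therefore picks up the sign $(-1)^{l_1+\dots+l_n}=(-1)^{|\mathbf{l}|}$.

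First I would make the substitution $\theta_u = -\phi_u$ for $u=1,\dots,n$ in the integral defining $\Ls_{\mathbf{k}}^{\mathbf{l}}(-\sigma)$. Each $d\theta_u = -d\phi_u$ contributes a factor $(-1)$, giving $(-1)^n$ overall from the differentials; the orientation-reversing nature of the substitution flips the chain of inequalities $0<\theta_1<\dots<\theta_n<-\sigma$ into $0>\phi_1>\dots>\phi_n>\sigma$, equivalently $\sigma<\phi_n<\dots<\phi_1<0$. To recover the standard simplex ordering $0<\phi_1<\dots<\phi_n<\sigma$ (for $\sigma$ in the relevant range, or more carefully keeping signs straight for all $\sigma$) one relabels the variables by reversing their order, $\phi_u \leftrightarrow \phi_{n+1-u}$; since the integrand is a product over $u$ of factors each depending on a single variable, this relabeling is harmless provided the exponents are simultaneously permuted — but in fact the cleanest route is to note that reversing a chain of $n$ nested integrals over a symmetric-enough integrand just restores the simplex without introducing new signs, because the integral $\int_{0<\theta_1<\dots<\theta_n<\sigma}$ of a product $\prod_u g_u(\theta_u)$ is symmetric under relabeling only when the $g_u$ are equal; here they are not, so one must instead directly compare $\int_{\sigma<\phi_n<\dots<\phi_1<0}$ with the standard region via the single substitution and careful bookkeeping of which endpoint is which.

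The safest and most transparent execution, which I would actually write, is to avoid reordering altogether: substitute $\theta_u=-\phi_u$ in $\Ls_{\mathbf{k}}^{\mathbf{l}}(\sigma)$ itself (not in $\Ls_{\mathbf{k}}^{\mathbf{l}}(-\sigma)$), obtaining directly an integral over $0<\phi_1<\dots<\phi_n<-\sigma$ wait — one must be careful about the direction. Concretely: in $\Ls_{\mathbf{k}}^{\mathbf{l}}(-\sigma)=(-1)^n\int_{0}^{-\sigma}\!\int_0^{\theta_n}\!\cdots\!\int_0^{\theta_2}\prod_u \theta_u^{l_u}A(\theta_u)^{k_u-1-l_u}\,d\theta_1\cdots d\theta_n$, set $\theta_u=-\phi_u$; the innermost integral $\int_0^{\theta_2}d\theta_1$ becomes $\int_0^{-\phi_2}(-d\phi_1)=-\int_0^{-\phi_2}d\phi_1$, and iterating, each of the $n$ integrals contributes $-1$ from $d\theta_u=-d\phi_u$ while the limits become $\int_0^{\phi_{u+1}}$ with $\phi_{u+1}$ now playing the role of the upper variable after the outermost limit $-\sigma$ turns into... — the net effect is $(-1)^n$ from the differentials, the nested structure $0<\phi_1<\dots<\phi_n<\sigma$ is exactly restored, $A(-\phi_u)=A(\phi_u)$, and $(-\phi_u)^{l_u}=(-1)^{l_u}\phi_u^{l_u}$, so altogether
\[\Ls_{\mathbf{k}}^{\mathbf{l}}(-\sigma)=(-1)^n(-1)^n(-1)^{|\mathbf{l}|}\int_{0<\phi_1<\dots<\phi_n<\sigma}\prod_u\phi_u^{l_u}A(\phi_u)^{k_u-1-l_u}\,d\phi=(-1)^{|\mathbf{l}|}(-1)^{n}\,(-1)^{-n}\Ls_{\mathbf{k}}^{\mathbf{l}}(\sigma),\]
which after reconciling the leading $(-1)^n$ in the definition gives $\Ls_{\mathbf{k}}^{\mathbf{l}}(-\sigma)=(-1)^{|\mathbf{l}|+n}\Ls_{\mathbf{k}}^{\mathbf{l}}(\sigma)$, equivalently the stated identity. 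The only real obstacle is sign-bookkeeping in the change of variables for the $n$-fold nested integral — in particular keeping track of the interplay between the $(-1)^n$ from the $n$ differentials, the prefactor $(-1)^n$ already present in the definition, and making sure the substitution genuinely restores the increasing simplex rather than its reverse; I would handle this by induction on $n$, peeling off the outermost integral $\int_0^{-\sigma}\Ls_{\mathbf{k}_-}^{\mathbf{l}_-}$-type recursion using the recursive form of the definition, where the base case $n=1$ is the elementary fact that $\int_0^{-\sigma}\theta^{l}A(\theta)^{k-1-l}\,d\theta=(-1)^{l+1}\int_0^\sigma\phi^l A(\phi)^{k-1-l}\,d\phi$.
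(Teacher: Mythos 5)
Your proposal is correct and is exactly the paper's argument: the paper proves this lemma by the single remark that it follows from the change of variables $\theta_u = -\theta_u$, using the evenness of $A$ and the sign $(-1)^{l_u}$ from each monomial, just as you do. Your sign bookkeeping ($(-1)^n$ from the differentials, $(-1)^{|\mathbf{l}|}$ from the monomials, the prefactor $(-1)^n$ cancelling against the integral's own $(-1)^n$) lands on the stated identity, so no further work is needed.
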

This lemma is given by changing of variables $\theta_{u}=-\theta_{u}$. 
By Lemma \ref{le:Lsrefrect}, we only need to prove Theorem \ref{th:Lsgeneral} for $2m\pi+\sigma \ge 0$.

We introduce the following integrals for convenience.
\begin{definition}
For $\rho, \sigma\in\mathbb{R}$, $\mathbf{k} = (k_{1}, \dots, k_{n}) \in \mathbb{N}^{n}$ and $\mathbf{l} = (l_{1},\dots,l_{n}) \in (\mathbb{Z}_{\ge 0})^{n}$, we define
\[\Ls_{\mathbf{k}}^{\mathbf{l}}(\rho; \sigma)=(-1)^{n}\int_{\rho}^{\sigma}\int_{\rho}^{\theta_{n}}\dots\int_{\rho}^{\theta_{2}}\prod_{u=1}^{n}\theta_u^{l_u}A^{k_u-1-l_u}(\theta_{u})\,d\theta_{1}\cdots d\theta_{n}.\]
\end{definition}
Then, the following lemmas hold. 
\begin{lemma}\label{le:decoform}Let $0<\rho<\sigma$, $\mathbf{k} = (k_{1}, \dots, k_{n}) \in \mathbb{N}^{n}$, $\mathbf{l} = (l_{1},\dots,l_{n}) \in (\mathbb{Z}_{\ge 0})^{n}$, and
\begin{align*}
\mathbf{k}_{h} &= (k_{1},\dots,k_{h}),\ \quad\mathbf{k}^{h} = (k_{h+1},\dots,k_{n}),\\
\mathbf{l}_{h} &= (l_{1},\dots,l_{h}),\qquad\mathbf{l}^{h} = (l_{h+1},\dots,l_{n}).
\end{align*}
If $\Ls_{\mathbf{k}}^{\mathbf{l}}(\sigma)$ converges absolutely, then
\begin{align*}
\Ls_{\mathbf{k}}^{\mathbf{l}}(\sigma)=\sum_{h=0}^{n}\Ls_{\mathbf{k}_{h}}^{\mathbf{l}_{h}}(\rho)\Ls_{\mathbf{k}^{h}}^{\mathbf{l}^{h}}(\rho;\sigma).
\end{align*}
\end{lemma}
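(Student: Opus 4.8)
The plan is to prove the identity by induction on the depth $n$, peeling off the innermost (first) integration variable and splitting the domain of the outermost (last) variable at $\rho$. The case $n=0$ is the trivial identity $1=1$, and the case $n=1$ is exactly the additivity of the one-dimensional integral: $\int_0^\sigma = \int_0^\rho + \int_\rho^\sigma$, which after unwinding signs reads $\Ls_{k_1}^{l_1}(\sigma)=\Ls_{k_1}^{l_1}(\rho)+\Ls_{k_1}^{l_1}(\rho;\sigma)$, matching the $h=0,1$ terms of the claimed sum.

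For the inductive step, I would use the recursive structure of the definition, writing
\begin{align*}
\Ls_{\mathbf{k}}^{\mathbf{l}}(\sigma)=-\int_0^\sigma \theta_n^{l_n}A^{k_n-1-l_n}(\theta_n)\,\Ls_{\mathbf{k}_{n-1}}^{\mathbf{l}_{n-1}}(\theta_n)\,d\theta_n,
\end{align*}
which is legitimate because absolute convergence of $\Ls_{\mathbf{k}}^{\mathbf{l}}(\sigma)$ lets us apply Fubini and peel the outer variable. Then I would split $\int_0^\sigma = \int_0^\rho + \int_\rho^\sigma$. The first piece is $-\int_0^\rho \theta_n^{l_n}A^{k_n-1-l_n}(\theta_n)\,\Ls_{\mathbf{k}_{n-1}}^{\mathbf{l}_{n-1}}(\theta_n)\,d\theta_n = \Ls_{\mathbf{k}}^{\mathbf{l}}(\rho) = \Ls_{\mathbf{k}_n}^{\mathbf{l}_n}(\rho)\Ls_{\mathbf{k}^n}^{\mathbf{l}^n}(\rho;\sigma)$ (the $h=n$ term, with the empty factor $\Ls_{\emptyset}^{\emptyset}(\rho;\sigma)=1$). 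For the second piece, on $\theta_n\in[\rho,\sigma]$ I would apply the inductive hypothesis to $\Ls_{\mathbf{k}_{n-1}}^{\mathbf{l}_{n-1}}(\theta_n)$ — but with the splitting point still $\rho$ — obtaining $\Ls_{\mathbf{k}_{n-1}}^{\mathbf{l}_{n-1}}(\theta_n)=\sum_{h=0}^{n-1}\Ls_{\mathbf{k}_h}^{\mathbf{l}_h}(\rho)\Ls_{(k_{h+1},\dots,k_{n-1})}^{(l_{h+1},\dots,l_{n-1})}(\rho;\theta_n)$. Substituting and pulling the $\theta_n$-independent factors $\Ls_{\mathbf{k}_h}^{\mathbf{l}_h}(\rho)$ out of the integral, the remaining integral over $\theta_n\in[\rho,\sigma]$ of $-\theta_n^{l_n}A^{k_n-1-l_n}(\theta_n)\,\Ls_{(k_{h+1},\dots,k_{n-1})}^{(l_{h+1},\dots,l_{n-1})}(\rho;\theta_n)\,d\theta_n$ is, by the recursive definition of $\Ls^{\cdot}_{\cdot}(\rho;\cdot)$, exactly $\Ls_{\mathbf{k}^h}^{\mathbf{l}^h}(\rho;\sigma)$. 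Combining the two pieces gives $\sum_{h=0}^{n}\Ls_{\mathbf{k}_h}^{\mathbf{l}_h}(\rho)\Ls_{\mathbf{k}^h}^{\mathbf{l}^h}(\rho;\sigma)$, as required.

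The main point requiring care — the "obstacle" such as it is — is the bookkeeping of the iterated integral and the justification that peeling and splitting are valid: one must check that the inner integral $\Ls_{\mathbf{k}_{n-1}}^{\mathbf{l}_{n-1}}(\theta_n)$ is an integrable function of $\theta_n$ on $[0,\sigma]$ (so that Fubini applies), which follows from the assumed absolute convergence of $\Ls_{\mathbf{k}}^{\mathbf{l}}(\sigma)$, and that the possible singularity of $A(\theta)$ at $\theta=0$ causes no trouble when $0<\rho$, since on $[\rho,\sigma]$ everything is continuous. The index manipulations ($\mathbf{k}_h$ versus $\mathbf{k}^h$, shifting by one when the outer variable is peeled off) are routine but need to be tracked precisely so that the term arising from the $h$-th term of the inductive hypothesis lands in the $h$-th term of the target sum and the peeled-off first piece supplies the $h=n$ term.
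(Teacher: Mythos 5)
Your proof is correct and rests on the same idea as the paper's: decomposing the integration simplex according to where $\rho$ falls among the ordered variables $\theta_1<\cdots<\theta_n$, so that each piece factors into an integral over $(0,\rho)$ and one over $(\rho,\sigma)$. The paper performs this decomposition in one step over all $n+1$ regions, while you obtain the same decomposition recursively by peeling the outermost variable and inducting on the depth; the bookkeeping of signs and indices in your version checks out.
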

This decomposition formula is proved by decomposing the domain of the integral \begin{align*}
0<\theta_{1}<\theta_{2}<\cdots<\theta_{n}<\sigma
\end{align*}into
\begin{align*}
0&<\rho<\theta_{1}<\theta_{2}<\cdots<\theta_{n}<\sigma,\\
0&<\theta_{1}<\rho<\theta_{2}<\cdots<\theta_{n}<\sigma,\\
&\qquad\qquad\qquad\vdots\\
0&<\theta_{1}<\theta_{2}<\cdots<\theta_{n}<\rho<\sigma.
\end{align*}
We write $\mathbf{j}\le\mathbf{k}$ when $\mathbf{j} = (j_{1},\dots,j_{n})$ and $\mathbf{k} = (k_{1},\dots,k_{n})$ satisfy $j_{1}\le k_{1},j_{2}\le k_{2},\dots,j_{n}\le k_{n}$,
and define
\begin{align*}
\binom{\mathbf{l}}{\mathbf{j}}=\prod_{u=1}^{n}\binom{l_{u}}{j_{u}}.
\end{align*}
\begin{lemma}\label{le:Lsshift}
For $m \in \mathbb{Z}$, we have
\begin{align*}
\Ls_{\mathbf{k}}^{\mathbf{l}}(2m\pi, 2m\pi+\sigma)=\sum_{\mathbf{j}\le\mathbf{l}}(2m\pi)^{|\mathbf{j}|}\binom{\mathbf{l}}{\mathbf{j}}\Ls_{\mathbf{k}-\mathbf{j}}^{\mathbf{l}-\mathbf{j}}(\sigma),
\end{align*}
where the sum is over all $\mathbf{j}=(j_{1},\dots,j_{n}) \in (\mathbb{Z}_{\ge0})^{n}$ satisfying $\mathbf{j}\le\mathbf{l}$.
\end{lemma}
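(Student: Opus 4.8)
The plan is to reduce the shifted integral $\Ls_{\mathbf{k}}^{\mathbf{l}}(2m\pi, 2m\pi+\sigma)$ to the standard one $\Ls_{\mathbf{k}}^{\mathbf{l}}(\sigma)$ by the translation of variables $\theta_u \mapsto \theta_u + 2m\pi$ in the defining integral, and then to expand the resulting powers of $(\theta_u + 2m\pi)$ by the binomial theorem. Concretely, in
\[
\Ls_{\mathbf{k}}^{\mathbf{l}}(2m\pi, 2m\pi+\sigma)=(-1)^{n}\int_{2m\pi<\theta_1<\cdots<\theta_n<2m\pi+\sigma}\prod_{u=1}^{n}\theta_u^{l_u}A^{k_u-1-l_u}(\theta_u)\,d\theta_1\cdots d\theta_n,
\]
substitute $\theta_u = \phi_u + 2m\pi$. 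The domain becomes $0<\phi_1<\cdots<\phi_n<\sigma$, the Jacobian is $1$, and since $A(\theta)=\log|2\sin(\theta/2)|$ is $2\pi$-periodic we have $A(\phi_u+2m\pi)=A(\phi_u)$, so the factor $A^{k_u-1-l_u}(\theta_u)$ is unchanged. Only the monomial factor changes: $\theta_u^{l_u}=(\phi_u+2m\pi)^{l_u}$.

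Next I would apply the binomial theorem to each factor, $(\phi_u+2m\pi)^{l_u}=\sum_{j_u=0}^{l_u}\binom{l_u}{j_u}(2m\pi)^{j_u}\phi_u^{l_u-j_u}$, and multiply these $n$ sums together, collecting the result indexed by $\mathbf{j}=(j_1,\dots,j_n)$ with $\mathbf{j}\le\mathbf{l}$. This produces
\[
\prod_{u=1}^{n}\theta_u^{l_u}=\sum_{\mathbf{j}\le\mathbf{l}}(2m\pi)^{|\mathbf{j}|}\binom{\mathbf{l}}{\mathbf{j}}\prod_{u=1}^{n}\phi_u^{l_u-j_u}.
\]
Substituting this back, interchanging the finite sum over $\mathbf{j}$ with the integral, and pulling the constants $(2m\pi)^{|\mathbf{j}|}\binom{\mathbf{l}}{\mathbf{j}}$ out, the remaining integral is
\[
(-1)^{n}\int_{0<\phi_1<\cdots<\phi_n<\sigma}\prod_{u=1}^{n}\phi_u^{l_u-j_u}A^{k_u-1-l_u}(\phi_u)\,d\phi_1\cdots d\phi_n,
\]
which is exactly $\Ls_{\mathbf{k}-\mathbf{j}}^{\mathbf{l}-\mathbf{j}}(\sigma)$, because $(k_u-j_u)-1-(l_u-j_u)=k_u-1-l_u$ so the exponent on $A$ matches and the exponent on $\phi_u$ is $l_u-j_u$. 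Summing over $\mathbf{j}\le\mathbf{l}$ gives the claimed formula.

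There is no real obstacle here; the only points requiring a word of care are the $2\pi$-periodicity of $A$ (which is what makes the translation clean) and the interchange of the finite sum and the integral, which is harmless since the sum over $\mathbf{j}$ is finite. One might also note that the formula as stated has $m$ ranging over all of $\mathbb{Z}$, so one should remark that $\Ls_{\mathbf{k}-\mathbf{j}}^{\mathbf{l}-\mathbf{j}}(\sigma)$ is only well-defined (and the integral absolutely convergent) under the standing hypothesis $k_u-1-l_u\ge0$, which is preserved; the argument is otherwise identical for every $m$, positive, negative, or zero.
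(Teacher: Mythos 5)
Your proof is correct. It differs from the paper's only in the choice of substitution: you translate, $\theta_u=\phi_u+2m\pi$, which lands the domain directly on $0<\phi_1<\cdots<\phi_n<\sigma$ and needs only the $2\pi$-periodicity of $A(\theta)=\log|2\sin(\theta/2)|$; the paper instead reflects, $\theta_u\mapsto 2m\pi-\theta_u$, which uses both the periodicity and the evenness of $A$, produces $(-1)^{|\mathbf{l}-\mathbf{j}|+n}\Ls_{\mathbf{k}-\mathbf{j}}^{\mathbf{l}-\mathbf{j}}(-\sigma)$ after the binomial expansion of $(2m\pi-\theta_u)^{l_u}$, and then invokes the reflection formula (Lemma \ref{le:Lsrefrect}) to return to $\Ls_{\mathbf{k}-\mathbf{j}}^{\mathbf{l}-\mathbf{j}}(\sigma)$. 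Your route is the more economical of the two: it bypasses Lemma \ref{le:Lsrefrect} entirely and avoids the bookkeeping of the reversed orientation of the domain, while the paper's choice keeps the reflection lemma (which it needs elsewhere anyway) as the single place where sign conventions for negative arguments are handled. Both arguments otherwise run in parallel (binomial expansion, finite interchange of sum and integral, matching of exponents via $(k_u-j_u)-1-(l_u-j_u)=k_u-1-l_u$), and your closing remarks about convergence and the range of $m$ are apt.
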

\begin{proof}
By changing of variables $\theta_{u}=2m\pi-\theta_{u}$, we have
\begin{align*}
&\Ls_{\mathbf{k}}^{\mathbf{l}}(2m\pi, 2m\pi+\sigma)\\
&=(-1)^{n}\int_{2m\pi}^{2m\pi+\sigma}\int_{2m\pi}^{\theta_{n}}\dots\int_{2m\pi}^{\theta_{2}}\prod_{u=1}^{n}\theta_u^{l_u}A^{k_u-1-l_u}(\theta_{u})\,d\theta_{1}\cdots d\theta_{n}\\
&=(-1)^{n}\int_{0}^{-\sigma}\int_{0}^{\theta_{n}}\dots\int_{0}^{\theta_{2}}
\prod_{u=1}^{n}-\left(2m\pi-\theta_u\right)^{l_u}A^{k_u-1-l_u}(\theta_{u})\,d\theta_{1}\cdots d\theta_{n}\\
&=\int_{0}^{-\sigma}\int_{0}^{\theta_{n}}\dots\int_{0}^{\theta_{2}}
\prod_{u=1}^{n}\sum_{j_{u}=0}^{l_{u}}(2m\pi)^{j_{u}}\binom{l_{u}}{j_{u}}\left(-\theta_u\right)^{l_u-j_{u}}A^{k_u-1-l_u}(\theta_{u})\,d\theta_{1}\cdots d\theta_{n}\\
&=\sum_{\mathbf{j}\le\mathbf{l}}(2m\pi)^{|\mathbf{j}|}\binom{\mathbf{l}}{\mathbf{j}}(-1)^{|\mathbf{l}-\mathbf{j}|+n}\Ls_{\mathbf{k}-\mathbf{j}}^{\mathbf{l}-\mathbf{j}}(-\sigma).
\end{align*}
Here, we use Lemma \ref{le:Lsrefrect} and obtain the desired formula.
\end{proof}
\begin{lemma}\label{le:ls2mpitozeta}
Iterated log-sine integrals $\Ls_{\mathbf{k}}^{\mathbf{l}}(2m\pi)$ satisfying $k_{u} -1 - l_{u} \ge 0$ for all $u \in \{1,\dots,n\}$ can be written as a $\mathbb{Q}$-linear combination of products of a power of $\pi$ and multiple zeta values.
\end{lemma}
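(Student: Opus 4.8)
The plan is to prove the statement by induction on $m$, exploiting that $2\pi$ lies in the range $[0,2\pi]$ of Theorem \ref{th:evalLs} and that $e^{2\pi i}=1$. Throughout, call a real number an \emph{MZV-expression} if it is a $\mathbb{Q}$-linear combination of products of a power of $\pi$ and a multiple zeta value; such numbers form a $\mathbb{Q}$-vector space that is closed under multiplication, since $\pi^{a}\pi^{b}=\pi^{a+b}$ and a product of two multiple zeta values is a $\mathbb{Z}$-linear combination of multiple zeta values (e.g.\ from $L(w_{1};1)L(w_{2};1)=L(w_{1}\shuffle w_{2};1)$ for $w_{1},w_{2}\in\mathfrak{H}^{0}$ together with $\Li_{\mathbf{a}}(1)=\zeta(\mathbf{a})$ for admissible $\mathbf{a}$). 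For $m<0$ the reflection Lemma \ref{le:Lsrefrect} gives $\Ls_{\mathbf{k}}^{\mathbf{l}}(2m\pi)=(-1)^{|\mathbf{l}|+n}\Ls_{\mathbf{k}}^{\mathbf{l}}(-2m\pi)$, so it suffices to treat $m\ge 0$; the case $m=0$ is immediate since $\Ls_{\mathbf{k}}^{\mathbf{l}}(0)=0$ for $n\ge1$ and $\Ls_{\emptyset}^{\emptyset}(0)=1$.

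For the base case $m=1$ I would specialize Theorem \ref{th:evalLs} at $\sigma=2\pi$. By the remark following Definition \ref{def:F}, each $F_{\mathbf{q}+\mathbf{l}}^{\mathbf{r}}(2\pi)$ is a $\mathbb{Q}(i)$-linear combination of products of a power of $2\pi$, a multiple zeta value, and a multiple polylogarithm with admissible index at $e^{2\pi i}=1$; since $\Li_{\mathbf{a}}(1)=\zeta(\mathbf{a})$ for admissible $\mathbf{a}$ and $(2\pi)^{a}=2^{a}\pi^{a}$, this exhibits $F_{\mathbf{q}+\mathbf{l}}^{\mathbf{r}}(2\pi)$ as a $\mathbb{Q}(i)$-linear combination of products of a power of $\pi$ and a multiple zeta value. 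The scalar prefactors of \eqref{eq:main}, after writing $(-i\pi)^{|\mathbf{p}|}=(-i)^{|\mathbf{p}|}\pi^{|\mathbf{p}|}$, are products of an element of $\mathbb{Q}(i)$ and a power of $\pi$, so $\Ls_{\mathbf{k}}^{\mathbf{l}}(2\pi)$ is again such a combination. Finally, since $\Ls_{\mathbf{k}}^{\mathbf{l}}(2\pi)$ is real while powers of $\pi$ and multiple zeta values are real, averaging this identity with its complex conjugate replaces each coefficient by its real part, so $\Ls_{\mathbf{k}}^{\mathbf{l}}(2\pi)$ is an MZV-expression.

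For the inductive step, fix $m\ge 2$ and assume the result at argument $2(m-1)\pi$ for every $\mathbf{k},\mathbf{l}$ with $k_{u}-1-l_{u}\ge0$. I would apply the decomposition Lemma \ref{le:decoform} with $\rho=2\pi$ (legitimate since $0<2\pi<2m\pi$ and $k_{u}-1-l_{u}\ge0$ forces absolute convergence), obtaining
\[\Ls_{\mathbf{k}}^{\mathbf{l}}(2m\pi)=\sum_{h=0}^{n}\Ls_{\mathbf{k}_{h}}^{\mathbf{l}_{h}}(2\pi)\,\Ls_{\mathbf{k}^{h}}^{\mathbf{l}^{h}}(2\pi;2m\pi).\]
Each $\Ls_{\mathbf{k}_{h}}^{\mathbf{l}_{h}}(2\pi)$ is an MZV-expression by the base case. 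For the tail factor I would write $2m\pi=2\pi+2(m-1)\pi$ and apply the shift Lemma \ref{le:Lsshift} with its ``$m$'' equal to $1$ and its ``$\sigma$'' equal to $2(m-1)\pi$, giving
\[\Ls_{\mathbf{k}^{h}}^{\mathbf{l}^{h}}(2\pi;2m\pi)=\sum_{\mathbf{j}\le\mathbf{l}^{h}}(2\pi)^{|\mathbf{j}|}\binom{\mathbf{l}^{h}}{\mathbf{j}}\Ls_{\mathbf{k}^{h}-\mathbf{j}}^{\mathbf{l}^{h}-\mathbf{j}}(2(m-1)\pi),\]
where $(k_{u}-j_{u})-1-(l_{u}-j_{u})=k_{u}-1-l_{u}\ge0$, so each $\Ls_{\mathbf{k}^{h}-\mathbf{j}}^{\mathbf{l}^{h}-\mathbf{j}}(2(m-1)\pi)$ is an MZV-expression by the induction hypothesis and $(2\pi)^{|\mathbf{j}|}$ is a rational multiple of a power of $\pi$. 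Using closure of MZV-expressions under products, every summand, and hence $\Ls_{\mathbf{k}}^{\mathbf{l}}(2m\pi)$, is an MZV-expression, completing the induction.

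I expect the only genuinely substantive point to be the base case $m=1$; everything after that is bookkeeping with Lemmas \ref{le:decoform} and \ref{le:Lsshift}. Within the base case the delicate bit is the passage from $\mathbb{Q}(i)$-coefficients to $\mathbb{Q}$-coefficients: this must be done by taking the real part of the already-established identity (using reality of $\Ls_{\mathbf{k}}^{\mathbf{l}}(2\pi)$), not by invoking any unknown $\mathbb{Q}$-linear independence of the monomials $\pi^{a}\zeta(\mathbf{k})$. A secondary point to verify is that the multiple polylogarithms produced by Theorem \ref{th:evalLs} indeed carry admissible indices, so that their values at $z=1$ are honest multiple zeta values; this is exactly what the remark following Definition \ref{def:F} asserts.
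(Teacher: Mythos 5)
Your proof is correct and follows essentially the same route as the paper's: induction on $m$, with the base case $m=1$ obtained by setting $\sigma=2\pi$ in Theorem \ref{th:evalLs} and using $\Li_{\mathbf{a}}(e^{2\pi i})=\zeta(\mathbf{a})$, and the inductive step via Lemmas \ref{le:decoform} and \ref{le:Lsshift} (you decompose at $\rho=2\pi$ where the paper decomposes at $\rho=2m\pi$, a mirror image of the same argument). Your additional care about passing from $\mathbb{Q}(i)$- to $\mathbb{Q}$-coefficients by taking real parts, and about closure of MZV-expressions under products, fills in details the paper leaves implicit.
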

We note that the assertion for the generalized log-sine integrals $\Ls_{k}^{(l)}(2m\pi)$ was proved by Borwein and Straub \cite[Section 4.1]{BS}.
\begin{proof}
We assume $m\ge1$ and prove this lemma by induction on $m$. In the case  $m=1$, we obtain the assertion by putting $\sigma=2\pi$ in (\ref{eq:main}), since
$\Li_{\mathbf{k}}(e^{2\pi i})=\zeta(\mathbf{k})$ when $k_{n}\ge2$. We assume the assertion for $2m\pi$ and prove for $2(m+1)\pi$. By putting $\rho=2m\pi$ and $\sigma=2(m+1)\pi$ in Lemma \ref{le:decoform}, we obtain
\begin{align*}
\Ls_{\mathbf{k}}^{\mathbf{l}}(2(m+1)\pi)&=\sum_{j=0}^{n}\Ls_{\mathbf{k}_{h}}^{\mathbf{l}_{h}}(2m\pi)\Ls_{\mathbf{k}^{h}}^{\mathbf{l}^{h}}(2m\pi;2(m+1)\pi)\\
&=\sum_{j=0}^{n}\Ls_{\mathbf{k}_{h}}^{\mathbf{l}_{h}}(2m\pi)\sum_{\mathbf{j}\le\mathbf{l}^{h}}\binom{\mathbf{l}^{h}}{\mathbf{j}}(2m\pi)^{|\mathbf{j}|}\Ls_{\mathbf{k}^{h}-\mathbf{j}}^{\mathbf{l}^{h}-\mathbf{j}}(2\pi).
\end{align*}
Here, the second equality is obtained by putting $\sigma=2\pi$ in Lemma \ref{le:Lsshift}. Therefore, the assertion is proved.
\end{proof}
\begin{proof}[Proof of Theorem \ref{th:Lsgeneral}]
Let $\sigma\in[0,2\pi]$ and $m\ge0$. By putting $\rho=2m\pi$ and $\sigma=2m\pi+\sigma$ in Lemma \ref{le:decoform}, we have
\begin{align*}
\Ls_{\mathbf{k}}^{\mathbf{l}}(2m\pi+\sigma)=\sum_{j=0}^{n}\Ls_{\mathbf{k}_{h}}^{\mathbf{l}_{h}}(2m\pi)\Ls_{\mathbf{k}^{h}}^{\mathbf{l}^{h}}(2m\pi;2m\pi+\sigma).
\end{align*}
Here, $\Ls_{\mathbf{k}_{h}}^{\mathbf{l}_{h}}(2m\pi)$ can be written in terms of multiple zeta values by Lemma \ref{le:ls2mpitozeta}, and $\Ls_{\mathbf{k}^{h}}^{\mathbf{l}^{h}}(2m\pi,2m\pi+\sigma)$ can be written in terms of multiple zeta values and multiple polylogarithms with admissible index at $e^{i\sigma}$ by Lemma \ref{le:Lsshift} and Theorem \ref{th:evalLs}. Therefore, Theorem \ref{th:Lsgeneral} is proved.
\end{proof}
\subsection{Examples}
For $\sigma \in [0, 2\pi]$, we have the following evaluations:
\begin{align*}
{\rm Ls}_{1}^{(0)}(2\pi+\sigma)&={\rm Ls}_{1}^{(0)}\left(2\pi\right) + {\rm Ls}_{1}^{(0)}\left(\sigma\right)\\
&=-2\pi-\sigma,\\
{\rm Ls}_{2}^{(0)}(2\pi+\sigma)&={\rm Ls}_{2}^{(0)}\left(2\pi\right) + {\rm Ls}_{2}^{(0)}\left(\sigma\right)\\
&=-\frac{1}{2} i  \pi \sigma + \frac{1}{4} i  \sigma^{2} + i  \zeta\left(2\right) - i  {\rm Li}_{2}\left(e^{i  \sigma}\right),\\
{\rm Ls}_{2}^{(1)}(2\pi+\sigma)
&={\rm Ls}_{2}^{(1)}\left(2\pi\right) + 2\pi {\rm Ls}_{1}^{(0)}\left(\sigma\right) + {\rm Ls}_{2}^{(1)}\left(\sigma\right)\\
&=-2  \pi^{2} - 2  \pi \sigma - \frac{1}{2}  \sigma^{2},\\
{\rm Ls}_{1 , 1}^{(0 , 0)}(2\pi+\sigma)&={\rm Ls}_{1 , 1}^{(0 , 0)}\left(2\pi\right) + {\rm Ls}_{1}^{(0)}\left(2\pi\right) {\rm Ls}_{1}^{(0)}\left(\sigma\right) + {\rm Ls}_{1 , 1}^{(0 , 0)}\left(\sigma\right)\\
&=2  \pi^{2} + 2  \pi \sigma + \frac{1}{2}  \sigma^{2}.
\end{align*}
\section{Conjectures}\label{se:Con}
We introduce and discuss shifted log-sine integrals. Then, we suggest some conjectures on shifted log-sine integrals, multiple zeta values, multiple Clausen values, multiple Glaisher values and iterated log-sine integrals.
\subsection{Shifted log-sine integrals}\label{se:SLs}
Shifted log-sine integrals are defined as a kind of log-sine integrals as follows.
\begin{definition}[Shifted log-sine integrals]
For $\sigma \in \mathbb{R}$, $\mathbf{k} = (k_{1}, \dots, k_{n}) \in (\mathbb{Z}_{\ge 2})^{n}$, we define
\[\SLs(\mathbf{k};\sigma)=\int_{0}^{\sigma}\int_{0}^{\theta_{n}}\dots\int_{0}^{\theta_{2}}\prod_{u=1}^{n}
\left(\theta_{u}-\sigma\right)^{k_{u}-2}A(\theta_{u})\,d\theta_{1}\cdots d\theta_{n}.\]
\end{definition}

Shifted log-sine integrals satisfy the following shuffle product.
\begin{proposition}\label{pr:SLsshuffle}
For $n, m \in \mathbb{N}$ and $k_{1},\dots,k_{n+m} \in \mathbb{Z}_{\ge 2}$, we have
\begin{align*}&\SLs(k_{1},\dots,k_{n};\sigma)\cdot\SLs(k_{n+1},\dots,k_{n+m};\sigma)\\
&=\sum_{\tau \in \mathfrak{S}_{n,m}}\SLs(k_{\tau(1)},\dots,k_{\tau(n+m)};\sigma),
\end{align*}
where $\mathfrak{S}_{n,m}$ be a subset of $\mathfrak{S}_{n+m}$ $($the symmetric group of degree $n+m$$)$,  which is defined by 
\[\mathfrak{S}_{n,m} = \{\tau \in \mathfrak{S}_{n+m} \mid \tau^{-1}(1)<\dots<\tau^{-1}(n)\ and \ \tau^{-1}(n+1)<\dots<\tau^{-1}(n+m)\}.\]
\end{proposition}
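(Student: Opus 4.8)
The plan is to read $\SLs(\mathbf{k};\sigma)$ as an iterated integral of $1$-forms along the interval $[0,\sigma]$ and then to prove the statement by the standard ``decomposition of a product of simplices'' argument that underlies every shuffle product of iterated integrals. For $k\in\mathbb{Z}_{\ge2}$ put $\omega_{k}(\theta)=(\theta-\sigma)^{k-2}A(\theta)\,d\theta$, a $1$-form depending only on $k$ and on the fixed parameter $\sigma$; then, directly from the definition,
\[
\SLs(k_{1},\dots,k_{n};\sigma)=\int_{0<\theta_{1}<\dots<\theta_{n}<\sigma}\prod_{u=1}^{n}\omega_{k_{u}}(\theta_{u}),
\]
with the variable $\theta_{u}$ carrying the form indexed by $k_{u}$. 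Relabelling the integration variables of the first factor as $t_{1}<\dots<t_{n}$ and those of the second as $t_{n+1}<\dots<t_{n+m}$, Fubini's theorem turns the left-hand side of the proposition into the single integral of $\prod_{u=1}^{n+m}\omega_{k_{u}}(t_{u})$ over the product of simplices $\{t_{1}<\dots<t_{n}\}\cap\{t_{n+1}<\dots<t_{n+m}\}$ inside the cube $(0,\sigma)^{n+m}$.

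Next I would decompose that product of simplices. Up to the ``fat diagonal'' $\bigcup_{i\neq j}\{t_{i}=t_{j}\}$, which has Lebesgue measure zero, the cube $(0,\sigma)^{n+m}$ is the disjoint union of the open chambers $C_{\tau}=\{t_{\tau(1)}<\dots<t_{\tau(n+m)}\}$ for $\tau\in\mathfrak{S}_{n+m}$, and $t_{i}<t_{j}$ holds throughout $C_{\tau}$ exactly when $\tau^{-1}(i)<\tau^{-1}(j)$. Hence $C_{\tau}$ is contained in the product of simplices precisely when $\tau^{-1}(1)<\dots<\tau^{-1}(n)$ and $\tau^{-1}(n+1)<\dots<\tau^{-1}(n+m)$, i.e.\ precisely when $\tau\in\mathfrak{S}_{n,m}$; so the product of simplices equals $\bigsqcup_{\tau\in\mathfrak{S}_{n,m}}C_{\tau}$ up to measure zero. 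Integrating over a single chamber and substituting $s_{j}=t_{\tau(j)}$ (so that $t_{u}=s_{\tau^{-1}(u)}$ and $\prod_{u}\omega_{k_{u}}(t_{u})=\prod_{j}\omega_{k_{\tau(j)}}(s_{j})$) identifies that chamber's contribution with $\SLs(k_{\tau(1)},\dots,k_{\tau(n+m)};\sigma)$. Summing over $\tau\in\mathfrak{S}_{n,m}$ yields the asserted identity; the small case $n=m=1$, where the formula reads $\SLs(k_{1};\sigma)\SLs(k_{2};\sigma)=\SLs(k_{1},k_{2};\sigma)+\SLs(k_{2},k_{1};\sigma)$, confirms that the indexing conventions match.

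The one genuine thing to verify — and the part I expect to need care rather than cleverness — is that the $(n+m)$-fold integral of $\bigl|\prod_{u}\omega_{k_{u}}(t_{u})\bigr|$ over $(0,\sigma)^{n+m}$ is finite, so that Fubini and the additivity of the integral over the chamber decomposition are legitimate. This is where the hypothesis $k_{u}\ge2$ enters: the factor $(\theta-\sigma)^{k_{u}-2}$ is then a genuine polynomial, bounded on $[0,\sigma]$, while $A(\theta)=\log|2\sin(\theta/2)|$ has only logarithmic singularities (at the points of $2\pi\mathbb{Z}$ lying in $[0,\sigma]$), which belong to $L^{p}_{\mathrm{loc}}$ for every finite $p$; a product of such functions in separate variables over the bounded region $(0,\sigma)^{n+m}$ is therefore integrable, and absolute convergence follows. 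For $\sigma\le0$ the simplices are empty and both sides vanish, so one may assume $\sigma>0$ (or, equivalently, run the same argument with the oriented segment from $0$ to $\sigma$). Once this bookkeeping is in place, the shuffle structure is forced purely by the combinatorics of interleaving the two totally ordered blocks of variables, and no further input is needed.
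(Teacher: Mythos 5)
Your argument is correct and is exactly the paper's proof: the author disposes of Proposition~\ref{pr:SLsshuffle} with the single remark that it ``is proved by decomposing the domain of the integral,'' which is precisely the chamber decomposition of the product of simplices that you carry out in detail. Your additional care about absolute integrability (logarithmic singularities of $A$ and boundedness of $(\theta-\sigma)^{k_u-2}$ for $k_u\ge 2$) is a welcome elaboration but not a departure from the paper's route.
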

This proposition is proved by decomposing the domain of the integral.

Shifted log-sine integrals can be written in terms of iterated log-sine integrals as follows.
\begin{theorem}\label{th:evalSLs}
Let 
\[\mathbf{2}_{n}=(\{2\}^{n}).\]
For $\sigma \in \mathbb{R}$ and $\mathbf{k} = (k_{1}, \dots, k_{n}) \in (\mathbb{Z}_{\ge 2})^{n}$, we have
\[\SLs(\mathbf{k};\sigma)=(-1)^{n}\sum_{\mathbf{j}\le\mathbf{k}-\mathbf{2}_{n}}\binom{\mathbf{k}-\mathbf{2}_{n}}{\mathbf{j}}(-\sigma)^{|\mathbf{k}-\mathbf{2}_{n}-\mathbf{j}|}\Ls_{\mathbf{j}+\mathbf{2}_{n}}^{\mathbf{j}}(\sigma),\]
where the sum is over all $\mathbf{j} = (j_{1},\dots,j_{n}) \in (\mathbb{Z}_{\ge 0})^{n}$ satisfying $\mathbf{j}\le\mathbf{k}-\mathbf{2}_{n}$.
\end{theorem}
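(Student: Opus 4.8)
The plan is to reduce the statement to a straightforward binomial expansion inside the iterated integral, so the argument is essentially bookkeeping rather than a genuine difficulty. First I would record what the right-hand side means at the level of integrals: for an index $\mathbf{j}\in(\mathbb{Z}_{\ge 0})^{n}$, the pair $(\mathbf{j}+\mathbf{2}_{n},\mathbf{j})$ satisfies $k_{u}-1-l_{u}=(j_{u}+2)-1-j_{u}=1\ge 0$ for every $u$, so $\mathbf{j}+\mathbf{2}_{n}\in\mathbb{N}^{n}$, $\mathbf{j}\in(\mathbb{Z}_{\ge0})^{n}$, the integral converges, and unwinding the definition of $\Ls_{\mathbf{k}}^{\mathbf{l}}$ gives
\[\Ls_{\mathbf{j}+\mathbf{2}_{n}}^{\mathbf{j}}(\sigma)=(-1)^{n}\int_{0}^{\sigma}\int_{0}^{\theta_{n}}\dots\int_{0}^{\theta_{2}}\prod_{u=1}^{n}\theta_{u}^{j_{u}}A(\theta_{u})\,d\theta_{1}\cdots d\theta_{n}.\]

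Next I would start from the definition of $\SLs(\mathbf{k};\sigma)$ and expand each factor $(\theta_{u}-\sigma)^{k_{u}-2}$ by the ordinary binomial theorem,
\[(\theta_{u}-\sigma)^{k_{u}-2}=\sum_{j_{u}=0}^{k_{u}-2}\binom{k_{u}-2}{j_{u}}(-\sigma)^{k_{u}-2-j_{u}}\theta_{u}^{j_{u}},\]
and then multiply these $n$ finite sums together. Collecting the result over the multi-index $\mathbf{j}=(j_{1},\dots,j_{n})$ with $\mathbf{j}\le\mathbf{k}-\mathbf{2}_{n}$ produces the product $\prod_{u}\theta_{u}^{j_{u}}$ with scalar coefficient $\binom{\mathbf{k}-\mathbf{2}_{n}}{\mathbf{j}}(-\sigma)^{|\mathbf{k}-\mathbf{2}_{n}-\mathbf{j}|}$, using that $\sum_{u}(k_{u}-2-j_{u})=|\mathbf{k}-\mathbf{2}_{n}-\mathbf{j}|$. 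Since the sum over $\mathbf{j}$ is finite, I can interchange it with the iterated integral by linearity, with no convergence issue to worry about.

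Finally, each of the resulting iterated integrals $\int_{0}^{\sigma}\int_{0}^{\theta_{n}}\dots\int_{0}^{\theta_{2}}\prod_{u}\theta_{u}^{j_{u}}A(\theta_{u})\,d\theta_{1}\cdots d\theta_{n}$ is exactly $(-1)^{n}\Ls_{\mathbf{j}+\mathbf{2}_{n}}^{\mathbf{j}}(\sigma)$ by the identity recorded above; pulling the common factor $(-1)^{n}$ out of the sum yields precisely the claimed formula. The only points that require care are the indexing conventions (that $k_{u}\ge2$ guarantees $\mathbf{j}+\mathbf{2}_{n}$ is a legitimate index and that the exponent of $A(\theta_{u})$ is exactly $1$) and matching the power of $-\sigma$; there is no substantive analytic obstacle.
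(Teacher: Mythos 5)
Your proposal is correct and follows essentially the same route as the paper: expand each factor $(\theta_{u}-\sigma)^{k_{u}-2}$ by the binomial theorem inside the integrand, collect over the multi-index $\mathbf{j}\le\mathbf{k}-\mathbf{2}_{n}$, and identify each resulting iterated integral as $(-1)^{n}\Ls_{\mathbf{j}+\mathbf{2}_{n}}^{\mathbf{j}}(\sigma)$. The paper's proof is exactly this computation, stated at the level of the integrand.
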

\begin{proof}
We calculate the integrand $\prod_{u=1}^{n}\left(\theta_{u}-\sigma\right)^{k_{u}-2}A(\theta_{u})$ of the definition of the shifted log-sine integral.
Then, we have
\begin{align*}
\prod_{u=1}^{n}
\left(\theta_{u}-\sigma\right)^{k_{u}-2}A(\theta_{u})&=\prod_{u=1}^{n}
\sum_{j_{u}=0}^{k_{u}-2}\binom{k_{u}-2}{j_{u}}\theta_{u}^{j_{u}}\left(-\sigma\right)^{k_{u}-2-j_{u}}A(\theta_{u})\\
&=\sum_{\mathbf{j}\le\mathbf{k}-\mathbf{2}_{n}}\binom{\mathbf{k}-\mathbf{2}_{n}}{\mathbf{j}}(-\sigma)^{|\mathbf{k}-\mathbf{2}_{n}-\mathbf{j}|}
\prod_{u=1}^{n}\theta_{u}^{j_{u}}A(\theta_{u}).
\end{align*}
Therefore, we obtain the desired formula.
\end{proof}
\begin{conjecture}
Elements of the set 
\begin{align*}\left\{\pi^{m}\SLs(k_{1},\dots,k_{n}) \mathrel{}\middle|\mathrel{} m \ge 0,  n \ge 0, k_{i} \ge 2\right\}
\end{align*}
are $\mathbb{Q}$-linearly independent.
\end{conjecture}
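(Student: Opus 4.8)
\emph{Toward a proof.} The plan is to reduce the conjecture, via the explicit evaluations proved above, to a linear-independence statement for a family of periods of a cyclotomic mixed Tate motive, and then to invoke the relevant case of Grothendieck's period conjecture. Write $\sigma_{0}$ for the rational multiple of $\pi$ at which $\SLs(\mathbf{k})=\SLs(\mathbf{k};\sigma_{0})$ is evaluated, and let $N$ be the order of the root of unity $e^{i\sigma_{0}}$. By Theorem~\ref{th:evalSLs}, each $\SLs(\mathbf{k})$ is a $\mathbb{Q}[\pi]$-linear combination of the iterated log-sine integrals $\Ls_{\mathbf{j}+\mathbf{2}_{n}}^{\mathbf{j}}(\sigma_{0})$ with $\mathbf{j}\le\mathbf{k}-\mathbf{2}_{n}$, and this transition is unitriangular for the order $\mathbf{j}\le\mathbf{k}-\mathbf{2}_{n}$ (the leading term $\mathbf{j}=\mathbf{k}-\mathbf{2}_{n}$ carries the coefficient $(-1)^{n}$), hence invertible over $\mathbb{Q}[\pi]$; so the $\mathbb{Q}[\pi]$-span of $\{\SLs(\mathbf{k})\}$ coincides with the $\mathbb{Q}[\pi]$-span of those $\Ls$. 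Applying Theorem~\ref{th:evalLs} at $\sigma=\sigma_{0}$ and taking real and imaginary parts, every such integral lies in the $\mathbb{Q}[\pi]$-algebra generated by the multiple zeta values together with the values $\Cl_{\mathbf{k}}(\sigma_{0})$ and $\Gl_{\mathbf{k}}(\sigma_{0})$ (Definition~\ref{de:CG}), which embeds into the ring of periods of $\mathbb{P}^{1}\setminus(\{0,\infty\}\cup\mu_{N})$.

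It then suffices to establish two separate assertions: (i) the real numbers $\SLs(\mathbf{k})$ are $\mathbb{Q}$-linearly independent; and (ii) $\pi$ is transcendental over the field they generate. Granting these, a putative relation $\sum_{m,\mathbf{k}}c_{m,\mathbf{k}}\pi^{m}\SLs(\mathbf{k})=0$ rewrites as $\sum_{m}a_{m}\pi^{m}=0$ with $a_{m}=\sum_{\mathbf{k}}c_{m,\mathbf{k}}\SLs(\mathbf{k})$ lying in that field, which forces all $a_{m}=0$ by (ii) and then all $c_{m,\mathbf{k}}=0$ by (i). By Proposition~\ref{pr:SLsshuffle} the $\SLs(\mathbf{k})$ realize a quotient of the shuffle algebra on the alphabet $\{2,3,4,\dots\}$ — a free polynomial $\mathbb{Q}$-algebra on Lyndon words, by Radford's theorem — and (i) is the statement that this realization is faithful, i.e. that the $\SLs$ satisfy \emph{no} linear relations beyond those forced by the shuffle product. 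The natural tool is the motivic lift: realize the $\SLs(\mathbf{k})$ as motivic iterated integrals on $\mathbb{P}^{1}\setminus(\{0,\infty\}\cup\mu_{N})$, bound their span in each weight from above by means of the motivic coaction and Brown's ``$f$-alphabet'' (following Goncharov, Deligne--Goncharov, and Deligne's mixed-Tate structure results for these small cyclotomic fields), and match this upper bound with the shuffle lower bound. For (ii) one argues, in the same framework, that $\pi$ is, up to an algebraic factor, the period of the Tate object $\mathbb{Q}(1)$, whose motivic Galois orbit is independent of those of the $\SLs(\mathbf{k})$, so no polynomial relation over the field they generate can hold.

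The decisive obstacle is the unconditional proof of (i). Already the sub-family $\{\pi^{m}\}_{m\ge0}\cup\{\pi^{m}\SLs(2)\}_{m\ge0}$ is, after the reduction above (using $\SLs(2)=-\Cl_{2}(\sigma_{0})$), equivalent to the statement that $\Cl_{2}(\sigma_{0})$ does not lie in $\mathbb{Q}(\pi)$, which is open; and in full generality (i) is a cyclotomic analogue of Zagier's dimension conjecture for multiple zeta values, sharpened by the expected algebraic independence of $\pi$ from them. Consequently this plan yields, at best, a reduction of the conjecture to Grothendieck's period conjecture restricted to $\mathbb{P}^{1}\setminus(\{0,\infty\}\cup\mu_{N})$; an unconditional proof appears to be beyond the reach of present methods.
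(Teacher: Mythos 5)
This statement is a \emph{conjecture} in the paper, not a theorem: the paper offers no proof, only numerical evidence (values computed via Theorems \ref{th:evalSLs} and \ref{th:evalLs} and tested with the \texttt{lindep} routine of Pari/GP up to total weight $m+k_{1}+\dots+k_{n}\le 8$, at the argument $\sigma=\pi/3$, which is the intended reading of $\SLs(\mathbf{k})$ here). So there is no proof in the paper to compare yours against, and your proposal --- which you candidly present as a reduction rather than a proof --- does not establish the statement either. Your reductions are sound as far as they go: the transition matrix in Theorem \ref{th:evalSLs} is indeed triangular with diagonal entries $(-1)^{n}$ and off-diagonal entries in $\mathbb{Q}\cdot\pi^{>0}$ at $\sigma=\pi/3$, hence invertible over $\mathbb{Q}[\pi]$; the identification $\SLs(2)=-\Cl_{2}(\pi/3)$ is correct; and the splitting of the conjecture into (i) $\mathbb{Q}$-linear independence of the $\SLs(\mathbf{k})$ themselves and (ii) transcendence of $\pi$ over the field they generate is a legitimate logical decomposition. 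But both (i) and (ii) are themselves wide open (as you note, even $\Cl_{2}(\pi/3)\notin\mathbb{Q}(\pi)$ is not known), and the motivic machinery you invoke yields at best an upper bound on dimensions in each weight, never the matching lower bound that linear independence requires without the period conjecture. The net content of your proposal is therefore a conditional statement (``the conjecture follows from Grothendieck's period conjecture for $\mathbb{P}^{1}\setminus(\{0,\infty\}\cup\mu_{6})$''), which is a reasonable and even useful observation, but it should be labelled as such and not as a proof; to make it a citable result you would also need to carry out the motivic upper-bound computation explicitly rather than gesture at it.

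One further caution: in your step matching ``the shuffle lower bound,'' Radford's theorem gives freeness of the abstract shuffle algebra, but it does not by itself give any lower bound on the dimension of the $\mathbb{Q}$-span of the \emph{real numbers} $\SLs(\mathbf{k})$ in a given weight --- that is exactly the content of (i), so this part of the argument is circular as written. The honest summary is that your plan, like the paper's, leaves the conjecture open; the paper's numerical verification and your conditional reduction are complementary pieces of evidence, not proofs.
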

The author calculated numerical values of shifted log-sine integrals by using Theorem \ref{th:evalSLs} and Theorem \ref{th:evalLs}. Then, by using the lindep of Pari/GP, there seems to be no $\mathbb{Q}$-linear relations among these values satisfying $m+k_{1}+\dots+k_{n}\le8$.
\subsection{Conjectures on multiple zeta values}\label{se:conjMZVs}
We define $\mathcal{Z}_{k}$ as the vector space spanned by all multiple zeta values of weight $k$. Namely,
\begin{align*}
\mathcal{Z}_{k} &= \sum_{\substack{0 \le n \le k \\ k_{1}+\dots+k_{n}=k \\ k_{1},\dots,k_{n-1} \ge 1, k_{n} \ge 2}}\mathbb{Q}\cdot\zeta(k_{1},\dots,k_{n}).
\end{align*}
We write $\SLs(\mathbf{k}) := \SLs(\mathbf{k};\pi/3)$ and define 
\begin{align*}S'_{k,d}=\left\{\pi^{2m}\SLs(k_{1},\dots,k_{n}) \mathrel{}\middle|\mathrel{} \begin{gathered}
2m+k_{1}+\dots+k_{n}=k,\\m \ge 0,  d \ge n \ge 0, \\k_{i} \ge 3:{\rm odd}.\end{gathered}\right\}
\end{align*}
for $k\in\mathbb{Z}_{\ge0}$ and $d\in\mathbb{Z}_{\ge0}$.
\begin{conjecture}\label{con:genZ}
Every multiple zeta value with weight $k$ and depth $d$ can be written as a $\mathbb{Q}$-linear combination of elements of $S'_{k,d}$.
\end{conjecture}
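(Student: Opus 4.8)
The conjecture is a \emph{spanning} statement, so it requires no new transcendence or linear-independence input; the natural plan is to derive it from the two structural facts already available, namely the theorem of \cite{U} that every multiple zeta value is a $\mathbb{Q}$-linear combination of powers of $\pi$ and iterated log-sine integrals $\Ls_{\mathbf{k}}^{\mathbf{l}}(\pi/3)$ with $k_u-1-l_u>0$ for all $u$, and Theorem \ref{th:evalSLs}. The binomial relation of Theorem \ref{th:evalSLs} is triangular in $\mathbf{j}$, hence invertible, so conversely each $\Ls_{\mathbf{j}+\mathbf{2}_{n}}^{\mathbf{j}}(\pi/3)$ is a $\mathbb{Q}$-linear combination of powers of $\pi$ times values $\SLs(\mathbf{m};\pi/3)$ with $\mathbf{m}\ge\mathbf{2}_{n}$. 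Modulo these two facts the conjecture reduces to a purely combinatorial reduction inside the $\mathbb{Q}$-span of $\{\Ls_{\mathbf{k}}^{\mathbf{l}}(\pi/3)\}$: (i) every such value equals a combination, of no larger depth, of the ``minimal'' integrals $\Ls_{\mathbf{j}+\mathbf{2}_{n}}^{\mathbf{j}}(\pi/3)$ in which every exponent of $A(\theta)$ is $1$; (ii) in the corresponding $\SLs(\mathbf{m};\pi/3)$ one may further take every entry of $\mathbf{m}$ to be odd and $\ge 3$; and (iii) the number of entries is bounded by the depth $d$ of the multiple zeta value represented. I would run this as a double induction --- on the weight $k$, and for fixed $k$ on the depth $d$ --- peeling off products of lower-weight values via Proposition \ref{pr:SLsshuffle} (handled by the inductive hypothesis) and using Theorem \ref{th:evalLs} to make the depth bookkeeping explicit, since the multiple polylogarithms at $e^{i\sigma}$ occurring there have depth at most $n$.

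For step (i), the expansion of Lemma \ref{le:LstoILi} combined with Proposition \ref{pr:ILitoF} shows that $\Ls_{\mathbf{k}}^{\mathbf{l}}(\pi/3)$ differs from an explicit $\mathbb{Q}(i)$-combination of minimal integrals only by multiple zeta values of strictly smaller depth and powers of $\pi$, so reinserting this into the induction collapses all higher powers of $A$. Step (ii) is where the arithmetic of $\omega=e^{i\pi/3}$ enters decisively: because $1-\omega=\bar{\omega}$ (equivalently $A(\pi/3)=0$ and $\Li_{1}(\omega)=i\pi/3$), complex conjugation together with the path-composition functional equations relating $\Li_{\mathbf{k}}(z)$ and $\Li_{\mathbf{k}}(1-z)$ produce, after stuffle reduction, relations that annihilate the ``even-entry'' directions in the depth-graded quotient --- the higher-depth analogue of $\zeta$ at even integers being a power of $\pi$, hence invisible in depth $1$. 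The surviving odd entries $\ge 3$ are meant to mirror the conjectural odd-weight generators $\sigma_{3},\sigma_{5},\dots$ of the motivic Galois Lie algebra, and I would set up step (ii) via the coaction on motivic iterated integrals with singularities in the sixth roots of unity so as to match that structure.

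The main obstacle is steps (ii)--(iii) performed \emph{uniformly} in all weights and depths: this is a sharp structural statement about the depth filtration of the Hopf algebra of level-$6$ iterated integrals, intimately tied to the (open) Broadhurst--Kreimer conjecture on depth-graded dimensions of multiple zeta values, and I do not expect it to be provable unconditionally with present techniques. What I would actually establish is therefore twofold: first, the reduction conditionally on the standard motivic and Galois-descent conjectures for mixed Tate motives unramified outside $6$, which yields Conjecture \ref{con:genZ} as a corollary; and second, an unconditional verification in low weight, obtained by running the algorithm implicit in Theorems \ref{th:evalLs} and \ref{th:evalSLs} and solving the resulting finite $\mathbb{Q}$-linear system --- paralleling the numerical checks the author reports for the companion independence conjecture. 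I expect the conditional reduction to be the real theorem and the uniform unconditional statement to remain genuinely open.
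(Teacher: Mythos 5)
The statement you were asked to prove is labelled a \emph{conjecture} in the paper and is not proved there: the author offers only numerical verification up to weight $13$, together with two theoretical special cases --- Theorem \ref{th:rzetainS}, which handles the depth-one values $\zeta(k)$ via the explicit Choi--Cho--Srivastava integral (\ref{eq:CCS}) and the evaluation $\zeta(2k)\in\mathbb{Q}\cdot\pi^{2k}$, and the observation that the shuffle product (Proposition \ref{pr:SLsshuffle}) then covers every multiple zeta value reducible to polynomials in Riemann zeta values and $\pi$, in particular everything of weight at most $7$. Your proposal, to its credit, senses this correctly and ends by conceding that the uniform statement is open; but that concession means you have not produced a proof, and there is no proof in the paper to compare it against.

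The concrete gap is in your steps (ii)--(iii). Granting the result of \cite{U} and the triangular inversion of Theorem \ref{th:evalSLs}, what you actually obtain is that every multiple zeta value of weight $k$ lies in the $\mathbb{Q}$-span of $\pi^{m}\SLs(k_{1},\dots,k_{n})$ with arbitrary $k_{i}\ge 2$, arbitrary $m\ge 0$, and no a priori bound on $n$ in terms of the depth $d$. The set $S'_{k,d}$ imposes three further constraints --- every $k_{i}$ odd and $\ge 3$, the power of $\pi$ even, and $n\le d$ --- and closing that gap is precisely the content of the conjecture: note that $|S'_{k,k}|$ is engineered to equal Zagier's $d_{k}$, so any unconditional spanning proof would already be a major theorem about $\dim\mathcal{Z}_{k}$. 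Your proposed mechanism for eliminating even entries (conjugation at $\omega=e^{i\pi/3}$, path-composition functional equations, the motivic coaction at sixth roots of unity) is not carried out and, as you yourself say, depends on open depth-graded conjectures; likewise the assertion in step (i) that the reduction to the minimal integrals $\Ls_{\mathbf{j}+\mathbf{2}_{n}}^{\mathbf{j}}$ only produces lower-\emph{depth} corrections is exactly the kind of depth bookkeeping that the paper itself can only conjecture (Conjectures \ref{con:genCG} and \ref{con:genL}, the latter shown in Theorem \ref{th:ZCGtoL} to \emph{follow from}, not imply, the present conjecture). What would be salvageable from your outline is the unconditional low-weight verification and the depth-one case, which is what the paper actually establishes.
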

\begin{conjecture}\label{con:basisZ}
The set of the real numbers $S'_{k,k}$ is a basis of $\mathcal{Z}_{k}$.
\end{conjecture}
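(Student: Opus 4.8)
The plan is to split Conjecture~\ref{con:basisZ} into a spanning statement and a dimension count, and to locate the real difficulty in the latter. \emph{A cardinality count.} An element of $S'_{k,k}$ is named by a factor $\pi^{2m}$ together with an ordered tuple $(k_1,\dots,k_n)$ of parts in $\{3,5,7,\dots\}$ with $2m+k_1+\dots+k_n=k$, so the number of such names has generating function
\[
\frac{1}{1-x^2}\cdot\frac{1}{1-\frac{x^3}{1-x^2}}=\frac{1}{1-x^2-x^3}=\sum_{k\ge0}d_k x^k ,
\]
where $d_k$ is the Deligne--Goncharov dimension (so $d_k=d_{k-2}+d_{k-3}$, $d_0=1$, $d_1=0$). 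Since $\dim_{\mathbb{Q}}\mathcal{Z}_k\le d_k$ by Deligne--Goncharov and Terasoma, Conjecture~\ref{con:basisZ} is equivalent to the conjunction of (A) $S'_{k,k}\subseteq\mathcal{Z}_k$ and $S'_{k,k}$ spans $\mathcal{Z}_k$, and (B) $\dim_{\mathbb{Q}}\mathcal{Z}_k=d_k$ (the lower bound in Zagier's dimension conjecture): given (A) and (B), the $d_k$ named elements are forced to be pairwise distinct and $\mathbb{Q}$-linearly independent, hence a basis, and the converse is immediate.

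\emph{The spanning half (A).} This is Conjecture~\ref{con:genZ} summed over depths; it is stronger than the theorem of \cite{U} recalled in the Introduction, which only asserts that \emph{general}-shape iterated log-sine integrals $\pi^m\Ls_{\mathbf{k}'}^{\mathbf{l}'}(\pi/3)$ with $k'_u-1-l'_u>0$ suffice to express all multiple zeta values, whereas (A) asserts that the \emph{special}-shape ones $\Ls_{\mathbf{j}+\mathbf{2}_{n}}^{\mathbf{j}}(\pi/3)$ with odd indices --- equivalently, by Theorem~\ref{th:evalSLs}, the shifted log-sine integrals $\SLs(k_1,\dots,k_n;\pi/3)$ with $k_i\ge3$ odd --- already do. An elementary route to (A) would start from \cite{U}, pass to shifted log-sine integrals via the inverse of Theorem~\ref{th:evalSLs} (for each depth $n$ the transition matrix between the vectors $\SLs(\mathbf{k};\sigma)$ and $\Ls_{\mathbf{j}+\mathbf{2}_{n}}^{\mathbf{j}}(\sigma)$ is triangular with diagonal $(-1)^n$ and entries in $\mathbb{Q}[\sigma]$, hence invertible over $\mathbb{Q}[\sigma]$), and then normalise the $\SLs(\mathbf{k};\pi/3)$ so produced into the all-odd-$\ge3$, even-$\pi$-power shape via the shuffle relations of Proposition~\ref{pr:SLsshuffle}, the parity relation of Lemma~\ref{le:Lsrefrect}, and regularisation; the step reducing general-shape log-sine integrals to the special shape is the part that is not obviously automatic. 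A further subtlety is that $\Ls_{\mathbf{j}+\mathbf{2}_{n}}^{\mathbf{j}}(\pi/3)$ is, through Theorem~\ref{th:evalLs}, a priori only a period of $\pi_1^{\mathrm{mot}}(\mathbb{G}_m\setminus\mu_6)$ --- it involves multiple polylogarithms at $e^{i\pi/3}$ --- so one must also show that the genuinely level-$6$ contributions cancel for the odd-index combinations, a descent for which Deligne's description of that fundamental group in low weight is the natural input.

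\emph{The main obstacle: linear independence (B).} By the count above, (B) is exactly the lower bound $\dim_{\mathbb{Q}}\mathcal{Z}_k\ge d_k$, which is open in general; in particular an unconditional proof of Conjecture~\ref{con:basisZ} would settle it, so no unconditional proof is realistically in reach and the right target is the motivic analogue. Concretely: work in Brown's algebra of motivic multiple zeta values $\mathcal{H}_k$, where $\dim\mathcal{H}_k=d_k$ is a theorem and the coaction is computable; lift the shifted log-sine integrals at $\pi/3$ to motivic periods $\SLs^{\mathfrak{m}}(k_1,\dots,k_n)$ --- which, via Theorems~\ref{th:evalLs} and~\ref{th:evalSLs} and the level-reduction noted above, should land in $\mathcal{H}_k$ itself --- and show that the $d_k$ elements $\pi^{2m}\SLs^{\mathfrak{m}}(k_1,\dots,k_n)$ are $\mathbb{Q}$-linearly independent (equivalently, span $\mathcal{H}_k$) by computing their images under Brown's coaction in the $f$-alphabet and checking the resulting $d_k$ vectors are independent, e.g.\ by comparison with the Hoffman basis $\{\zeta^{\mathfrak{m}}(k_1,\dots,k_n):k_i\in\{2,3\}\}$. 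That would establish the motivic form of Conjecture~\ref{con:basisZ} unconditionally --- hence the spanning half (A) for the actual real numbers, via the surjective period map $\mathcal{H}_k\to\mathcal{Z}_k$ --- and would reduce the numerical linear independence (B) to the period conjecture, leaving only that ingredient conditional.
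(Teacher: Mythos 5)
The statement you were asked to prove is labelled a \emph{conjecture} in the paper, and the paper contains no proof of it: the author only (i) verifies it numerically up to weight $13$, (ii) observes that $|S'_{k,d}|$ satisfies $|S'_{k,d}|=|S'_{k-2,d}|+|S'_{k-3,d-1}|$ so that Conjecture~\ref{con:basisZ} would imply Zagier's dimension conjecture, and (iii) proves the very partial Theorem~\ref{th:rzetainS} (depth-one zeta values lie in the span of $S'_{k,1}$, whence all multiple zeta values of weight up to $7$ lie in the span of $S'_{k,k}$). Your proposal is likewise not a proof, and you are explicit about that; judged as a reduction it is correct and consistent with what the paper actually says. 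Your generating-function count $\frac{1}{1-x^2}\cdot\bigl(1-\frac{x^3}{1-x^2}\bigr)^{-1}=\frac{1}{1-x^2-x^3}$ is the closed form of the paper's recursion for $|S'_{k,k}|$, and the equivalence ``Conjecture~\ref{con:basisZ} $\Leftrightarrow$ (spanning) $+$ ($\dim\mathcal{Z}_k=d_k$)'' is sound once one grants the Deligne--Goncharov--Terasoma upper bound $\dim\mathcal{Z}_k\le d_k$ (which the paper does not invoke, but which is standard). You correctly locate the genuine obstruction in the lower bound $\dim\mathcal{Z}_k\ge d_k$, which is open, so no unconditional proof is possible; this goes beyond the paper, which does not even isolate the spanning half as a separately attackable problem in the way you do.

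Two cautions on the parts you present as ``routes.'' First, your sketch of the spanning half (A) via inverting Theorem~\ref{th:evalSLs} and then ``normalising'' general $\SLs(\mathbf{k};\pi/3)$ into the all-odd-$\ge3$, even-$\pi$-power shape is exactly where the content lies: Proposition~\ref{pr:SLsshuffle} and Lemma~\ref{le:Lsrefrect} do not obviously suffice to eliminate even entries $k_i$ or odd powers of $\pi$, and the paper offers no mechanism for this reduction (its own evidence stops at depth one and weight $7$). You flag this as ``not obviously automatic,'' which is honest, but it means (A) remains conjectural, not merely technical. Second, the level-$6$ descent you mention (showing the $\mu_6$-contributions from $\Li_{\mathbf{k}}(e^{i\pi/3})$ cancel in the odd-index combinations) is a real issue that the paper silently absorbs into its numerics; your motivic program would have to solve it before the coaction computation in $\mathcal{H}_k$ even makes sense. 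In short: there is no gap to compare against the paper's argument because the paper has none; your proposal is a defensible conditional analysis of an open conjecture, not a proof of it.
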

Zagier \cite{Z} conjectured $\dim{\mathcal{Z}_{k}}=d_{k}$, where $d_{k}$ is a sequence defined by 
\begin{align*}
d_{0}&=1,\,d_{1} = 0,\,d_{2} = 1,\\
d_{k}&= d_{k-2} + d_{k-3}\quad(k \ge 3).
\end{align*}
If Conjecture \ref{con:basisZ} is true, then Zagier's conjecture is true, because the numbers $|S'_{k,d}|$ of elements of $S'_{k,d}$ satisfy
\begin{align*}
|S^{\prime}_{0,d}| &= 1,\,|S^{\prime}_{1,d}| = 0,\,|S^{\prime}_{2,d}| = 1\quad(d\ge0),\\
|S^{\prime}_{k,d}| &= |S^{\prime}_{k-2,d}| + |S^{\prime}_{k-3,d-1}|\quad(k\ge3, d\ge1).
\end{align*}
We consider the set of multiple zeta values
\begin{align*}H_{k,d}=\left\{\zeta(k_{1},\dots,k_{n}) \mathrel{}\middle|\mathrel{} \begin{gathered}
k_{1}+\dots+k_{n}=k,\ n \le k,\\ \ k_{i} \in \{2,3\},\ |\{ i \mid k_{i} = 3\}| \le d\end{gathered}\right\}.
\end{align*}
Hoffman \cite{H} conjectured that the set of multiple zeta values $H_{k,k}$ is a basis of $\mathcal{Z}_{k}$. Conjecture \ref{con:basisZ} can be regarded as an analogue of Hoffman's conjecture.
It follows by Brown \cite{B} and Goncharov \cite[Theorem 1.2]{G} that every multiple zeta value with weight $k$ and depth $d$ can be written as a $\mathbb{Q}$-linear combination of elements of $H_{k,d}$. Conjecture \ref{con:genZ} can be regarded as an analogue of this assertion. However, the real numbers in $S'_{k,d}$ have different properties from those of the multiple zeta values in $H_{k,d}$. We define 
\begin{align*}
\mathcal{S}'_{k,d}=\text{span}_{\mathbb{Q}}(S'_{k,d})\ \ \text{and}\ \ 
\mathcal{H}_{k,d}=\text{span}_{\mathbb{Q}}(H_{k,d}).
\end{align*}
Then, we can see that $\mathcal{S}'_{k_{1},d_{1}} \cdot \mathcal{S}'_{k_{2}, d_{2}} \subset \mathcal{S}'_{k_{1}+k_{2},d_{1}+d_{2}}$ just by using the shuffle product formula (Proposition \ref{pr:SLsshuffle}). More strictly, we can see that the product of an element of $S'_{k_{1},d_{1}}$ and an element of $S'_{k_{2},d_{2}}$ can be written as the sum of products of a positive integer and an element of $S'_{k_{1}+k_{2}, d_{1}+d_{2}}$. However, we cannot see that $\mathcal{H}_{k_{1},d_{1}} \cdot \mathcal{H}_{k_{2}, d_{2}} \subset \mathcal{H}_{k_{1}+k_{2},d_{1}+d_{2}}$ just by using the harmonic product formula or the shuffle product formula. 
For example, each products of two $\zeta(2) \in H_{2,0}$ are
\begin{align*}
\zeta(2)\cdot\zeta(2) =\begin{cases} 2\zeta(2,2)+\zeta(4)\quad&{\rm (harmonic \ product)},\\
2\zeta(2,2)+4\zeta(1,3)\quad&{\rm (shuffle\ product)}.\end{cases}
\end{align*}
Here, $\zeta(2,2) \in H_{4,0}$, but $\zeta(4) \notin H_{4,0}$ and $\zeta(1,3) \notin H_{4,0}$.

The author checked Conjecture \ref{con:genZ} and Conjecture \ref{con:basisZ} up to weight $13$ by numerical experiments.

Theoretically, we can check that some multiple zeta values can be written as a $\mathbb{Q}$-linear combination of elements of $S'_{k,d}$. First, we consider the Riemann zeta values.
\begin{theorem}\label{th:rzetainS}
The Riemann zeta value $\zeta(k)$ can be written as a $\mathbb{Q}$-linear combination of elements of $S'_{k,1}$.
\end{theorem}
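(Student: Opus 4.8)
The plan is to derive a closed‑form evaluation of $\SLs(k):=\SLs(k;\pi/3)$ for odd $k\ge 3$ and then induct on $k$. The even case needs nothing new: Euler's formula puts $\zeta(k)$ in $\mathbb{Q}\,\pi^{k}$, and $\pi^{k}$ is the $n=0$ element $\pi^{2(k/2)}$ of $S'_{k,1}$, so $\zeta(k)\in\mathcal S'_{k,1}$. From now on assume $k\ge 3$ is odd.

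For the closed form I would use $A(\theta)=-\Re\Li_{1}(e^{i\theta})$ on $(0,2\pi)$, which is the real part of the identity $\Li_{1}(e^{i\theta})=-A(\theta)-i(\theta-\pi)/2$ already used in the proof of Lemma \ref{le:LstoILi}, to write, since $(\theta-\sigma)^{k-2}$ is real,
\[\SLs(k;\sigma)=-\,\Re\int_{0}^{\sigma}(\theta-\sigma)^{k-2}\Li_{1}(e^{i\theta})\,d\theta .\]
Now integrate by parts repeatedly using $\frac{d}{d\theta}\Li_{j+1}(e^{i\theta})=i\Li_{j}(e^{i\theta})$. Because $(\theta-\sigma)^{k-2}$ vanishes to order $k-2\ge 1$ at $\theta=\sigma$, every boundary term at the upper endpoint vanishes except at the very last step, while the lower‑endpoint boundary terms are $\Li_{l}(1)=\zeta(l)$; one is left with
\[\SLs(k;\sigma)=-\,\Re\!\left[(k-2)!\,(-i)^{k-1}\bigl(\zeta(k)-\Li_{k}(e^{i\sigma})\bigr)+\sum_{l=2}^{k-1}\frac{(k-2)!}{(k-l)!}(-i)^{l-1}\sigma^{k-l}\zeta(l)\right].\]
The two features that matter are that the only polylogarithm at $e^{i\sigma}$ occurring is $\Li_{k}(e^{i\sigma})$ itself, and that the phase of the $\zeta(l)$‑coefficient is $(-i)^{l-1}$, which is real precisely when $l$ is odd. (The same identity also drops out of Theorem \ref{th:evalSLs} combined with the explicit formula for $\Ls_{k}^{(k-2)}(\sigma)$ from Section \ref{se:evalLs}, once one checks that all the $\Li_{j}(e^{i\sigma})$ with $j<k$ cancel; this cancellation is visible in the examples $\SLs(3;\sigma)$ and $\SLs(5;\sigma)$.)

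Setting $\sigma=\pi/3$ and using that $\SLs(k)$ is real, I take the real part. For $k$ odd, $(-i)^{k-1}$ is real, so $\Im\Li_{k}(e^{i\pi/3})$ disappears, and every term with $l$ even disappears — in particular every term in which $\zeta(l)$ is a power of $\pi$. The multiplication formula $\sum_{j=0}^{m-1}\Li_{k}(e^{2\pi i j/m})=m^{1-k}\zeta(k)$, applied with $m=3$ and $m=6$, gives $\Re\Li_{k}(e^{i\pi/3})=\mu_{k}\,\zeta(k)$ with $\mu_{k}=\tfrac12(1-2^{1-k})(1-3^{1-k})\in(0,1)$. Hence
\[\SLs(k)=C_{k}\,\zeta(k)+\sum_{\substack{3\le l\le k-2\\ l\ \mathrm{odd}}}E_{k,l}\,\pi^{k-l}\zeta(l),\qquad C_{k}=\pm(k-2)!\,(1-\mu_{k})\neq 0,\quad E_{k,l}\in\mathbb{Q};\]
for $k=3,5$ this reads $\SLs(3)=\tfrac23\zeta(3)$ and $\SLs(5)=\tfrac13\pi^{2}\zeta(3)-\tfrac{29}{9}\zeta(5)$, which serves as a check. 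Note there is no pure $\pi^{k}$ term.

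Finally I induct on odd $k$. For $k=3$ the displayed formula gives $\zeta(3)=C_{3}^{-1}\SLs(3)\in\mathcal S'_{3,1}$. For odd $k>3$, solving for $\zeta(k)$ gives $\zeta(k)=C_{k}^{-1}\SLs(k)-\sum_{l}C_{k}^{-1}E_{k,l}\,\pi^{k-l}\zeta(l)$; here $\SLs(k)\in S'_{k,1}$, and for each odd $l\le k-2$ the induction hypothesis $\zeta(l)=\sum_{l_{1}}\beta_{l_{1}}\pi^{l-l_{1}}\SLs(l_{1})$, with $l_{1}$ odd and $3\le l_{1}\le l$, yields $\pi^{k-l}\zeta(l)=\sum_{l_{1}}\beta_{l_{1}}\pi^{k-l_{1}}\SLs(l_{1})$, each term of which lies in $S'_{k,1}$ because $k-l_{1}$ is even and positive. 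Thus $\zeta(k)\in\mathcal S'_{k,1}$. I expect the only real work to be the bookkeeping in the repeated integration by parts — establishing that the coefficient of $\sigma^{k-l}\zeta(l)$ carries the phase $(-i)^{l-1}$, so that exactly the even‑$l$ terms die upon taking the real part, and that no intermediate $\Li_{j}(e^{i\sigma})$ survives; everything after that, including the induction and the degenerate even‑$k$ case, is routine.
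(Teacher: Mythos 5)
Your proof is correct and follows essentially the same route as the paper: the identity you obtain by repeated integration by parts, after taking real parts at $\sigma=\pi/3$ and using $\Re\Li_{k}(e^{i\pi/3})=\mu_{k}\zeta(k)$, is precisely the Choi--Cho--Srivastava formula that the paper quotes as its key input, and your induction on odd $k$ via the nonvanishing of $C_{k}=\pm(k-2)!\,(1-\mu_{k})$ is identical to the paper's. The only difference is that you supply a self-contained derivation of that cited formula rather than invoking it as known.
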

\begin{proof}
When the argument is even, it is known that $\zeta(2k)\in\mathbb{Q}\cdot\pi^{2k}$. Therefore, we can see that $\zeta(2k)$ can be written as a $\mathbb{Q}$-linear combination of elements of $S'_{2k,1}=S'_{2k,0}=\{\pi^{2k}\}$.

When the argument is odd, the following theorem is known.
\begin{theorem}[{Choi--Cho--Srivistava \cite[(4.14)]{CCS}(Lewin  \cite[(7.160)]{L})}]
For $k \in \mathbb{Z}_{\ge 0}$, we have
\begin{align}
&(-1)^{k}\int_{0}^{\frac{\pi}{3}}\left(\theta-\frac{\pi}{3}\right)^{2k+1}A(\theta)\,d\theta\label{eq:CCS}\\
&=-\frac{1}{2}(2k+1)!(1-2^{-2k-2})(1-3^{-2k-2})\zeta(2k+3)\nonumber\\
&\quad+(2k+1)!\sum_{m=0}^{k}(-1)^{m}\left(\frac{\pi}{3}\right)^{2m}\frac{\zeta(2k+3-2m)}{(2m)!}.\nonumber
\end{align}
\end{theorem}
We show that $\zeta(2k+3)$ can be written as a $\mathbb{Q}$-linear combination of elements of $S'_{2k+3,1}$ by induction on $k$.
By putting $k=0$ in (\ref{eq:CCS}), we obtain $\zeta(3)=(3/2)\SLs(3)$. Therefore, we can see that $\zeta(3)$ can be written as a $\mathbb{Q}$-linear combination of elements of $S'_{3,1}=\{\SLs(3)\}$. By (\ref{eq:CCS}), we have
\begin{align*}
&-\frac{1}{2}(2k+1)!\left((1-2^{-2k-2})(1-3^{-2k-2})-2\right)\zeta(2k+3)\\
&=(-1)^{k}\SLs(2k+3)\\
&\quad-(2k+1)!\sum_{m=1}^{k}(-1)^{m}\left(\frac{\pi}{3}\right)^{2m}\frac{\zeta(2k+3-2m)}{(2m)!}.
\end{align*}
We assume $\zeta(2k+3-2m)$ with $m\in\{1,\dots,k\}$ can be written as a $\mathbb{Q}$-linear combination of elements of $S'_{2k+3-2m,1}$. Then, we can see that $\zeta(2k+3)$ can be written as a $\mathbb{Q}$-linear combination of elements of $S'_{2k+3,1}$ because the product of $\pi^{2m}$ and an element of $S'_{2k+3-2m,1}$ is included in $S'_{2k+3,1}$.
\end{proof}
By Theorem \ref{th:rzetainS} and the shuffle product formula, multiple zeta values such that
\[\zeta(\mathbf{k})\in\sum_{\substack{2m+l_{1}+\dots+l_{h}=|\mathbf{k}|\\m\ge0, \ 0\le h \le d}}\mathbb{Q}\cdot\pi^{2m}\cdot\zeta(l_{1})\cdots\zeta(l_{h})\]
can be written as a $\mathbb{Q}$-linear combination of elements of $S'_{|\mathbf{k}|,d}$.
In particular, multiple zeta values of the weight up to $7$ can be written as a $\mathbb{Q}$-linear combination of elements of $S'_{k,k}$.
\subsection{Conjectures on multiple Clausen values and multiple Glaisher values}\label{se:conjMCVs}
We define $\mathcal{C}_{k}$ as the vector space spanned by all multiple Clausen values of weight $k$, and $\mathcal{G}_{k}$ as the vector space spanned by all multiple Glaisher values of weight $k$. Namely,
\begin{align*}
\mathcal{C}_{k} &= \sum_{\substack{0 \le n \le k \\ k_{1}+\dots+k_{n}=k \\ k_{1},\dots,k_{n} \ge 1}}\mathbb{Q}\cdot {\rm Cl}_{k_{1},\dots,k_{n}}\left(\pi/3\right),\\
\mathcal{G}_{k} &= \sum_{\substack{0 \le n \le k \\ k_{1}+\dots+k_{n}=k \\ k_{1},\dots,k_{n} \ge 1}}\mathbb{Q}\cdot {\rm Gl}_{k_{1},\dots,k_{n}}\left(\pi/3\right).\\
\end{align*}
For $k\in\mathbb{Z}_{\ge0}$ and $d\in\mathbb{Z}_{\ge0}$, we define
\begin{align*}
S^{o}_{k,d}&=\left\{\pi^{m}\SLs(k_{1},\dots,k_{n}) \mathrel{}\middle|\mathrel{} \begin{gathered}
m+k_{1}+\dots+k_{n}=k,\\d\ge n \ge 0,\ n:{\rm odd},\\m \ge 0, k_{i} \ge 2.\end{gathered}\right\},\\
S^{e}_{k,d}&=\left\{\pi^{m}\SLs(k_{1},\dots,k_{n}) \mathrel{}\middle|\mathrel{} \begin{gathered}
m+k_{1}+\dots+k_{n}=k,\\d \ge n \ge 0,\ n:{\rm even},\\m \ge 0, k_{i} \ge 2.\end{gathered}\right\}.
\end{align*}
\begin{conjecture}\label{con:genCG}
The following assertions hold.
\begin{enumerate}[(i)]
\item Every multiple Clausen values with weight $k$ and depth $d$ can be written as a $\mathbb{Q}$-linear combination of elements of $S^{o}_{k,d}$.
\item Every multiple Glaisher values with weight $k$ and depth $d$ can be written as a $\mathbb{Q}$-linear combination of elements of $S^{e}_{k,d}$.
\end{enumerate}
\end{conjecture}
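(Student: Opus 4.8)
The plan is to run, in reverse, the evaluation chain furnished by Theorems~\ref{th:evalSLs} and~\ref{th:evalLs}. Combining them at $\sigma=\pi/3$ — which applies because the integrals $\Ls_{\mathbf{j}+\mathbf{2}_{n}}^{\mathbf{j}}(\pi/3)$ produced by Theorem~\ref{th:evalSLs} satisfy $k_{u}-1-l_{u}=1$ — every shifted log-sine value $\SLs(\mathbf{k})=\SLs(\mathbf{k};\pi/3)$ with $\mathbf{k}\in(\mathbb{Z}_{\ge 2})^{n}$ becomes an explicit $\mathbb{Q}(i)$-linear combination of terms $\pi^{a}\,\zeta(\mathbf{m}_{1})\cdots\zeta(\mathbf{m}_{t})\,\Li_{\mathbf{l}}(e^{i\pi/3})$ of total weight $|\mathbf{k}|$ with ${\rm dep}(\mathbf{l})\le n$; the depth bound is forced because $f_{\mathbf{q}}^{\mathbf{r}}$ involves only the multiple polylogarithms $L(w_{\mathbf{j}}^{\mathbf{r}''};e^{i\pi/3})$, and the index of $w_{\mathbf{j}}^{\mathbf{r}''}$ has depth $\le|\mathbf{r}''|\le n$. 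Linearising the products by the shuffle (or harmonic) relations for multiple polylogarithms, this reads
\[
\SLs(\mathbf{k})=\sum_{{\rm dep}(\mathbf{l})\le n}c_{\mathbf{l}}\,\Li_{\mathbf{l}}(e^{i\pi/3})+\bigl(\text{a $\mathbb{Q}$-combination of }\pi^{a}\text{ and }\pi^{a}\zeta(\mathbf{m})\bigr),\qquad c_{\mathbf{l}}\in\mathbb{Q}(i),
\]
all of weight $|\mathbf{k}|$. Taking real and imaginary parts and invoking Definition~\ref{de:CG} rewrites $\SLs(\mathbf{k})$ through powers of $\pi$, multiple zeta values, and multiple Clausen and Glaisher values at $\pi/3$ of weight $|\mathbf{k}|$ and depth $\le n$.

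The decisive structural input is a \textbf{parity dichotomy}: I expect that the coefficient $c_{\mathbf{l}}$ above always lies in $i^{\,|\mathbf{l}|+n}\mathbb{R}$. By Definition~\ref{de:CG} this means that, after taking the real part, every polylogarithmic term contributes a multiple Clausen value when $n$ is odd and a multiple Glaisher value when $n$ is even; moreover one expects the accompanying $\pi^{a}$- and $\pi^{a}\zeta(\mathbf{m})$-terms to be of the matching type, so that (re-expanding them via Theorem~\ref{th:rzetainS} and Proposition~\ref{pr:SLsshuffle}) $\SLs(\mathbf{k})\in{\rm span}_{\mathbb{Q}}S^{o}_{|\mathbf{k}|,n}$ for $n$ odd and $\SLs(\mathbf{k})\in{\rm span}_{\mathbb{Q}}S^{e}_{|\mathbf{k}|,n}$ for $n$ even — trivially true, but now refined so that each depth-graded piece stays on its own side. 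Proving the claim is a lengthy but mechanical bookkeeping of powers of $i$: one tracks $i^{|\mathbf{l}|+n}(-1)^{|\mathbf{k}|+n}$ and $(-i\pi/2)^{|\mathbf{p}|}$ in $(\ref{eq:main})$, the $(i\sigma)$-powers with $\sigma=\pi/3$, the rationals $B_{\mathbf{q}}$ and $C_{\overline{\mathbf{q}}}^{\mathbf{j}}$ and the sign $(-1)^{h-1}$ in the definitions of $f_{\mathbf{q}}^{\mathbf{r}}$ and $F_{\mathbf{q}}^{\mathbf{r}}$, and the word length of $w_{\mathbf{j}}^{\mathbf{r}''}$. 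The examples after Theorem~\ref{th:evalLs} already confirm the pattern for $n=1$: $\SLs(2)=-\Cl_{2}(\pi/3)$ and $\SLs(3)=\tfrac{2}{3}\zeta(3)=2\Cl_{3}(\pi/3)$, with the would-be power-of-$\pi$ terms cancelling.

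The second half of the argument is the inversion, by induction on the weight $k$ and then on the depth $d$. The base case $d=0$ is trivial ($\Cl_{\emptyset}(\pi/3)=0$). For $d=1$ one uses the distribution relations for the classical polylogarithm at a primitive sixth root of unity, namely $\Re\Li_{s}(e^{i\pi/3})=\tfrac{1}{2}(2^{1-s}-1)(3^{1-s}-1)\zeta(s)$ and $\Im\Li_{s}(e^{i\pi/3})\in\mathbb{Q}\cdot\pi^{s}$ for odd $s$: the only genuinely new depth-$1$ Clausen values are the $\Cl_{2k}(\pi/3)=\Im\Li_{2k}(e^{i\pi/3})$ and the $\Cl_{2k+1}(\pi/3)\in\mathbb{Q}\cdot\zeta(2k+1)$, the latter lying in ${\rm span}_{\mathbb{Q}}S^{o}$ by Theorem~\ref{th:rzetainS} and the former following from the case $n=1$ of the dichotomy together with the non-vanishing of the coefficient of $\Li_{2k}(e^{i\pi/3})$ in $\SLs(2k)$ (similarly for Glaisher values, where every depth-$1$ value is already a power of $\pi$). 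For the inductive step, the dichotomy shows that the $\mathbb{Q}$-linear map sending the shifted log-sine values of weight $k$ and odd depth $\le d$ to the multiple Clausen values of weight $k$ and depth $\le d$ (modulo strictly smaller depth, handled by induction) is \emph{triangular} with respect to the depth filtration, and likewise for even depth and Glaisher values; it then suffices to prove that the diagonal blocks are \emph{invertible}, i.e.\ that for fixed weight and fixed (odd/even) depth the shifted log-sine values span, modulo lower depth, the corresponding space of Clausen (resp.\ Glaisher) values.

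This last surjectivity is the point at which the argument remains only a plan. Invertibility of the triangular system is precisely the assertion that there are no unexpected $\mathbb{Q}$-linear relations among the iterated log-sine integrals and polylogarithms at $e^{i\pi/3}$ involved, a statement of the same strength as Conjectures~\ref{con:genZ} and~\ref{con:basisZ} (at depth $1$ it already collapses onto the linear independence of powers of $\pi$ and Riemann zeta values). A realistic target is therefore a \emph{conditional} theorem: from cardinality recursions for $|S^{o}_{k,d}|$ and $|S^{e}_{k,d}|$ — obtained exactly as the recursion for $|S'_{k,d}|$ in Section~\ref{se:conjMZVs} — together with Conjecture~\ref{con:basisZ} and the expected dimensions of $\mathcal{C}_{k}$ and $\mathcal{G}_{k}$, a dimension count forces the triangular maps above to be isomorphisms, yielding Conjecture~\ref{con:genCG}. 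The genuinely hard part that would remain for an unconditional proof is exactly this dimension/independence input; every other ingredient is supplied by Theorems~\ref{th:evalLs}, \ref{th:evalSLs}, \ref{th:rzetainS} and the shuffle relation.
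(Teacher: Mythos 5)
The statement you are asked to prove is labelled a \emph{conjecture} in the paper, and the paper offers no proof of it: the author only reports numerical verification up to weight $9$ and proves two special cases (Theorem \ref{th:CGinS} for depth $1$, and the indices $(\{1\}^{a},2,\{1\}^{b})$ via the explicit $\SLs$-expansions at the end of Section \ref{se:conjMCVs}). Your proposal, as you yourself concede in the final paragraph, is likewise not a proof: the ``invertibility of the diagonal blocks'' is left open, and everything reduces to it. So the honest verdict is that there is a genuine gap, and it sits exactly where you placed it.

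Two remarks on the substance of your plan. First, the forward direction (expanding $\SLs(\mathbf{k};\pi/3)$ via Theorems \ref{th:evalSLs} and \ref{th:evalLs} into powers of $\pi$, multiple zeta values and multiple polylogarithms at $e^{i\pi/3}$ of depth $\le n$, with the parity of the coefficient of $\Li_{\mathbf{l}}(e^{i\pi/3})$ controlled by $|\mathbf{l}|+n$) is plausible and consistent with the paper's examples ($\SLs(2)=-\Cl_{2}(\pi/3)$, $\SLs(3)=\tfrac{2}{3}\zeta(3)$, and the formula for $\SLs(2k+2)$ in the proof of Theorem \ref{th:CGinS}), but you assert the parity dichotomy rather than prove it; the bookkeeping of powers of $i$ through $B_{\mathbf{q}}$, $C_{\overline{\mathbf{q}}}^{\mathbf{j}}$ and the partitions in Definition \ref{def:F} is exactly the kind of step that needs to be written out before it can be trusted. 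Second, and more importantly, you slightly mischaracterize the remaining obstruction: inverting your triangular system is not merely the assertion that there are ``no unexpected $\mathbb{Q}$-linear relations'' among the $\SLs$-values. The set of multiple Clausen values of weight $k$ and depth $d$ is far larger than $S^{o}_{k,d}$, so surjectivity of the map from $\SLs$-values onto Clausen values modulo lower depth requires the \emph{existence} of a great many relations among multiple polylogarithms at the sixth root of unity, collapsing the Clausen span onto a space of dimension at most $|S^{o}_{k,d}|$. That spanning statement is the actual content of Conjecture \ref{con:genCG} (compare the Borwein--Broadhurst--Kamnitzer dimension conjecture cited in the paper), so your reduction is essentially circular at that point rather than conditional on an independent input. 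What you have is a reasonable strategy for a conditional result, not a proof.
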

\begin{conjecture}\label{con:basisCG}
The following assertions hold.
\begin{enumerate}[(i)]
 \item The set of the real numbers $S^{o}_{k,k}$ is a basis of $\mathcal{C}_{k}$.
 \item The set of the real numbers $S^{e}_{k,k}$ is a basis of $\mathcal{G}_{k}$.
 \item The set of the real numbers $S^{o}_{k,k}\cup S^{e}_{k,k}$ is a basis of $\mathcal{C}_{k}+\mathcal{G}_{k}$.
\end{enumerate}
\end{conjecture}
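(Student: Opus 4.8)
The plan is to split the statement into a \emph{spanning} half and a \emph{linear-independence} half, attack them by quite different means, and recover the three assertions from the decomposition of the relevant period into the $\pm 1$-eigenspaces of complex conjugation, which by Definition~\ref{de:CG} is precisely the Clausen/Glaisher splitting. One inclusion of the spanning half is essentially formal: combining Theorem~\ref{th:evalSLs} with Theorem~\ref{th:evalLs} at $\sigma=\pi/3$ writes each $\SLs(\mathbf{k})=\SLs(\mathbf{k};\pi/3)$ as a $\mathbb{Q}(i)$-linear combination of powers of $\pi$ times multiple polylogarithms $\Li_{\mathbf{j}}(e^{i\pi/3})$ with admissible index (together with multiple zeta values, which themselves reduce to such values by the discussion in the introduction and Theorem~\ref{th:rzetainS}); since $\SLs(\mathbf{k})$ is real, taking real parts and matching the parities of depth and weight with Definition~\ref{de:CG} gives $\text{span}_{\mathbb{Q}}(S^{o}_{k,k})\subseteq\mathcal{C}_k$, $\text{span}_{\mathbb{Q}}(S^{e}_{k,k})\subseteq\mathcal{G}_k$, and also $\text{span}_{\mathbb{Q}}(S^{o}_{k,k}\cup S^{e}_{k,k})\subseteq\mathcal{C}_k+\mathcal{G}_k$.

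The real content of the spanning half is the reverse inclusion, i.e.\ Conjecture~\ref{con:genCG} with $d=k$: every $\Cl_{\mathbf{k}}(\pi/3)$, resp.\ $\Gl_{\mathbf{k}}(\pi/3)$, of weight $k$ should lie in $\text{span}_{\mathbb{Q}}(S^{o}_{k,k})$, resp.\ $\text{span}_{\mathbb{Q}}(S^{e}_{k,k})$. I would prove this by inverting the finite linear system above: filter both the Clausen/Glaisher generators and the $\pi^m\SLs$ generators of a fixed weight by depth, and show the transition matrix is block-triangular with invertible rational diagonal blocks. The Riemann-zeta inputs, and hence all of weight $\le 7$, are already available from Theorem~\ref{th:rzetainS} and the shuffle formula of Proposition~\ref{pr:SLsshuffle}; the general step should be an induction on depth modelled on the Brown--Goncharov reduction to a Hoffman-type basis, carried out over $\mathbb{Q}(\zeta_6)=\mathbb{Q}(\sqrt{-3})$ and then descended to $\mathbb{Q}$. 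One must be careful that the constraint $k_i\ge 2$ defining $S^{o}_{k,d},S^{e}_{k,d}$ — rather than $k_i\ge 3$ odd as in $S'_{k,d}$ — does not make the induction circular with Conjecture~\ref{con:genZ}, so I would isolate the minimal arithmetic input (relations among $\pi$, $\zeta$ at odd arguments, and $\SLs$ of odd indices $\ge 3$) and build everything else formally from the shuffle algebra.

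For the linear-independence half I would pass to the motivic picture: multiple polylogarithms at $e^{i\pi/3}$ are periods of the motivic fundamental groupoid of $\mathbb{P}^1\setminus\{0,\mu_6,\infty\}$, which for $N=6$ lives in the mixed Tate category over $\mathbb{Z}[\zeta_6][1/6]$, so the weight-graded $\mathbb{Q}$-vector space generated by $\Cl$ and $\Gl$ values at $\pi/3$ has Hilbert series bounded above by one computable from the Deligne--Goncharov description. Computing the generating functions of $|S^{o}_{k,k}|$ and $|S^{e}_{k,k}|$ from their recursions (which refine Zagier's $d_k$ exactly as $|S'_{k,d}|$ does) and matching their sum against that bound would give $\dim(\mathcal{C}_k+\mathcal{G}_k)\le|S^{o}_{k,k}|+|S^{e}_{k,k}|$; together with the spanning half this forces equality, hence assertion~(iii), and then (i) and (ii) follow by further decomposing into complex-conjugation eigenspaces. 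The main obstacle is the matching \emph{lower} bound — that the period map is injective in this range, so that these real numbers really are as independent as the motive predicts. This is a transcendence statement of the same depth as Zagier's conjecture and is out of reach unconditionally; what seems realistically attainable is the spanning half together with the cardinality count, which would reduce Conjecture~\ref{con:basisCG} to a Grothendieck-type period conjecture for sixth roots of unity.
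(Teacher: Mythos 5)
This statement is labelled as a \emph{conjecture} in the paper, and the paper offers no proof of it: the author only reports numerical verification up to weight $9$, proves the depth-one case of the spanning statement (Theorem \ref{th:CGinS}), handles the family $\mathbf{k}=(\{1\}^{a},2,\{1\}^{b})$, and observes that the cardinality recursions for $|S^{o}_{k,d}|$ and $|S^{e}_{k,d}|$ match the Borwein--Broadhurst--Kamnitzer dimension prediction, so that Conjecture \ref{con:basisCG} would imply that conjecture. Your proposal is therefore being measured against a target the paper itself does not reach, and, to your credit, you are explicit that you do not reach it either: you reduce the linear-independence half to a Grothendieck-type period conjecture for sixth roots of unity, which is exactly the transcendence obstruction that makes this a conjecture rather than a theorem. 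As a proof, this is a genuine gap, not a stylistic one --- no argument you give (or could give by these methods) establishes that the elements of $S^{o}_{k,k}\cup S^{e}_{k,k}$ are $\mathbb{Q}$-linearly independent real numbers.

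Two further points where your sketch overstates what is ``essentially formal.'' First, the inclusion $\mathrm{span}_{\mathbb{Q}}(S^{o}_{k,k})\subseteq\mathcal{C}_{k}$ is not purely formal: Theorems \ref{th:evalSLs} and \ref{th:evalLs} express $\pi^{m}\SLs(\mathbf{k})$ in terms of powers of $\pi$, multiple zeta values, and $\Li_{\mathbf{j}}(e^{i\pi/3})$, but multiple zeta values are values at $1=e^{i\cdot 0}$, not at $e^{i\pi/3}$, so placing them (and the powers of $\pi$) inside $\mathcal{C}_{k}$ or $\mathcal{G}_{k}$ with the correct depth bound requires additional input of the same nature as Conjecture \ref{con:genZ}; the paper's Theorem \ref{th:ZCGtoL} runs this bookkeeping only in the opposite direction and only conditionally. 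Second, your ``block-triangular transition matrix with invertible rational diagonal blocks'' for the reverse inclusion is precisely Conjecture \ref{con:genCG} restated; the paper proves it only for depth one and for the $(\{1\}^{a},2,\{1\}^{b})$ indices, and no mechanism in the paper (or in your sketch) produces the claimed triangularity in general depth. So the honest summary is: your proposal correctly identifies the structure of the problem and the location of the difficulties, but both halves remain open, and the statement stands as a conjecture.
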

Borwein, Broadhurst and Kamnitzer \cite{BBK} conjectured  $\dim{\mathcal{C}_{k}} = I(k)$ and $\dim{\mathcal{G}_{k}} = R(k)$, where $R(k)$ and $I(k)$ are sequences defined by
\begin{align*}
I(0)&=I(1)=0,\quad R(0)=R(1)=1,\\
I(k) &= I(k-1) + R(k-2)\quad(k\ge2),\\
R(k) &= R(k-1) + I(k-2)\quad(k\ge 2).
\end{align*}
Note that $W(k):=I(k)+R(k)$ is equal to $F_{k+1}$, the $(k+1)$-th Fibonacci number. 
If Conjecture \ref{con:basisCG} is true, then the Borwein-Broadhurst-Kamnitzer conjecture is true, because $|S^{o}_{k,d}|$ and $|S^{e}_{k,d}|$ satisfy
\begin{align*}
|S^{o}_{0,0}| &= |S^{o}_{1,1}|=0, \quad |S^{e}_{0,0}| = |S^{e}_{1,1}|=1,\\
|S^{o}_{k,d}| &= |S^{o}_{k-1,d}| + |S^{e}_{k-2,d-1}|\quad(k\ge2, d\ge1),\\
|S^{e}_{k,d}| &= |S^{e}_{k-1,d}| + |S^{o}_{k-2,d-1}|\quad(k\ge2, d\ge1).
\end{align*}

The author checked Conjecture \ref{con:genCG} and Conjecture \ref{con:basisCG} up to weight $9$ by numerical experiments.

Theoretically, we can check that some multiple Clausen values and multiple Glaisher values can be written as a $\mathbb{Q}$-linear combination of elements of $S^{o}_{k,d}$ and $S^{e}_{k,d}$, respectively. First, we consider the case when the depth is $1$.
\begin{theorem}\label{th:CGinS}The following assertions hold.
\begin{enumerate}[(i)]
 \item The Clausen value $\Cl_{k}(\pi/3)$ can be written as a $\mathbb{Q}$-linear combination of elements of $S^{o}_{k,1}$.
 \item The Glaisher value $\Gl_{k}(\pi/3)$ can be written as a $\mathbb{Q}$-linear combination of elements of $S^{e}_{k,1}(=S^{e}_{k,0})$.
\end{enumerate}
\end{theorem}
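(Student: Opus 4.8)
The plan is to prove the two assertions separately, (i) being the substantive one. Write $\SLs(k)=\SLs((k);\pi/3)=\int_{0}^{\pi/3}(\theta-\pi/3)^{k-2}A(\theta)\,d\theta$, and observe that the constraints on $n$ force $S^{o}_{k,1}=\{\pi^{k-k_{1}}\SLs(k_{1})\mid 2\le k_{1}\le k\}$ and $S^{e}_{k,1}=S^{e}_{k,0}=\{\pi^{k}\}$. For (i) the case $k=1$ is trivial, since $\Cl_{1}(\pi/3)=\Re\Li_{1}(e^{i\pi/3})=-A(\pi/3)=-\log 1=0$ while $S^{o}_{1,1}=\emptyset$. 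For $k\ge2$, I would use $A(\theta)=-\Re\Li_{1}(e^{i\theta})$ to write $\SLs(k;\sigma)=(-1)^{k+1}\Re J_{k}(\sigma)$ with $J_{k}(\sigma)=\int_{0}^{\sigma}(\sigma-\theta)^{k-2}\Li_{1}(e^{i\theta})\,d\theta$. From $J_{2}(\sigma)=-i\bigl(\Li_{2}(e^{i\sigma})-\zeta(2)\bigr)$, the relation $\frac{d}{d\sigma}J_{k}=(k-2)J_{k-1}$ for $k\ge3$, and $\frac{d}{d\sigma}\Li_{m}(e^{i\sigma})=i\Li_{m-1}(e^{i\sigma})$, a short induction on $k$ (comparing derivatives and values at $\sigma=0$) gives
\[
J_{k}(\sigma)=(k-2)!\left((-i)^{k-1}\Li_{k}(e^{i\sigma})-\sum_{j=0}^{k-2}\frac{(-i)^{k-1-j}}{j!}\,\zeta(k-j)\,\sigma^{j}\right).
\]
(This amounts to $k-2$ integrations by parts, whose boundary terms all make sense because $\Li_{m}$ is continuous on $|z|\le1$ for $m\ge2$; it also follows from the explicit formula for $\Ls_{k}^{(k-2)}(\sigma)$ recorded after the examples together with Theorem \ref{th:evalSLs}.)

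Specializing to $\sigma=\pi/3$ and taking real parts, I would use that $(-i)^{k-1}$ is real when $k$ is odd and purely imaginary when $k$ is even, so by Definition \ref{de:CG} one has $\Re\bigl((-i)^{k-1}\Li_{k}(e^{i\pi/3})\bigr)=\varepsilon_{k}\Cl_{k}(\pi/3)$ for a sign $\varepsilon_{k}\in\{\pm1\}$, while $\Re\bigl((-i)^{k-1-j}\bigr)$ vanishes unless $k-j$ is odd, in which case it equals $(-1)^{(k-1-j)/2}$. Solving for $\Cl_{k}(\pi/3)$ then exhibits it as a $\mathbb{Q}$-linear combination of $\SLs(k)$ and of the numbers $\pi^{j}\zeta(k-j)$ with $k-j$ odd and $3\le k-j\le k$. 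Now $\SLs(k)\in S^{o}_{k,1}$, and by Theorem \ref{th:rzetainS} each $\zeta(k-j)$ is a $\mathbb{Q}$-combination of elements of $S'_{k-j,1}$, every one of which has the form $\pi^{2m}\SLs(k_{1})$ with $2m+k_{1}=k-j$ and $k_{1}\ge3$; multiplying by $\pi^{j}$ turns this into $\pi^{2m+j}\SLs(k_{1})$ with $(2m+j)+k_{1}=k$ and $k_{1}\ge2$, hence an element of $S^{o}_{k,1}$. Therefore $\Cl_{k}(\pi/3)\in\operatorname{span}_{\mathbb{Q}}(S^{o}_{k,1})$, which is (i).

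For (ii) it suffices to show $\Gl_{k}(\pi/3)\in\mathbb{Q}\cdot\pi^{k}$. This is classical: for $0<\sigma<2\pi$ the series $\sum_{m\ge1}\cos(m\sigma)/m^{2n}$ and $\sum_{m\ge1}\sin(m\sigma)/m^{2n+1}$ are rational multiples of $(2\pi)^{2n}B_{2n}\!\left(\frac{\sigma}{2\pi}\right)$ and $(2\pi)^{2n+1}B_{2n+1}\!\left(\frac{\sigma}{2\pi}\right)$ respectively (Jonquière's functional equation, equivalently the Fourier expansion of the Bernoulli polynomials; see e.g.\ \cite{L}), and by Definition \ref{de:CG} these are exactly $\Gl_{k}(\sigma)$ for $k=2n$ and $k=2n+1$. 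Since $B_{k}\in\mathbb{Q}[x]$, $\Gl_{k}(\sigma)$ is for $0<\sigma<2\pi$ a polynomial in $\sigma$ and $\pi$ with rational coefficients, homogeneous of degree $k$; evaluating at $\sigma=\pi/3$ gives $\Gl_{k}(\pi/3)\in\mathbb{Q}\cdot\pi^{k}$. (For even $k$, the distribution relation for $\Li_{k}$ at sixth roots of unity alternatively gives the cleaner $\Gl_{k}(\pi/3)=\tfrac12(1-2^{1-k})(1-3^{1-k})\zeta(k)$.)

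The only genuine work lies in (i): establishing the closed form for $J_{k}(\sigma)$ and, a bit more delicately, verifying that feeding the residual odd zeta values through Theorem \ref{th:rzetainS} and multiplying by the leftover power of $\pi$ keeps everything inside $S^{o}_{k,1}$ rather than a larger set. I do not anticipate any conceptual obstacle beyond keeping the indices straight and treating the initial cases $k=2,3$.
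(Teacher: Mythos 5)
Your proof is correct. For part (ii) it coincides with the paper's argument: the Bernoulli--polynomial evaluation of $\Gl_{k}$ gives $\Gl_{k}(\pi/3)\in\mathbb{Q}\cdot\pi^{k}=\operatorname{span}_{\mathbb{Q}}(S^{e}_{k,1})$. For part (i) your route is genuinely, if mildly, different. The paper splits on the parity of $k$: for odd weight it quotes Lewin's classical evaluation $\Cl_{2k+1}(\pi/3)=\tfrac12(1-2^{-2k})(1-3^{-2k})\zeta(2k+1)$ and concludes from Theorem \ref{th:rzetainS} together with $S'_{2k+1,1}\subset S^{o}_{2k+1,1}$, while for even weight it writes down the closed form of $\SLs(2k+2)$ obtained from Theorems \ref{th:evalSLs} and \ref{th:evalLs} and solves for $\Cl_{2k+2}(\pi/3)$. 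You instead derive the closed form of $J_{k}(\sigma)$ (equivalently of $\SLs(k;\sigma)$; as you note, it is the $\Ls_{k}^{(k-2)}$ formula recorded after the examples fed through Theorem \ref{th:evalSLs}) for every $k\ge2$ by an elementary differentiation-and-induction argument, and then treat both parities at once: taking real parts at $\sigma=\pi/3$ kills exactly the even-argument zeta terms, leaving $\SLs(k)$ together with $\pi^{j}\zeta(k-j)$ for $k-j$ odd, and the bookkeeping that pushes these through Theorem \ref{th:rzetainS} into $S^{o}_{k,1}$ (the parity of $k-j$ rules out the pure powers of $\pi$ in $S'_{k-j,1}$, so only terms $\pi^{2m+j}\SLs(k_{1})$ with $2m+j+k_{1}=k$ survive) is checked correctly and matches what the paper does in its even case. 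What your version buys is uniformity in $k$ and independence from the tabulated odd-weight Clausen evaluation, plus an explicit treatment of the degenerate case $k=1$, which the paper leaves implicit; the paper's version is shorter for odd weights because the classical identity does the work there.
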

\begin{proof}
Clausen values of odd index are evaluated as follows 
(see \cite[p.198]{L}):
\begin{align*}\Cl_{2k+1}(\pi/3) &= \frac{1}{2}(1-2^{-2k})(1-3^{-2k})\zeta(2k+1).
\end{align*}
By Theorem \ref{th:rzetainS} and $S'_{2k+1,1} \subset {S}^{o}_{2k+1, 1}$, the assertion $(i)$ for odd indices holds.

On even index, we use the following formula for $\SLs(2k+2)$:
\begin{align*}
\SLs(2k+2)&=\frac{i}{2k+1}\left(-\frac{\pi}{3}\right)^{2k+2}+\frac{i}{2(2k+2)}\left(-\frac{\pi}{3}\right)^{2k+2}\\
&+i(-1)^{k}(2k)!\Li_{2+2k}(e^{i\pi/3})-i(-1)^{k}(2k)!\sum_{j=0}^{2k}\frac{\left(i\pi/3\right)^{2k-j}}{(2k-j)!}\zeta(2+j),
\end{align*}
which is obtained by Theorem \ref{th:evalSLs} and Theorem \ref{th:evalLs}. Therefore, we obtain
\begin{align*}
&\Cl_{2+2k}(\pi/3)\\
&=-\frac{(-1)^{k}}{(2k)!}\SLs(2k+2)+\sum_{j=0}^{k-1}(-1)^{k-j-1}\frac{\left(\pi/3\right)^{2k-2j-1}}{(2k-2j-1)!}\zeta(3+2j)
\end{align*}
and obtain the assertion $(i)$.

On the other hand, the following explicit evaluation of Glaisher functions is known (see \cite[(7.60)]{L}, for example):
\begin{align*}
\Gl_{k}(2\pi x) = (-1)^{1+[k/2]}2^{k-1}\pi^{k}B_{k}(x)/k!\qquad(0\le x\le1, k>1),
\end{align*}
where $B_{n}(x)$ denotes the $n$-th Bernoulli polynomial.
Therefore, we have
\begin{align*}
\Gl_{k}\left(\pi/3\right) = (-1)^{1+[k/2]}2^{k-1}\pi^{k}B_{k}(1/6)/k!.
\end{align*}
Since ${S}^{e}_{k,1}={S}^{e}_{k,0}=\{\pi^{k}\}$, we can see that Glaisher value $\Gl_{k}(\pi/3)$ can be written as a $\mathbb{Q}$-linear combination of elements of  $S^{e}_{k,1}$.
\end{proof}

We consider indices the form $\mathbf{k}=(\{1\}^{a},2,\{1\}^{b})$. By \cite[Theorem 2]{U}, we have
\begin{align*}
&\mathrm{Li}_{\{1\}^{a},2,\{1\}^{b}}(e^{\frac{\pi}{3}i})\\
&=i^{a+b+1}\int_{0<\theta_{1}<\dots<\theta_{a+b+1}<\frac{\pi}{3}} A(\theta_{a+2}) - A(\theta_{a+1}) - \frac{i\theta_{a+2}}{2} + \frac{i\theta_{a+1}}{2}\,d\theta_{1}\cdots d\theta_{a+b+1}\\
&=i^{a+b+1}\frac{1}{(a+1)!}\frac{1}{(b-1)!}\int_{0}^{\frac{\pi}{3}}\theta^{a+1}A(\theta)\left(\frac{\pi}{3}-\theta\right)^{b-1}\,d\theta\\
&\quad-i^{a+b+1}\frac{1}{a!}\frac{1}{b!}\int_{0}^{\frac{\pi}{3}}\theta^{a}A(\theta)\left(\frac{\pi}{3}-\theta\right)^{b}\,d\theta\\
&\quad+\frac{i^{a+b}}{2}\left(\frac{a+2}{(a+b+2)!}\left(\frac{\pi}{3}\right)^{a+b+2}-\frac{a+1}{(a+b+2)!}\left(\frac{\pi}{3}\right)^{a+b+2}\right)\\
&=i^{a+b+1}\sum_{j=0}^{a+1}\frac{1}{(a+1)!}\frac{(-1)^{b-1}}{(b-1)!}\binom{a+1}{j}\left(\frac{\pi}{3}\right)^{j}\SLs(a+b+2-j)\\
&\quad-i^{a+b+1}\sum_{j=0}^{a}\frac{1}{a!}\frac{(-1)^{b}}{b!}\binom{a}{j}\left(\frac{\pi}{3}\right)^{j}\SLs(a+b+2-j)+i^{a+b}\frac{(\pi/3)^{a+b+2}}{2(a+b+2)!}
\end{align*}
for $a\ge0$ and $b\ge1$, and similarly
\begin{align*}
&\mathrm{Li}_{\{1\}^{a},2}(e^{\frac{\pi}{3}i})=-i^{a+1}\sum_{j=0}^{a}\frac{1}{a!}\binom{a}{j}\left(\frac{\pi}{3}\right)^{j}\SLs(a+2-j)+i^{a}\frac{(\pi/3)^{a+2}}{2(a+2)!}
\end{align*}
for $a\ge0$.
Therefore,  for $a\ge0$ and $b\ge0$, we can see that $\Cl_{\{1\}^{a},2,\{1\}^{b}}(\pi/3)$ and $\Gl_{\{1\}^{a},2,\{1\}^{b}}(\pi/3)$ can be written as a $\mathbb{Q}$-linear combination of elements of $S^{o}_{a+b+2,1}$ and $S^{e}_{a+b+2,1}(=S^{e}_{a+b+2,0})$, respectively.
\section{On iterated log-sine integrals}\label{se:ILSs}
We define $\mathcal{L}^{o}_{k}$ as the vector space spanned by all iterated log-sine integrals at $\pi/3$ whose weight is $k$ and $\left|\mathbf{k}-\mathbf{1}_{n}-\mathbf{l}\right|$ is an odd number, where we note that $\left|\mathbf{k}-\mathbf{1}_{n}-\mathbf{l}\right|$ is the sum of exponents of $A(\theta):=\log\left|2\sin(\theta/2)\right|$. Similarly, we define $\mathcal{L}^{e}_{k}$ as the vector space spanned by all iterated log-sine integrals at $\pi/3$ whose weight is $k$ and $\left|\mathbf{k}-\mathbf{1}_{n}-\mathbf{l}\right|$ is an even number. Namely,
\begin{align*}
\mathcal{L}^{o}_{k} &= \sum_{\substack{k_{1}+\dots+k_{n}=k, 0 \le n \le k \\ k_{1},\dots,k_{n} \ge 1, l_{1},\dots,l_{n} \ge 0\\ k_{1} -1-l_{1},\dots,k_{n} -1-l_{n} \ge 0 \\\sum_{u=1}^{n}k_{u}-1-l_{u}:\text{odd}}}\mathbb{Q}\cdot \Ls_{k_{1},\dots,k_{n}}^{(l_{1}, \dots, l_{n})}(\pi/3),\\
\mathcal{L}^{e}_{k} &= \sum_{\substack{k_{1}+\dots+k_{n}=k, 0 \le n \le k \\ k_{1},\dots,k_{n} \ge 1, l_{1},\dots,l_{n} \ge 0\\ k_{1} -1-l_{1},\dots,k_{n} -1-l_{n} \ge 0\\\sum_{u=1}^{n}k_{u}-1-l_{u}:\text{even}}}\mathbb{Q}\cdot \Ls_{k_{1},\dots,k_{n}}^{(l_{1}, \dots, l_{n})}(\pi/3),\\
\end{align*}
where we understand $\Ls_{\emptyset}^{\emptyset}(\sigma)$ satisfies $\sum_{u=1}^{n}k_{u}-1-l_{u}=0$.

Before stating conjectures on iterated log-sine integrals, we count the number of generators of $\mathcal{L}^{o}_{k}$ and $\mathcal{L}^{e}_{k}$. For non-negative integers $k$, $d$ and a fixed real number $\sigma$, we define
\begin{align*}
L^{o}_{k,d}(\sigma) = \left\{\Ls_{k_{1},\dots,k_{n}}^{(l_{1},\dots,l_{n})}(\sigma) \mathrel{}\middle|\mathrel{} \begin{gathered}
k_{1}+\dots+k_{n}=k, 0 \le n \le k \\ k_{1},\dots,k_{n} \ge 1, l_{1},\dots,l_{n} \ge 0\\ k_{1} -1-l_{1},\dots,k_{n} -1-l_{n} \ge 0 \\\sum_{u=1}^{n}k_{u}-1-l_{u} \le d:odd.\end{gathered}\right\}
\end{align*}
and define $L^{e}_{k,d}(\sigma)$ analogously for the even case.
Then, the number of elements of those sets is evaluated as follows.
\begin{theorem}\label{th:Lnumber}We have
\begin{align}
&\left|L^{o}_{0,d}(\sigma)\right|=\left|L^{o}_{1,d}(\sigma)\right|=0,\ \ &&\left|L^{e}_{0,d}(\sigma)\right|=\left|L^{e}_{1,d}(\sigma)\right|=1,\label{eq:Minitialk}\\
&\left|L^{o}_{2,d}(\sigma)\right|=\begin{cases}0&(d=0),\\1&(d\ge1),\end{cases}\ \ &&\left|L^{e}_{2,d}(\sigma)\right|=2,\label{eq:Minitial2}\\
&\left|L^{o}_{k,0}(\sigma)\right|=0,\ &&\left|L^{e}_{k,0}(\sigma)\right|=
\begin{cases}1&(k=0),\\2^{k-1}&(k\ge1),\end{cases}\label{eq:Minitiald}
\end{align}
and
\begin{align}\label{eq:vrec}
\begin{split}
\left|L^{o}_{k,d}(\sigma)\right| &= 2\left|L^{o}_{k-1,d}(\sigma)\right|+\left|L^{e}_{k-1,d-1}(\sigma)\right|-\left|L^{e}_{k-2,d-1}(\sigma)\right|\quad(k\ge3, d\ge1),\\
\left|L^{e}_{k,d}(\sigma)\right| &= 2\left|L^{e}_{k-1,d}(\sigma)\right|+\left|L^{o}_{k-1,d-1}(\sigma)\right|-\left|L^{o}_{k-2,d-1}(\sigma)\right|\quad(k\ge3, d\ge1).
\end{split}
\end{align}
In particular, we have
\begin{align}\label{eq:vkk}
\begin{split}
\left|L^{o}_{0,0}(\sigma)\right| &=0,\quad\left|L^{e}_{0,0}(\sigma)\right|=1,\\
\left|L^{o}_{k,k}(\sigma)\right| &= (F_{2k}-F_{k})/2\quad(k\ge1),\\
\left|L^{e}_{k,k}(\sigma)\right| &= (F_{2k}+F_{k})/2\quad(k\ge1).
\end{split}
\end{align}
\end{theorem}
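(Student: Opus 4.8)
The plan is to verify the three ``boundary'' families of formulas (\ref{eq:Minitialk}), (\ref{eq:Minitial2}), (\ref{eq:Minitiald}) by directly counting the defining index sets, then establish the recursion (\ref{eq:vrec}) combinatorially, and finally derive (\ref{eq:vkk}) by unwinding the recursion with $d=k$. Since the sets $L^{o}_{k,d}(\sigma)$ and $L^{e}_{k,d}(\sigma)$ do not actually depend on $\sigma$ (only on the index data), I will suppress $\sigma$ throughout; the content is purely a count of pairs $(\mathbf{k},\mathbf{l})$ with $\mathbf{k}\in\mathbb{N}^{n}$, $\mathbf{l}\in(\mathbb{Z}_{\ge0})^{n}$, $k_{u}-1-l_{u}\ge0$ for all $u$, $|\mathbf{k}|=k$, $\sum_{u}(k_{u}-1-l_{u})\le d$, and parity of $\sum_{u}(k_{u}-1-l_{u})$ prescribed.

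\textbf{Boundary cases.} For (\ref{eq:Minitialk}): when $k=0$ the only pair is $(\emptyset,\emptyset)$ with $\sum(k_u-1-l_u)=0$, which is even, giving $|L^{e}_{0,d}|=1$ and $|L^{o}_{0,d}|=0$; when $k=1$ the only index is $\mathbf{k}=(1)$, forcing $\mathbf{l}=(0)$ and $k_1-1-l_1=0$, again even, so $|L^{e}_{1,d}|=1$, $|L^{o}_{1,d}|=0$. For (\ref{eq:Minitial2}): with $k=2$ the compositions are $(2)$ and $(1,1)$; for $(2)$ we may take $l_1\in\{0,1\}$, giving exponent-sums $1$ (odd) and $0$ (even); for $(1,1)$ only $\mathbf{l}=(0,0)$ with exponent-sum $0$ (even). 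Hence there is one odd index (present iff $d\ge1$) and two even indices, yielding (\ref{eq:Minitial2}). For (\ref{eq:Minitiald}): with $d=0$ we require $k_{u}-1-l_{u}=0$ for every $u$, i.e.\ $l_{u}=k_{u}-1$ and the exponent-sum is $0$ (even), so $L^{o}_{k,0}=\emptyset$, while $L^{e}_{k,0}$ is in bijection with compositions of $k$ into parts $\ge1$, of which there are $2^{k-1}$ for $k\ge1$ and $1$ for $k=0$.

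\textbf{The recursion.} This is the crux. I will prove (\ref{eq:vrec}) by classifying a pair $(\mathbf{k},\mathbf{l})$ of total weight $k$ according to its \emph{last} part $(k_{n},l_{n})$, writing $e_{n}:=k_{n}-1-l_{n}\ge0$ for the last exponent and $(\mathbf{k}_{-},\mathbf{l}_{-})$ for the truncation (of weight $k-k_{n}$). Removing the last block changes the total exponent-sum by $e_{n}$ and the parity accordingly. The natural bookkeeping is: (a) if $k_{n}=1$ then $l_{n}=0$, $e_{n}=0$, and $(\mathbf{k}_{-},\mathbf{l}_{-})$ has weight $k-1$, same exponent-sum, same parity; (b) if $k_{n}\ge2$ and $e_{n}=0$ (i.e.\ $l_{n}=k_{n}-1$), again weight drops by $k_{n}\ge2$, same parity; (c) if $k_{n}\ge2$ and $e_{n}\ge1$, parity flips. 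Grouping the two ``$e_{n}=0$ with $k_n=1$ or $k_n\ge2$'' contributions is what should produce the coefficient $2$ in front of $|L^{\bullet}_{k-1,d}|$ together with a correction; the ``$e_{n}\ge1$'' contributions feed the opposite-parity sets at one lower value of $d$, and the subtracted term $|L^{e}_{k-2,d-1}|$ (resp.\ $|L^{o}_{k-2,d-1}|$) is an inclusion–exclusion correction that removes the cases where the bookkeeping double-counts (precisely the sub-pairs whose last block has $k_{n}=2$, $e_{n}=1$, contributing weight $2$ and one unit of exponent-budget). The hard part will be getting these three buckets to line up exactly so that the telescoping yields the stated linear combination with the correct signs; I expect to need a careful generating-function or direct double-counting argument here, and this is the step most likely to hide an off-by-one.

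\textbf{Solving the recursion.} Granting (\ref{eq:Minitialk})–(\ref{eq:vrec}), set $a_k:=|L^{o}_{k,k}|$ and $b_k:=|L^{e}_{k,k}|$. On the diagonal $d=k$, the constraint $\sum(k_u-1-l_u)\le d$ is automatically satisfied for \emph{every} admissible pair of weight $k$ (since that sum is at most $|\mathbf{k}|-n\le k-1$), so $a_k$ and $b_k$ count \emph{all} weight-$k$ pairs of the given parity, with no $d$-truncation. Thus $a_k+b_k$ is the total number of pairs $(\mathbf{k},\mathbf{l})$ of weight $k$, which equals $\sum_{\text{comp }k}\prod_u k_u$; a standard generating-function computation gives $a_k+b_k=F_{2k}$ for $k\ge1$. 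For the difference, the parity-flip structure of the last block gives a clean two-term recursion for $a_k-b_k$ (from case (a)/(b) the parity is preserved, from case (c) it flips), which one checks solves to $a_k-b_k=-F_k$ for $k\ge1$; combining, $a_k=(F_{2k}-F_k)/2$ and $b_k=(F_{2k}+F_k)/2$, together with the trivial base values $a_0=0$, $b_0=1$, which is exactly (\ref{eq:vkk}).
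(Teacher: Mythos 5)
Your boundary counts and your derivation of (\ref{eq:vkk}) are essentially the paper's: the observation that the budget constraint $\sum_u(k_u-1-l_u)\le d$ is vacuous on the diagonal $d=k$ is correct, and your identity $|L^o_{k,k}(\sigma)|+|L^e_{k,k}(\sigma)|=\sum_{\mathrm{comp}(k)}\prod_u k_u=F_{2k}$ is a valid (and slightly more direct) substitute for the paper's recursion $S_k=3S_{k-1}-S_{k-2}$. The genuine gap is exactly where you flagged it: the recursion (\ref{eq:vrec}) is never actually established. Classifying a pair by its \emph{entire} last block $(k_n,l_n)$ and deleting it, as in your cases (a)--(c), yields a recursion with one term for each possible value of $k_n$ (weights $k-1,k-2,k-3,\dots$ all appear), not the stated two-term formula; the ``grouping'' you hope will produce the coefficient $2$ and the single subtracted term is precisely the missing argument. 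Moreover, your identification of the double-counted set (last block with $k_n=2$, $e_n=1$) is wrong: such a block has $l_n=0$ and is produced in only one way, so it is not overcounted.

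The paper closes this by using three weight-by-one operations instead of whole-block deletion: every element of $L^o_{k,d}(\sigma)$ with $k\ge3$ arises either (i) by appending a new final part $(1,0)$ to an element of $L^o_{k-1,d}(\sigma)$, or (ii) by replacing the final part $(k_n,l_n)$ of an element of $L^o_{k-1,d}(\sigma)$ by $(k_n+1,l_n+1)$ (exponent and parity unchanged), or (iii) by replacing the final part of an element of $L^e_{k-1,d-1}(\sigma)$ by $(k_n+1,l_n)$ (exponent up by one, parity flipped). The image of (i) consists of pairs whose last entry of $\mathbf{k}$ equals $1$; for a last entry $\ge2$ one has $l_n\ge1$ or $k_n-1-l_n\ge1$, so (ii) and (iii) cover the rest; and their overlap ($l_n\ge1$ \emph{and} $k_n-1-l_n\ge1$, forcing $k_n\ge3$) is exactly the image of $L^e_{k-2,d-1}(\sigma)$ under $(k_n,l_n)\mapsto(k_n+2,l_n+1)$. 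Inclusion--exclusion then gives $2|L^o_{k-1,d}(\sigma)|+|L^e_{k-1,d-1}(\sigma)|-|L^e_{k-2,d-1}(\sigma)|$, and symmetrically for the even case. Alternatively, your generating-function suggestion does work: a single block contributes $x/((1-x)(1-xy))$ (with $y$ tracking the exponent), so the full series has denominator $1-2x-xy+x^2y$, which encodes the same four-term relation; but one or the other argument must actually be written out, since as it stands the central identity of the theorem is only conjectured.
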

\begin{proof}
We have (\ref{eq:Minitialk}) since iterated log-sine integrals of weight $0$ is only $\Ls_{\emptyset}^{\emptyset}(\sigma)$ and iterated log-sine integrals of weight $1$ is only $\Ls_{1}^{(0)}(\sigma)$. We also have (\ref{eq:Minitial2}) since iterated log-sine integrals of weight $2$ are $\Ls_{2}^{(0)}(\sigma)$, $\Ls_{2}^{(1)}(\sigma)$ and $\Ls_{1,1}^{(0,0)}(\sigma)$. The number of elements of $L^{e}_{k,0}(\sigma)$ is equal to the number of indices of weight $k$ because iterated log-sine integrals satisfying $|\mathbf{k}-\mathbf{1}_{n}-\mathbf{l}|=0$ are of the form $\Ls_{k_{1},\dots,k_{n}}^{(k_{1}-1,\dots,k_{n}-1)}(\sigma)$. 
Therefore,  we obtain (\ref{eq:Minitiald}).

For $k\ge3$ and $d\ge1$, we have
\begin{align*}
\left|L^{o}_{k,d}(\sigma)\right|&=\left|\left\{\Ls_{k_{1},\dots,k_{n},1}^{(l_{1}, \dots, l_{n},0)}(\sigma) \mathrel{}\middle|\mathrel{} \Ls_{k_{1},\dots,k_{n}}^{(l_{1}, \dots, l_{n})}(\sigma) \in L^{o}_{k-1,d}(\sigma)\right\}\right|\\
&\quad+\left|\left\{\Ls_{k_{1},\dots,k_{n}+1}^{(l_{1}, \dots, l_{n}+1)}(\sigma) \mathrel{}\middle|\mathrel{} \Ls_{k_{1},\dots,k_{n}}^{(l_{1}, \dots, l_{n})}(\sigma) \in L^{o}_{k-1,d}(\sigma)\right\}\right|\\
&\quad+\left|\left\{\Ls_{k_{1},\dots,k_{n}+1}^{(l_{1}, \dots, l_{n})}(\sigma) \mathrel{}\middle|\mathrel{} \Ls_{k_{1},\dots,k_{n}}^{(l_{1}, \dots, l_{n})}(\sigma) \in  L^{e}_{k-1,d-1}(\sigma)\right\}\right|\\
&\quad-\left|\left\{\Ls_{k_{1},\dots,k_{n}+2}^{(l_{1}, \dots, l_{n}+1)}(\sigma) \mathrel{}\middle|\mathrel{} \Ls_{k_{1},\dots,k_{n}}^{(l_{1}, \dots, l_{n})}(\sigma) \in L^{e}_{k-2,d-1}(\sigma)\right\}\right|,
\end{align*}
and the corresponding formula holds for $|L^{e}_{k,d}(\sigma)|$. Therefore, we obtain (\ref{eq:vrec}).

By (\ref{eq:vrec}), for $k\ge3$, we obtain
\begin{align*}
&\left|L^{e}_{k,k}(\sigma)\right|+\left|L^{o}_{k,k}(\sigma)\right| \\
&= 3\left(\left|L^{e}_{k-1,k-1}(\sigma)\right|+\left|L^{o}_{k-1,k-1}(\sigma)\right|\right)-\left(\left|L^{e}_{k-2,k-2}(\sigma)\right|+\left|L^{o}_{k-2,k-2}(\sigma)\right|\right).
\end{align*}
By adding $\left|L^{e}_{1,1}(\sigma)\right|+\left|L^{o}_{1,1}(\sigma)\right|=1$ and $\left|L^{e}_{2,2}(\sigma)\right|+\left|L^{o}_{2,2}(\sigma)\right|=3$ to this, we have
\begin{align}\label{eq:ve+vo}
\begin{split}
\left|L^{e}_{k,k}(\sigma)\right|+\left|L^{o}_{k,k}(\sigma)\right|&=F_{2k}\quad(k\ge1).
\end{split}
\end{align}
On the other hand, by (\ref{eq:vrec}), for $k\ge3$ we have
\begin{align*}
&\left|L^{e}_{k,k}(\sigma)\right|-\left|L^{o}_{k,k}(\sigma)\right| \\
&= \left(\left|L^{e}_{k-1,k-1}(\sigma)\right|-\left|L^{o}_{k-1,k-1}(\sigma)\right|\right)+\left(\left|L^{e}_{k-2,k-2}(\sigma)\right|-\left|L^{o}_{k-2,k-2}(\sigma)\right|\right).
\end{align*}
By adding $\left|L^{e}_{1,1}(\sigma)\right|-\left|L^{o}_{1,1}(\sigma)\right|=1$ and $\left|L^{e}_{2,2}(\sigma)\right|-\left|L^{o}_{2,2}(\sigma)\right|=1$ to this, we have
\begin{align}\label{eq:ve-vo}
\begin{split}
\left|L^{e}_{k,k}(\sigma)\right|-\left|L^{o}_{k,k}(\sigma)\right|&=F_{k},\quad(k\ge1).
\end{split}
\end{align}
From (\ref{eq:ve+vo}) and (\ref{eq:ve-vo}), (\ref{eq:vkk}) follows.
\end{proof}
By Theorem \ref{th:Lnumber}, we have $\dim \mathcal{L}^{o}_{k} \le (F_{2k}-F_{k})/2$ and $\dim \mathcal{L}^{e}_{k} \le (F_{2k}+F_{k})/2$ for $k\ge1$. However, these evaluations can be improved by trivial relations \cite[Proposition 2]{U}. For non-negative integers $k$, $d$ and a fixed real number $\sigma$, we define
\begin{align*}
M^{o}_{k,d}(\sigma)=
\left\{\sigma^m\Ls_{k_{1},\dots,k_{n}}^{(l_{1}, \dots, l_{n})}(\sigma) \mathrel{}\middle|\mathrel{} \begin{gathered}m+k_{1}+\dots+k_{n}=k, 0 \le n \le k, \\m\ge0, k_{1},\dots,k_{n} \ge 2, l_{1},\dots,l_{n} \ge 0,\\ k_{1} -1-l_{1},\dots,k_{n} -1-l_{n} \ge 1,\\\sum_{u=1}^{n}k_{u}-1-l_{u} \le d:\text{odd}.\end{gathered}\right\},
\end{align*}
and define $M^{e}_{k,d}(\sigma)$ analogously for the even case. Then, following theorem holds.
\begin{theorem}We have
\begin{align*}
{\rm span}_{\mathbb{Q}}(L^{o}_{k,d}(\sigma))&={\rm span}_{\mathbb{Q}}(M^{o}_{k,d}(\sigma)),\\
{\rm span}_{\mathbb{Q}}(L^{e}_{k,d}(\sigma))&={\rm span}_{\mathbb{Q}}(M^{e}_{k,d}(\sigma)).
\end{align*}
\end{theorem}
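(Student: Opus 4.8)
The plan is to deduce the equality from the elementary ``trivial'' relations of \cite[Proposition 2]{U} (or, if one prefers, reprove those relations on the spot by a single integration by parts), organized as a double inclusion of spans; I will spell out the odd case, the even case being word-for-word the same apart from one degenerate terminal term. The starting point is that if an index pair $(\mathbf{k},\mathbf{l})$ has $k_{u}-1-l_{u}=0$ at some position $u$, then the factor $\theta_{u}^{l_{u}}A(\theta_{u})^{k_{u}-1-l_{u}}$ there is the pure monomial $\theta_{u}^{k_{u}-1}$ (and is simply $1$ when $k_{u}=1$). Integrating that monomial out of the iterated integral --- by parts at the upper endpoint if $u=n$, and via $\int_{\theta_{u-1}}^{\theta_{u+1}}\theta_{u}^{k_{u}-1}\,d\theta_{u}=(\theta_{u+1}^{k_{u}}-\theta_{u-1}^{k_{u}})/k_{u}$ if $u<n$ --- writes $\Ls_{\mathbf{k}}^{\mathbf{l}}(\sigma)$ as a $\mathbb{Q}$-linear combination of iterated log-sine integrals of depth $n-1$ in which $k_{u}$ has been absorbed into an adjacent entry (an entry $k_{v}$ becomes $k_{v}+k_{u}$ and $l_{v}$ becomes $l_{v}+k_{u}$, so $k_{v}-1-l_{v}$ is unchanged, for $v=u-1$ and/or $v=u+1$), plus, when $u=n$, a single extra term of the shape $\sigma^{k_{n}}\Ls_{\mathbf{k}_{-}}^{\mathbf{l}_{-}}(\sigma)$. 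I would record this family of relations carefully and note the three invariants it enjoys: (a) the weight $|\mathbf{k}|$, counted with the exponent of the newly created power of $\sigma$, is unchanged; (b) the total $A$-exponent $\sum_{u}(k_{u}-1-l_{u})$ --- hence its parity and the bound ``$\le d$'' --- is unchanged; (c) the depth strictly decreases.

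For ${\rm span}_{\mathbb{Q}}(M^{o}_{k,d}(\sigma))\subseteq{\rm span}_{\mathbb{Q}}(L^{o}_{k,d}(\sigma))$ I would read the $u=n$ relation backwards. A generator $\sigma^{m}\Ls_{\mathbf{k}}^{\mathbf{l}}(\sigma)$ of $M^{o}_{k,d}(\sigma)$ with $m=0$ already lies in $L^{o}_{k,d}(\sigma)$; if $m\ge1$ (and necessarily $n\ge1$, since $M^{o}$ forces an odd, hence positive, $A$-exponent sum) the relation gives
\[\sigma^{m}\Ls_{(k_{1},\dots,k_{n})}^{(l_{1},\dots,l_{n})}(\sigma)=-m\,\Ls_{(k_{1},\dots,k_{n},m)}^{(l_{1},\dots,l_{n},m-1)}(\sigma)+\Ls_{(k_{1},\dots,k_{n-1},k_{n}+m)}^{(l_{1},\dots,l_{n-1},l_{n}+m)}(\sigma),\]
and by (a)--(b) both terms on the right are iterated log-sine integrals of weight $k$ with the same $A$-exponent sum; one checks the remaining range conditions ($k_{u}\ge1$, $l_{u}\ge0$, $k_{u}-1-l_{u}\ge0$, depth $\le k$) directly, so both lie in $L^{o}_{k,d}(\sigma)$. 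The degenerate case is $\sigma^{m}$ with $n=0$, handled by $\sigma^{m}=-m\,\Ls_{m}^{(m-1)}(\sigma)$; that element has even $A$-exponent sum and is thus relevant only to the even statement, where $M^{e}_{k,d}(\sigma)$ does contain pure powers of $\sigma$.

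For the reverse inclusion ${\rm span}_{\mathbb{Q}}(L^{o}_{k,d}(\sigma))\subseteq{\rm span}_{\mathbb{Q}}(M^{o}_{k,d}(\sigma))$ I would induct on the depth. If a generator $\Ls_{\mathbf{k}}^{\mathbf{l}}(\sigma)$ of $L^{o}_{k,d}(\sigma)$ has $k_{u}-1-l_{u}\ge1$ at every position, then automatically every $k_{u}\ge2$, so it is already --- with the trivial factor $\sigma^{0}$ --- a generator of $M^{o}_{k,d}(\sigma)$. Otherwise I pick a position with $k_{u}-1-l_{u}=0$ and apply the trivial relation; by (a)--(c) each resulting term is $\sigma^{(\mathrm{power})}$ times an iterated log-sine integral of strictly smaller depth, still of weight $k$ and with the same (odd) $A$-exponent sum $\le d$. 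Iterating, the process terminates because the depth strictly drops, and it cannot terminate at depth $0$, since a depth-$0$ index has $A$-exponent sum $0$, contradicting oddness; hence it ends in a $\mathbb{Q}$-linear combination of terms $\sigma^{m}\Ls_{\mathbf{k}'}^{\mathbf{l}'}(\sigma)$ with no position of $A$-exponent $0$, i.e.\ in generators of $M^{o}_{k,d}(\sigma)$. The even case is the identical argument, with the sole difference that depth $0$ --- a pure power of $\sigma$, which lies in $M^{e}_{k,d}(\sigma)$ --- is now an admissible terminal state.

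The main obstacle I anticipate is bookkeeping rather than conceptual: transcribing \cite[Proposition 2]{U} into precisely the form used above and verifying, term by term, that no intermediate or terminal expression ever leaves the index region defining $L^{o}_{k,d}(\sigma)$ or $M^{o}_{k,d}(\sigma)$ --- in particular that the depth never exceeds $k$, that the parity and the bound $\le d$ on $\sum_{u}(k_{u}-1-l_{u})$ are genuinely preserved at each step, and that the entrywise constraints ($k_{u}\ge1,\ l_{u}\ge0,\ k_{u}-1-l_{u}\ge0$, respectively $k_{u}\ge2,\ k_{u}-1-l_{u}\ge1$ for the $M$-sets) survive the absorption and reading-backwards operations. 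Once these invariants are pinned down, both inclusions, hence the stated equality of spans, are immediate.
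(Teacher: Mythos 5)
Your proposal is correct and follows essentially the same route as the paper: one inclusion by repeatedly applying the trivial relations of \cite[Proposition 2]{U} to eliminate positions with $k_{u}-1-l_{u}=0$ (tracking that weight, the $A$-exponent sum, and its parity are preserved while depth drops), and the reverse inclusion via the identity $\sigma^{m}=-m\Ls_{m}^{(m-1)}(\sigma)$ --- your two-term ``read-backwards'' relation is just this identity combined with the product/decomposition formula that the paper cites as \cite[Proposition 1]{U}. The only difference is that you spell out the bookkeeping (invariants (a)--(c) and the termination argument distinguishing the odd and even terminal states) that the paper compresses into two sentences.
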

\begin{proof}
By applying trivial relations \cite[Proposition 2]{U}, repeatedly, we can see that elements of $L^{o}_{k,d}(\sigma)$ and $L^{e}_{k,d}(\sigma)$ belong to ${\rm span}_{\mathbb{Q}}(M^{o}_{k,d}(\sigma))$ and ${\rm span}_{\mathbb{Q}}(M^{e}_{k,d}(\sigma))$, respectively. Conversely, by $\sigma^m=-m\Ls_{m}^{(m-1)}(\sigma)$ and the shuffle product formula \cite[Proposition 1]{U}, elements of  $M^{o}_{k,d}(\sigma)$ and $M^{e}_{k,d}(\sigma)$ belong to ${\rm span}_{\mathbb{Q}}(L^{o}_{k,d}(\sigma))$ and ${\rm span}_{\mathbb{Q}}(L^{e}_{k,d}(\sigma))$.
\end{proof}
The number of elements of $M^{o}_{k,d}(\sigma)$ and $M^{e}_{k,d}(\sigma)$ are evaluated as follows.
\begin{theorem}\label{th:Mcount}We have
\begin{align}
&\left|M^{o}_{0,d}(\sigma)\right|=\left|M^{o}_{1,d}(\sigma)\right|=0,\ \ &&\left|M^{e}_{0,d}(\sigma)\right|=\left|M^{e}_{1,d}(\sigma)\right|=1,\label{eq:Rinitialk}
\end{align}
and
\begin{align}\label{eq:Rrec}
\begin{split}
\left|M^{o}_{k,d}(\sigma)\right| &=\left|M^{o}_{k-1,d}(\sigma)\right|+\left|M^{e}_{k-1,d-1}(\sigma)\right|\quad(k\ge2,d\ge1),\\
\left|M^{e}_{k,d}(\sigma)\right| &=\left|M^{e}_{k-1,d}(\sigma)\right|+\left|M^{o}_{k-1,d-1}(\sigma)\right|\quad(k\ge2,d\ge1).
\end{split}
\end{align}
In particular, we have
\begin{align}\label{eq:Rkk}
\begin{split}
\left|M^{o}_{0,0}(\sigma)\right| &=\left|M^{o}_{1,1}(\sigma)\right|=0,\quad\left|M^{e}_{0,0}(\sigma)\right|=\left|M^{e}_{1,1}(\sigma)\right|=1,\\
\left|M^{o}_{k,k}(\sigma)\right| &=\left|M^{e}_{k,k}(\sigma)\right|= 2^{k-2}\quad(k\ge2).
\end{split}
\end{align}
\end{theorem}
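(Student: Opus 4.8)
The plan is to reduce the statement to a purely combinatorial count. First I would note that an element $\sigma^m\Ls_{k_1,\dots,k_n}^{(l_1,\dots,l_n)}(\sigma)$ of $M^o_{k,d}(\sigma)$ is specified by the data $(m;k_1,\dots,k_n;l_1,\dots,l_n)$, and that the substitution $e_u:=k_u-1-l_u$ turns the three constraints $k_u\ge2$, $l_u\ge0$, $k_u-1-l_u\ge1$ into just $e_u\ge1$ and $l_u\ge0$, since $k_u=e_u+l_u+1\ge2$ automatically. Under this substitution the weight is $k=m+\sum_{u=1}^n(e_u+l_u+1)$ and the quantity bounded by $d$ is $S:=\sum_{u=1}^n e_u$. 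Thus $|M^o_{k,d}(\sigma)|$ equals the number of tuples $(m;e_1,\dots,e_n;l_1,\dots,l_n)$ with $m\ge0$, $n\ge0$, $e_u\ge1$, $l_u\ge0$, $m+\sum_u(e_u+l_u+1)=k$, $S\le d$, and $S$ odd; for $M^e$ one instead asks $S$ even. In particular this count is independent of $\sigma$, so I abbreviate it $a^o_{k,d}$, respectively $a^e_{k,d}$. Then (\ref{eq:Rinitialk}) is immediate: a tuple of weight $0$ must be empty ($n=0$, $m=0$), and a tuple of weight $1$ must have $n=0$, $m=1$, because each block contributes $e_u+l_u+1\ge2$; in both cases $S=0$ is even.

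The core of the proof is a bijection establishing (\ref{eq:Rrec}). Fix $k\ge2$, $d\ge1$ and take a tuple counted by $a^o_{k,d}$; since $S$ is odd it has $n\ge1$. Split according to the leading power: part (A) consists of the tuples with $m\ge1$, part (B) of those with $m=0$. The map $m\mapsto m-1$ is a bijection from (A) onto the tuples of weight $k-1$ with $S$ odd and $S\le d$, i.e. onto those counted by $a^o_{k-1,d}$. For part (B) I would act on the first block $(e_1,l_1)$: if $e_1\ge2$, send $(0;(e_1,l_1),\dots)\mapsto(0;(e_1-1,l_1),\dots)$, which drops the weight and $S$ by one and keeps the leading power $0$; if $e_1=1$, send $(0;(1,l_1),(e_2,l_2),\dots,(e_n,l_n))\mapsto(l_1+1;(e_2,l_2),\dots,(e_n,l_n))$, which again drops the weight and $S$ by one but now produces a tuple with leading power $l_1+1\ge1$, and when $n=1$ it is simply the pure power $\sigma^{l_1+1}$. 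The two sub-cases have disjoint images (distinguished by whether the leading power is $0$), and together they hit every tuple of weight $k-1$ with $S$ even and $S\le d-1$ exactly once: one with leading power $0$ has $n\ge1$ and is the image under the first sub-case of $(e_1,l_1)\mapsto(e_1+1,l_1)$, while one with leading power $m'\ge1$ is the image under the second sub-case of the tuple obtained by prepending the block $(1,m'-1)$, the pure power $\sigma^{k-1}$ being covered since it has $m'=k-1\ge1$. Hence (B) is in bijection with the tuples counted by $a^e_{k-1,d-1}$, giving $a^o_{k,d}=a^o_{k-1,d}+a^e_{k-1,d-1}$; the companion identity follows by the same argument with the parity of $S$ swapped, where now the case $e_1=1$, $n=1$ does not arise in part (B) because there $S=1$ would be odd.

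For (\ref{eq:Rkk}) I would first observe that $k=m+\sum_u e_u+\sum_u l_u+n\ge S$, and more precisely $S\le k-1$ whenever $k\ge1$ (there is a block only if $n\ge1$), so the cutoff $S\le d$ is vacuous once $d\ge k-1$; in particular $a^\bullet_{k,d}=a^\bullet_{k,k-1}$ for $d\ge k-1$ and $a^\bullet_{k-1,k}=a^\bullet_{k-1,k-1}$ for $k\ge2$. Substituting the latter into (\ref{eq:Rrec}) yields $a^o_{k,k}=a^o_{k-1,k-1}+a^e_{k-1,k-1}$ and $a^e_{k,k}=a^e_{k-1,k-1}+a^o_{k-1,k-1}$ for $k\ge2$, so $a^o_{k,k}=a^e_{k,k}$ for $k\ge2$ and then $a^o_{k,k}=2a^o_{k-1,k-1}$ for $k\ge3$. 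Since (\ref{eq:Rinitialk}) together with one application of (\ref{eq:Rrec}) gives $a^o_{2,2}=a^e_{2,2}=1$, induction gives $a^o_{k,k}=a^e_{k,k}=2^{k-2}$ for $k\ge2$; the remaining values $a^o_{0,0}=a^o_{1,1}=0$ and $a^e_{0,0}=a^e_{1,1}=1$ are read off directly from (\ref{eq:Rinitialk}).

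The main obstacle is getting the bijection in the second paragraph exactly right: verifying that the two sub-cases of part (B) have disjoint images and that their union is precisely the set counted by $a^e_{k-1,d-1}$, with the boundary configurations — a first block having $e_1=1$, and the pure-power tuple $\sigma^{k-1}$ — treated correctly. Everything else is routine bookkeeping.
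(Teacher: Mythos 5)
Your proof is correct, and it reaches the same three milestones as the paper --- the base cases, the recursion (\ref{eq:Rrec}), and the specialization to $d=k$ --- but the way you establish the recursion is genuinely different. The paper first strips off the power of $\sigma$ by introducing auxiliary sets $N^{o}_{i,d}(\sigma)$, $N^{e}_{i,d}(\sigma)$ (the elements with $m=0$), writes $|M^{o}_{k,d}(\sigma)|=\sum_{i=0}^{k}|N^{o}_{i,d}(\sigma)|$, proves a recursion for $|N^{o}_{k,d}(\sigma)|$ by modifying the \emph{last} block via three maps ($(k_n,l_n)\mapsto(k_n+1,l_n+1)$, $(k_n,l_n)\mapsto(k_n+1,l_n)$, and appending a block $(2,0)$) whose images overlap and must be corrected by subtracting the doubly counted terms $(k_n+2,l_n+1)$, and finally telescopes the sums over $i$. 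You instead reparametrize by $e_u=k_u-1-l_u$, split on $m\ge1$ versus $m=0$, and construct an honest bijection acting on the \emph{first} block, with the two sub-cases of part (B) distinguished by the leading power of the image; this avoids both the auxiliary sets and the inclusion--exclusion step, at the cost of having to check the boundary configurations ($e_1=1$, and the pure power $\sigma^{k-1}$), which you do correctly, including the parity reason why the degenerate case cannot occur in the even recursion. Your explicit observation that the cutoff $S\le d$ is vacuous once $d\ge k-1$ is also needed, implicitly, in the paper's own derivation of (\ref{eq:Rkk}), since applying (\ref{eq:Rrec}) at $d=k$ produces the terms $|M^{\bullet}_{k-1,k}(\sigma)|$, which must be identified with $|M^{\bullet}_{k-1,k-1}(\sigma)|$; making this explicit is a small improvement. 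Both arguments are sound.
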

\begin{proof}
The equation (\ref{eq:Rinitialk}) is clear by the definition.
In order to prove (\ref{eq:Rrec}), we define
\begin{align*}
N^{o}_{k,d}(\sigma)=
\left\{\Ls_{k_{1},\dots,k_{n}}^{(l_{1}, \dots, l_{n})}(\sigma) \mathrel{}\middle|\mathrel{} \begin{gathered}k_{1}+\dots+k_{n}=k, 0 \le n \le k, \\ k_{1},\dots,k_{n} \ge 2, l_{1},\dots,l_{n} \ge 0,\\ k_{1} -1-l_{1},\dots,k_{n} -1-l_{n} \ge 1,\\\sum_{u=1}^{n}k_{u}-1-l_{u} \le d:\text{odd}.\end{gathered}\right\}
\end{align*}
and define $N^{e}_{k,d}(\sigma)$ analogously for the even case.
Then, we have
\begin{align*}
\left|M^{o}_{k,d}(\sigma)\right| &= \sum_{i=0}^{k}\left|N^{o}_{i,d}(\sigma)\right|,\\
\left|M^{e}_{k,d}(\sigma)\right| &= \sum_{i=0}^{k}\left|N^{e}_{i,d}(\sigma)\right|.
\end{align*}
Here, for $k\ge3$ and $d\ge1$, we have
\begin{align*}
\left|N^{o}_{k,d}(\sigma)\right| &=\left|\left\{\Ls_{k_{1},\dots,k_{n}+1}^{(l_{1}, \dots, l_{n}+1)}(\sigma) \mathrel{}\middle|\mathrel{} \Ls_{k_{1},\dots,k_{n}}^{(l_{1}, \dots, l_{n})}(\sigma) \in N^{o}_{k-1,d}(\sigma)\right\}\right|\\
&\quad+\left|\left\{\Ls_{k_{1},\dots,k_{n}+1}^{(l_{1}, \dots, l_{n})}(\sigma) \mathrel{}\middle|\mathrel{} \Ls_{k_{1},\dots,k_{n}}^{(l_{1}, \dots, l_{n})}(\sigma) \in N^{e}_{k-1,d-1}(\sigma)\right\}\right|\\
&\quad+\left|\left\{\Ls_{k_{1},\dots,k_{n},2}^{(l_{1}, \dots, l_{n},0)}(\sigma) \mathrel{}\middle|\mathrel{} \Ls_{k_{1},\dots,k_{n}}^{(l_{1}, \dots, l_{n})}(\sigma) \in N^{e}_{k-2,d-1}(\sigma)\right\}\right|\\
&\quad-\left|\left\{\Ls_{k_{1},\dots,k_{n}+2}^{(l_{1}, \dots, l_{n}+1)}(\sigma) \mathrel{}\middle|\mathrel{} \Ls_{k_{1},\dots,k_{n}}^{(l_{1}, \dots, l_{n})}(\sigma) \in N^{e}_{k-2,d-1}(\sigma)\right\}\right|\\
&=\left|N^{o}_{k-1,d}(\sigma)\right|+\left|N^{e}_{k-1,d-1}(\sigma)\right|.
\end{align*}
Similarly, for $k\ge3$ and $d\ge1$, we obtain
\begin{align*}
\left|N^{e}_{k,d}(\sigma)\right| =\left|N^{e}_{k-1,d}(\sigma)\right|+\left|N^{o}_{k-1,d-1}(\sigma)\right|.
\end{align*}
Therefore, for $k\ge2$ and $d\ge1$, we have
\begin{align*}
\left|M^{o}_{k,d}(\sigma)\right| &= \sum_{i=2}^{k-1}(\left|N^{o}_{i,d}(\sigma)\right|+\left|N^{e}_{i,d-1}(\sigma)\right|)+\left|N^{o}_{2,d}(\sigma)\right|+\left|N^{o}_{1,d}(\sigma)\right|+\left|N^{o}_{0,d}(\sigma)\right|\\
&=\left|M^{o}_{k-1,d}(\sigma)\right|+\left|M^{e}_{k-1,d-1}(\sigma)\right|-\left|N^{e}_{1,d-1}(\sigma)\right|-\left|N^{e}_{0,d-1}(\sigma)\right|+\left|N^{o}_{2,d}(\sigma)\right|\\
&=\left|M^{o}_{k-1,d}(\sigma)\right|+\left|M^{e}_{k-1,d-1}(\sigma)\right|,
\end{align*}
and
\begin{align*}
\left|M^{e}_{k,d}(\sigma)\right| &= \sum_{i=2}^{k-1}(\left|N^{e}_{i,d}(\sigma)\right|+\left|N^{o}_{i,d-1}(\sigma)\right|)+\left|N^{e}_{2,d}(\sigma)\right|+\left|N^{e}_{1,d}(\sigma)\right|+\left|N^{e}_{0,d}(\sigma)\right|\\
&=\left|M^{e}_{k-1,d}(\sigma)\right|+\left|M^{o}_{k-1,d-1}(\sigma)\right|-\left|N^{o}_{1,d-1}(\sigma)\right|-\left|N^{o}_{0,d-1}(\sigma)\right|+\left|N^{e}_{2,d}(\sigma)\right|\\
&=\left|M^{e}_{k-1,d}(\sigma)\right|+\left|M^{o}_{k-1,d-1}(\sigma)\right|
\end{align*}
which prove (\ref{eq:Rrec}).
By (\ref{eq:Rrec}), for $k\ge2$, we obtain
\begin{align*}
\left|M^{e}_{k,k}(\sigma)\right|+\left|M^{o}_{k,k}(\sigma)\right|=2(\left|M^{e}_{k-1,k-1}(\sigma)\right|+\left|M^{o}_{k-1,k-1}(\sigma)\right|).
\end{align*}
By adding $\left|M^{e}_{1,1}(\sigma)\right|+\left|M^{o}_{1,1}(\sigma)\right|=1$ to this, for $k\ge1$, we obtain
\begin{align*}
\left|M^{e}_{k,k}(\sigma)\right|+\left|M^{o}_{k,k}(\sigma)\right|=2^{k-1}.
\end{align*}
On the other hand, by (\ref{eq:Rrec}), for $k\ge2$, we obtain
\begin{align*}
\left|M^{e}_{k,k}(\sigma)\right|-\left|M^{o}_{k,k}(\sigma)\right|=0.
\end{align*}
Therefore, we can obtain (\ref{eq:Rkk}).
\end{proof}
By Theorem \ref{th:Mcount}, we can obtain $\dim \mathcal{L}^{o}_{k} \le2^{k-2}$ and $\dim \mathcal{L}^{e}_{k} \le 2^{k-2}$ for $k\ge2$.
\begin{conjecture}\label{con:genL}The following assertions hold.
\begin{enumerate}[(i)]
\item Every iterated log-sine integral at $\pi/3$ of weight $k$ and odd $d:=|\mathbf{k}-\mathbf{1}_{n}-\mathbf{l}|$ can be written as a $\mathbb{Q}$-linear combination of elements of $S^{o}_{k,d}$.
\item Every iterated log-sine integral at $\pi/3$ of weight $k$ and even $d:=|\mathbf{k}-\mathbf{1}_{n}-\mathbf{l}|$ can be written as a $\mathbb{Q}$-linear combination of elements of $S^{e}_{k,d}$.
\end{enumerate}
\end{conjecture}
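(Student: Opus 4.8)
The plan is to derive Conjecture \ref{con:genL} from Conjecture \ref{con:genZ} and Conjecture \ref{con:genCG}; the argument is therefore conditional on those two, and I expect essentially all of the honest difficulty to sit there, since they are $\SLs$-analogues of the (open) Hoffman-type generation theorems. Fix $\Ls_{\mathbf{k}}^{\mathbf{l}}(\pi/3)$ of weight $k=|\mathbf{k}|$ with $d:=|\mathbf{k}-\mathbf{1}_{n}-\mathbf{l}|$, and recall $k-d=|\mathbf{l}|+n$. (One may first reduce to the case $k_{u}\ge2$, $k_{u}-1-l_{u}\ge1$ via the identification $\mathrm{span}_{\mathbb{Q}}(L^{o/e}_{k,d}(\pi/3))=\mathrm{span}_{\mathbb{Q}}(M^{o/e}_{k,d}(\pi/3))$, but this step is optional.) First I would apply Theorem \ref{th:evalLs}, Proposition \ref{pr:ILitoF}, Definition \ref{def:F} and Remark \ref{re:f0}, using that a product of the multiple zeta values produced by the factors $f(0)$ is again in the $\mathbb{Q}$-span of multiple zeta values: this writes $\Ls_{\mathbf{k}}^{\mathbf{l}}(\pi/3)$ as a $\mathbb{Q}(i)$-linear combination of terms $\pi^{P}i^{C}\zeta(\mathbf{a})\Li_{\mathbf{k}'}(e^{i\pi/3})$ with $\mathbf{k}'$ admissible and, by weight homogeneity, $P+|\mathbf{a}|+|\mathbf{k}'|=k$. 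From the shape of the formula I would extract two bookkeeping facts: since each factor $(i\sigma)$ with $\sigma=\pi/3$ contributes equally to the exponent of $i$ and of $\pi$, while the remaining $i$'s come only from the prefactor $i^{|\mathbf{l}|+n}$ and from the factors $(-i\pi)^{|\mathbf{p}|}$ of Lemma \ref{le:LstoILi}, one has $C\equiv|\mathbf{l}|+n+P\pmod 2$; and counting the letters $e_{1}$ in the words $w_{\mathbf{j}}^{\mathbf{r}''}$ gives ${\rm dep}(\mathbf{a})+{\rm dep}(\mathbf{k}')\le|\mathbf{r}|\le d$.

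Next I would take the real part of (\ref{eq:main}) at $\sigma=\pi/3$, which equals the (real) value $\Ls_{\mathbf{k}}^{\mathbf{l}}(\pi/3)$: by Definition \ref{de:CG}, the real part of $i^{C}\Li_{\mathbf{k}'}(e^{i\pi/3})$ is $\pm\Cl_{\mathbf{k}'}(\pi/3)$ when $C+|\mathbf{k}'|$ is odd and $\pm\Gl_{\mathbf{k}'}(\pi/3)$ when $C+|\mathbf{k}'|$ is even (with $\Li_{\emptyset}=1$ read as a Glaisher value for the terms with $\mathbf{k}'=\emptyset$). Writing $\varepsilon=1$ in the Clausen case and $\varepsilon=0$ in the Glaisher case, and combining $P+|\mathbf{a}|+|\mathbf{k}'|=k$, $C\equiv|\mathbf{l}|+n+P$, $\varepsilon\equiv C+|\mathbf{k}'|$ and $k-d=|\mathbf{l}|+n$, one obtains the key parity relation $|\mathbf{a}|+\varepsilon\equiv d\pmod 2$ for every surviving term; every surviving term is thus a rational multiple of $\pi^{P}$ times $\zeta(\mathbf{a})$ times a multiple Clausen or Glaisher value of weight $|\mathbf{k}'|$ and depth ${\rm dep}(\mathbf{k}')$.

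To finish, I would apply Conjecture \ref{con:genZ} to write each $\zeta(\mathbf{a})$ as a $\mathbb{Q}$-combination of $\pi^{2m_{1}}\SLs(\mathbf{b})$ with all entries of $\mathbf{b}$ odd and $\ge3$, ${\rm dep}(\mathbf{b})\le{\rm dep}(\mathbf{a})$ and $2m_{1}+|\mathbf{b}|=|\mathbf{a}|$ (so ${\rm dep}(\mathbf{b})\equiv|\mathbf{b}|\equiv|\mathbf{a}|\bmod 2$), and Conjecture \ref{con:genCG} to write the multiple Clausen (resp.\ Glaisher) value as a $\mathbb{Q}$-combination of $\pi^{m_{2}}\SLs(\mathbf{c})$ with ${\rm dep}(\mathbf{c})$ odd (resp.\ even), ${\rm dep}(\mathbf{c})\le{\rm dep}(\mathbf{k}')$ and $m_{2}+|\mathbf{c}|=|\mathbf{k}'|$ (so ${\rm dep}(\mathbf{c})\equiv\varepsilon\bmod 2$). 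Expanding each product $\SLs(\mathbf{b})\SLs(\mathbf{c})$ by the shuffle formula (Proposition \ref{pr:SLsshuffle}) gives $\mathbb{Q}$-combinations of $\pi^{P+2m_{1}+m_{2}}\SLs(\mathbf{d})$ with $\mathbf{d}$ an interleaving of $\mathbf{b}$ and $\mathbf{c}$; every such term then has all entries $\ge2$, total weight $P+|\mathbf{a}|+|\mathbf{k}'|=k$, depth ${\rm dep}(\mathbf{d})={\rm dep}(\mathbf{b})+{\rm dep}(\mathbf{c})\le{\rm dep}(\mathbf{a})+{\rm dep}(\mathbf{k}')\le d$, and — by the two congruences above — ${\rm dep}(\mathbf{d})\equiv|\mathbf{a}|+\varepsilon\equiv d\pmod 2$. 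Hence it lies in $S^{o}_{k,d}$ when $d$ is odd and in $S^{e}_{k,d}$ when $d$ is even, which is the assertion. Apart from verifying the two bookkeeping facts of the first paragraph, the argument is pure weight- and depth-tracking; the main obstacle is the dependence on Conjectures \ref{con:genZ} and \ref{con:genCG}, an unconditional proof of which would presumably require a genuinely new idea.
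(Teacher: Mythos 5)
Your proposal is correct and follows essentially the same route as the paper, which also only establishes Conjecture \ref{con:genL} conditionally: Theorem \ref{th:ZCGtoL} derives it from Conjectures \ref{con:genZ} and \ref{con:genCG} by feeding the evaluation formula (\ref{eq:main}) at $\sigma=\pi/3$ into those conjectures and multiplying the resulting $\SLs$ values via Proposition \ref{pr:SLsshuffle}. Your explicit tracking of the power of $i$ and the depth bound via the number of $e_{1}$'s matches the paper's bookkeeping with the spaces $\mathcal{S}^{o}_{k,d}+i\mathcal{S}^{e}_{k,d}$, so the two arguments are the same in substance.
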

\begin{conjecture}\label{con:basisL}The following assertions hold.
\begin{enumerate}[(i)]
 \item The set of the real numbers $S^{o}_{k,k}$ is a basis of $\mathcal{L}^{o}_{k}$.
 \item The set of the real numbers $S^{e}_{k,k}$ is a basis of $\mathcal{L}^{e}_{k}$.
 \item The set of the real numbers $S^{o}_{k,k}\cup S^{e}_{k,k}$ is a basis of $\mathcal{L}^{o}_{k}+\mathcal{L}^{e}_{k}$.
\end{enumerate}
\end{conjecture}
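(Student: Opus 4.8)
Parts (i), (ii) and (iii) are closely linked, and I would prove them together. For (i) (and symmetrically (ii)) the natural plan has three steps: (A) the inclusions $S^{o}_{k,k}\subseteq\mathcal{L}^{o}_{k}$ and $S^{e}_{k,k}\subseteq\mathcal{L}^{e}_{k}$; (B) that $S^{o}_{k,k}$ spans $\mathcal{L}^{o}_{k}$ and $S^{e}_{k,k}$ spans $\mathcal{L}^{e}_{k}$; (C) that $S^{o}_{k,k}$ and $S^{e}_{k,k}$ are $\mathbb{Q}$-linearly independent. Granting (i) and (ii), part (iii) is equivalent to $\mathcal{L}^{o}_{k}\cap\mathcal{L}^{e}_{k}=0$, that is, to the absence of $\mathbb{Q}$-linear relations mixing the two families; this is a statement of the same nature as (C) and must be handled alongside it.

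Step (A) is unconditional. Specializing Theorem~\ref{th:evalSLs} to $\sigma=\pi/3$ expresses $\SLs(k_{1},\dots,k_{n})$ as a $\mathbb{Q}$-linear combination of the values $(\pi/3)^{m'}\Ls_{\mathbf{j}+\mathbf{2}_{n}}^{\mathbf{j}}(\pi/3)$ with $0\le j_{u}\le k_{u}-2$ and $m'=\sum_{u=1}^{n}(k_{u}-2-j_{u})$; each such iterated log-sine integral has weight $|\mathbf{j}|+2n$ and total $A$-exponent $|(\mathbf{j}+\mathbf{2}_{n})-\mathbf{1}_{n}-\mathbf{j}|=n$. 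Since $(\pi/3)^{s}=-s\,\Ls_{s}^{(s-1)}(\pi/3)$ is an iterated log-sine integral of $A$-exponent $0$, and the shuffle product for iterated log-sine integrals \cite[Proposition~1]{U} adds both weights and $A$-exponents, the product $\pi^{m}(\pi/3)^{m'}\Ls_{\mathbf{j}+\mathbf{2}_{n}}^{\mathbf{j}}(\pi/3)$ is a $\mathbb{Q}$-linear combination of iterated log-sine integrals of weight $m+k_{1}+\dots+k_{n}$ whose $A$-exponent has the parity of $n$. Hence $\pi^{m}\SLs(k_{1},\dots,k_{n})$, of weight $k=m+k_{1}+\dots+k_{n}$, lies in $\mathcal{L}^{o}_{k}$ when $n$ is odd and in $\mathcal{L}^{e}_{k}$ when $n$ is even. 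Step (B) is precisely Conjecture~\ref{con:genL}, so (i) and (ii) reduce to Conjecture~\ref{con:genL} together with (C).

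Step (C) is the main obstacle. The bookkeeping is encouraging: solving the recursions $|S^{o}_{k,d}|=|S^{o}_{k-1,d}|+|S^{e}_{k-2,d-1}|$ and $|S^{e}_{k,d}|=|S^{e}_{k-1,d}|+|S^{o}_{k-2,d-1}|$ by an elementary count of compositions into parts $\ge 2$ gives, on the diagonal $d=k$, the coupled recursion of the sequences $I(k),R(k)$ of \cite{BBK} with the same initial data, whence $|S^{o}_{k,k}|=I(k)$, $|S^{e}_{k,k}|=R(k)$ and $|S^{o}_{k,k}\cup S^{e}_{k,k}|=I(k)+R(k)=F_{k+1}$; thus (i)--(iii) assert exactly that $\dim\mathcal{L}^{o}_{k}=I(k)$, $\dim\mathcal{L}^{e}_{k}=R(k)$ and $\dim(\mathcal{L}^{o}_{k}+\mathcal{L}^{e}_{k})=F_{k+1}$. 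The best upper bound available, however, is $\dim\mathcal{L}^{o}_{k},\dim\mathcal{L}^{e}_{k}\le 2^{k-2}$ from Theorem~\ref{th:Mcount}, and $2^{k-2}$ grows strictly faster than $I(k)$ and $R(k)$; so the trivial relations of \cite[Proposition~2]{U} cannot by themselves force the collapse required for (B) (which is Conjecture~\ref{con:genL}), and the matching lower bound for (C) is a \emph{genuine} non-vanishing statement for $\mathbb{Q}$-linear combinations of periods attached to the primitive sixth root of unity, of the same character as the conjectured lower bounds for $\dim\mathcal{Z}_{k}$ \cite{Z} and for $\dim\mathcal{C}_{k},\dim\mathcal{G}_{k}$ \cite{BBK}. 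A complete proof would presumably require realizing $\mathcal{L}^{o}_{k}$ and $\mathcal{L}^{e}_{k}$ as period spaces of mixed Tate motives over $\mathbb{Z}[\zeta_{6}][1/6]$, together with a fullness theorem in the style of Brown \cite{B}, which is not available at this level. What I would actually carry out is therefore to isolate the unconditional content, namely (A) and the cardinality identities; to verify (C) numerically in low weight, using Theorems~\ref{th:evalLs} and~\ref{th:evalSLs} to produce high-precision values and an integer-relation algorithm to test for relations; and to prove the restricted cases of (i) and (ii) in depth $\le 1$ and for the indices $(\{1\}^{a},2,\{1\}^{b})$, which follow from the explicit $\SLs$-reductions of Section~\ref{se:conjMCVs}.
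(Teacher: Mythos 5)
This statement is a conjecture, and the paper gives no proof of it: it is supported only by numerical experiments, the cardinality identities showing it would imply $\dim\mathcal{L}^{o}_{k}=I(k)$ and $\dim\mathcal{L}^{e}_{k}=R(k)$, and the conditional reduction of the spanning part to Conjectures~\ref{con:genZ} and~\ref{con:genCG} via Theorem~\ref{th:ZCGtoL}. Your proposal correctly recognizes this and isolates essentially the same unconditional content (the inclusions via Theorem~\ref{th:evalSLs} and the shuffle product, the counting, the reduction of spanning to Conjecture~\ref{con:genL}, and the genuinely open linear-independence lower bound), so it is consistent with the paper's own treatment.
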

If Conjecture \ref{con:basisL} is true, then $\dim \mathcal{L}^{o}_{k}=I(k)$ and $\dim \mathcal{L}^{e}_{k}=R(k)$ hold.

The following is a table on the dimensions of $\mathcal{L}^{o}_{k}$.
\begin{table}[htbp]
\centering
  \begin{tabular}{|c|c|c|c|c|c|c|c|c|c|c|c|c} \hline
     $k$& 0 & 1 & 2 & 3 & 4 &5 &6 &7&8&9&10&$\cdots$\\ \hline 
    $|L^{o}_{k,k}(\sigma)|$ &0&0& 1 & 3 & 9 &25&68&182&483&1275&3355&$\cdots$\\ \hline 
    $|M^{o}_{k,k}(\sigma)|$&0&0 & 1 & 2 & 4 &8&16&32&64&128&256&$\cdots$\\ \hline
    $I(k)$&0& 0& 1 & 2 & 3 &4&6&10&17&28&45&$\cdots$\\ \hline
  \end{tabular}
\end{table}\\
Here, $|L^{o}_{k,k}(\sigma)|$, $|M^{o}_{k,k}(\sigma)|$, and $I(k)$ are the number of generators of the definition of $\mathcal{L}^{o}_{k}$, the upper bound of $\dim \mathcal{L}^{o}_{k}$ given by trivial relations \cite[Proposition 2]{U}, and the conjectured dimension of $\mathcal{L}^{o}_{k}$, respectively.

The following table is the even version of the table above.
\begin{table}[htbp]
\centering
  \begin{tabular}{|c|c|c|c|c|c|c|c|c|c|c|c|c} \hline
     $k$& 0 & 1 & 2 & 3 & 4 &5 &6 &7&8&9&10&$\cdots$\\ \hline 
    $|L^{e}_{k,k}(\sigma)|$ &1&1& 2 & 5 & 12 &30&76&195&504&1309&3410&$\cdots$\\ \hline 
    $|M^{e}_{k,k}(\sigma)|$&1&1 & 1 & 2 & 4 &8&16&32&64&128&256&$\cdots$\\ \hline
    $R(k)$&1& 1& 1 & 1 & 2 &4&7&11&17&27&44&$\cdots$\\ \hline
  \end{tabular}
\end{table}

Let $|L_{k,k}(\sigma)| = |L^{o}_{k,k}(\sigma)| + |L^{e}_{k,k}(\sigma)|$ and $|M_{k,k}(\sigma)| = |M^{o}_{k,k}(\sigma)| + |M^{e}_{k,k}(\sigma)|$. Then, a table on the dimensions of $\mathcal{L}^{o}_{k}+\mathcal{L}^{e}_{k}$ is as follows.
\newpage
\begin{table}[htbp]
\centering
  \begin{tabular}{|c|c|c|c|c|c|c|c|c|c|c|c|c} \hline
     $k$& 0 & 1 & 2 & 3 & 4 &5 &6 &7&8&9&10&$\cdots$\\ \hline 
    $|L_{k,k}(\sigma)|$ &1&1& 3 & 8 & 21 &55&144&377&987&2584&6765&$\cdots$\\ \hline 
    $|M_{k,k}(\sigma)|$&1 & 1 & 2 & 4 &8&16&32&64&128&256&512&$\cdots$\\ \hline
    $W(k)$& 1& 1 & 2 & 3 &5&8&13&21&34&55&89&$\cdots$\\ \hline
  \end{tabular}
\end{table}

Conjecture \ref{con:genL} follows from Conjecture \ref{con:genZ} and Conjecture \ref{con:genCG} as follows. Therefore, Conjecture \ref{con:genL} can be checked numerically the weight up to $9$, indirectly.
\begin{theorem}\label{th:ZCGtoL}
If Conjecture \ref{con:genZ} and Conjecture \ref{con:genCG} are true, then Conjecture \ref{con:genL} is true.
\end{theorem}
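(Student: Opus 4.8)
The plan is to evaluate \eqref{eq:main} at $\sigma=\pi/3$, take real parts, and then feed the resulting multiple zeta, Clausen and Glaisher values into Conjectures~\ref{con:genZ} and~\ref{con:genCG}, all the while tracking the weight and one $\mathbb{Z}/2$-parity. First I would specialize Theorem~\ref{th:evalLs} to $\sigma=\pi/3$ and expand $F_{\mathbf{q}+\mathbf{l}}^{\mathbf{r}}(\pi/3)$ through Definitions~\ref{def:f} and~\ref{def:F} together with Remark~\ref{re:f0}. Reading off those formulas, the imaginary unit enters a term only through the explicit prefactors $i^{|\mathbf{l}|+n}$ and $(-i\pi)^{|\mathbf{p}|}$ and through a factor $(i\sigma)^{a}$ inside $f$; since $\sigma=\pi/3$, that last factor contributes the same exponent $a$ to $i$ and to $\pi$. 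Hence $\Ls_{\mathbf{k}}^{\mathbf{l}}(\pi/3)$ is a $\mathbb{Q}$-linear combination of terms $c\,i^{e}\pi^{m}\zeta(\mathbf{b})\Li_{\mathbf{a}}(e^{i\pi/3})$ with $c\in\mathbb{Q}$ and $\mathbf{b},\mathbf{a}$ admissible (or empty) indices, and, setting $k=|\mathbf{k}|$ and $d=|\mathbf{k}-\mathbf{1}_{n}-\mathbf{l}|$: by homogeneity of \eqref{eq:main} in the weight one has $m+|\mathbf{b}|+|\mathbf{a}|=k$, and by the observation just made $e=|\mathbf{l}|+n+m$. Tracing the depths through the $f$'s, $\Li_{\mathbf{a}}$ has depth at most the weight $|\mathbf{r}^{(h)}|$ of the last block of $\mathbf{r}$ while $\zeta(\mathbf{b})$ (the product of the $f(0)$-values, reduced by the harmonic product to a $\mathbb{Q}$-linear combination of single multiple zeta values of weight $|\mathbf{b}|$) has depth at most $|\mathbf{r}|-|\mathbf{r}^{(h)}|$, so ${\rm dep}(\mathbf{a})+{\rm dep}(\mathbf{b})\le|\mathbf{r}|\le|\mathbf{p}|+|\mathbf{q}|+|\mathbf{r}|=d$.

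Next I would take real parts, which is legitimate since $\Ls_{\mathbf{k}}^{\mathbf{l}}(\pi/3)$ is real. By Definition~\ref{de:CG}, $\Re\bigl(i^{e}\Li_{\mathbf{a}}(e^{i\pi/3})\bigr)$ is a rational multiple of $\Cl_{\mathbf{a}}(\pi/3)$ when $e+|\mathbf{a}|$ is odd and of $\Gl_{\mathbf{a}}(\pi/3)$ when $e+|\mathbf{a}|$ is even (the case $\mathbf{a}=\emptyset$ being subsumed in the latter). So $\Ls_{\mathbf{k}}^{\mathbf{l}}(\pi/3)$ is a $\mathbb{Q}$-linear combination of products $\pi^{m}\zeta(\mathbf{b})\Cl_{\mathbf{a}}(\pi/3)$ and $\pi^{m}\zeta(\mathbf{b})\Gl_{\mathbf{a}}(\pi/3)$, each of total weight $k$. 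The decisive parity identity is $|\mathbf{b}|+e+|\mathbf{a}|\equiv d\pmod 2$: substituting $|\mathbf{a}|=k-m-|\mathbf{b}|$ and $e=|\mathbf{l}|+n+m$ gives $|\mathbf{b}|+e+|\mathbf{a}|\equiv k+n+|\mathbf{l}|\pmod 2$, while $d=k-n-|\mathbf{l}|\equiv k+n+|\mathbf{l}|\pmod 2$. It follows that $|\mathbf{b}|\equiv d+1\pmod 2$ in every surviving Clausen term and $|\mathbf{b}|\equiv d\pmod 2$ in every surviving Glaisher term.

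Finally I would invoke the hypotheses. Conjecture~\ref{con:genZ} gives $\zeta(\mathbf{b})\in{\rm span}_{\mathbb{Q}}\,S'_{|\mathbf{b}|,\,{\rm dep}(\mathbf{b})}$, and each generator there is $\pi^{2m'}\SLs(k_{1},\dots,k_{\nu})$ with the $k_{i}$ odd and $\ge3$, so its $\SLs$-depth satisfies $\nu\le{\rm dep}(\mathbf{b})$ and $\nu\equiv|\mathbf{b}|\pmod 2$ (a sum of $\nu$ odd numbers is $\sum k_{i}$, and $|\mathbf{b}|=2m'+\sum k_{i}$). Conjecture~\ref{con:genCG}(i) gives $\Cl_{\mathbf{a}}(\pi/3)\in{\rm span}_{\mathbb{Q}}\,S^{o}_{|\mathbf{a}|,\,{\rm dep}(\mathbf{a})}$, of odd $\SLs$-depth $\le{\rm dep}(\mathbf{a})$, and (ii) gives $\Gl_{\mathbf{a}}(\pi/3)\in{\rm span}_{\mathbb{Q}}\,S^{e}_{|\mathbf{a}|,\,{\rm dep}(\mathbf{a})}$, of even $\SLs$-depth $\le{\rm dep}(\mathbf{a})$. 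Substituting, and reducing the arising products of two shifted log-sine integrals by the shuffle product formula (Proposition~\ref{pr:SLsshuffle}), which rewrites a product of $\SLs$ of depths $\nu_{1}$ and $\nu_{2}$ as a $\mathbb{Z}_{\ge0}$-combination of $\SLs$ of depth $\nu_{1}+\nu_{2}$ with the same (hence $\ge2$) arguments, one obtains $\Ls_{\mathbf{k}}^{\mathbf{l}}(\pi/3)$ as a $\mathbb{Q}$-linear combination of nonnegative powers of $\pi$ times shifted log-sine integrals of total weight $k$, arguments $\ge2$, depth at most ${\rm dep}(\mathbf{b})+{\rm dep}(\mathbf{a})\le d$, and $\SLs$-depth $\equiv|\mathbf{b}|+1\equiv d$ in the Clausen case and $\equiv|\mathbf{b}|\equiv d$ in the Glaisher case, modulo $2$. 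Thus when $d$ is odd these all lie in $S^{o}_{k,d}$ and when $d$ is even they all lie in $S^{e}_{k,d}$, which is Conjecture~\ref{con:genL}.

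The hard part will be the bookkeeping of the first two paragraphs: extracting from the definitions of $F_{\mathbf{q}}^{\mathbf{r}}$ and $f_{\mathbf{q}}^{\mathbf{r}}$ the two facts that force the parities to match, namely that powers of $i$ beyond the prefactors arise only from $(i\sigma)^{a}$, so that the identity $e=|\mathbf{l}|+n+m$ holds exactly, and that ${\rm dep}(\mathbf{a})+{\rm dep}(\mathbf{b})\le d$, which uses that a block $f_{\mathbf{q}^{(j)}}^{\mathbf{r}^{(j)}}(0)$ with $\mathbf{r}^{(j)}=(\{0\}^{\bullet})$ vanishes and that the word $w_{\mathbf{j}}^{\mathbf{r}''}$ carries exactly $|\mathbf{r}''|$ copies of $e_{1}$. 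With those in hand, the remainder is the parity arithmetic above plus a direct appeal to Conjectures~\ref{con:genZ} and~\ref{con:genCG} and to Proposition~\ref{pr:SLsshuffle}.
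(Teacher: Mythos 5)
Your proposal is correct and follows essentially the same route as the paper's proof: specialize Theorem~\ref{th:evalLs} at $\sigma=\pi/3$, apply Conjectures~\ref{con:genZ} and~\ref{con:genCG} to the $f_{\mathbf{q}}^{\mathbf{r}}(0)$-blocks and to $L(w_{\mathbf{j}}^{\mathbf{r}''};e^{i\pi/3})$, track weight, the depth bound $|\mathbf{r}|\le|\mathbf{k}-\mathbf{1}_{n}-\mathbf{l}|$, and the parity of the accumulated power of $i$, and close up the products with Proposition~\ref{pr:SLsshuffle}. The only difference is organizational: the paper keeps everything inside the spans $\mathcal{S}'$, $\mathcal{S}^{o}$, $\mathcal{S}^{e}$ and uses their multiplicativity block by block, while you first consolidate each term into $i^{e}\pi^{m}\zeta(\mathbf{b})\Li_{\mathbf{a}}(e^{i\pi/3})$ and take real parts before invoking the conjectures.
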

\begin{proof}
Let $\mathcal{S}^{o}_{k,d}=\text{span}_{\mathbb{Q}}(S^{o}_{k,d})$ and  $\mathcal{S}^{e}_{k,d}=\text{span}_{\mathbb{Q}}(S^{e}_{k,d})$.
We show that the right hand side of (\ref{eq:main}) with $\sigma=\pi/3$ belongs to $\mathcal{S}^{o}_{|\mathbf{k}|,|\mathbf{k}-\mathbf{1}_{n}-\mathbf{l}|}$ if $|\mathbf{k}-\mathbf{1}_{n}-\mathbf{l}|$ is odd, and belongs to  $\mathcal{S}^{e}_{|\mathbf{k}|,|\mathbf{k}-\mathbf{1}_{n}-\mathbf{l}|}$ if $|\mathbf{k}-\mathbf{1}_{n}-\mathbf{l}|$ is even assuming Conjecture \ref{con:genZ} and Conjecture \ref{con:genCG}. By (\ref{eq:f(0)}) and Conjecture \ref{con:genZ}, we obtain 
\begin{align*}
f_{\mathbf{q}}^{\mathbf{r}}(0) \in \mathcal{S}^{\prime}_{|\mathbf{q}|+|\mathbf{r}|+n,|\mathbf{r}|}.
\end{align*}
Therefore, we obtain
\begin{align}
&\prod_{j=1}^{h-1}f_{\mathbf{q}^{(j)}}^{\mathbf{r}^{(j)}}(0)\label{eq:th10,1} \\
&\in \mathcal{S}^{\prime}_{\sum_{j=1}^{h-1}|\mathbf{q}^{(j)}| + |\mathbf{r}^{(j)}| + {\rm dep}(\mathbf{r}^{(j)}), \sum_{j=1}^{h-1}|\mathbf{r}^{(j)}|}\nonumber\\
&\subset\begin{cases}\mathcal{S}^{o}_{\sum_{j=1}^{h-1}|\mathbf{q}^{(j)}| + |\mathbf{r}^{(j)}| + {\rm dep}(\mathbf{r}^{(j)}), \sum_{j=1}^{h-1}|\mathbf{r}^{(j)}|}\\
\qquad\qquad\qquad\qquad\qquad(\sum_{j=1}^{h-1}|\mathbf{q}^{(j)}| + |\mathbf{r}^{(j)}| + {\rm dep}(\mathbf{r}^{(j)}):{\rm odd}),\\
\mathcal{S}^{e}_{\sum_{j=1}^{h-1}|\mathbf{q}^{(j)}| + |\mathbf{r}^{(j)}| + {\rm dep}(\mathbf{r}^{(j)}), \sum_{j=1}^{h-1}|\mathbf{r}^{(j)}|}\\
\qquad\qquad\qquad\qquad\qquad(\sum_{j=1}^{h-1}|\mathbf{q}^{(j)}| + |\mathbf{r}^{(j)}| + {\rm dep}(\mathbf{r}^{(j)}):{\rm even}).\end{cases}\nonumber
\end{align}
On the other hand, by Conjecture \ref{con:genCG} we have
\begin{align*}
L(w_{\mathbf{r}''}^{\mathbf{j}};e^{i\pi/3})\in\begin{cases} \mathcal{S}^{o}_{|\mathbf{r}|+|\mathbf{j}|+n'',|\mathbf{r}|}+i\mathcal{S}^{e}_{|\mathbf{r}|+|\mathbf{j}|+n'',|\mathbf{r}|}\quad&|\mathbf{r}|+|\mathbf{j}|+n'':{\rm odd},\\
\mathcal{S}^{e}_{|\mathbf{r}|+|\mathbf{j}|+n'',|\mathbf{r}|}+i\mathcal{S}^{o}_{|\mathbf{r}|+|\mathbf{j}|+n'',|\mathbf{r}|}\quad&|\mathbf{r}|+|\mathbf{j}|+n'':{\rm even}.\end{cases}
\end{align*}
By multiplying this expression by $(i\pi/3)^{|\overline{\mathbf{q}}|-|\mathbf{j}|}$, we obtain
\begin{align*}
(i\pi/3)^{|\overline{\mathbf{q}}|-|\mathbf{j}|}L(w_{\mathbf{r}''}^{\mathbf{j}};e^{i\pi/3})\in\begin{cases} \mathcal{S}^{o}_{|\mathbf{q}|+|\mathbf{r}|+n,|\mathbf{r}|}+i\mathcal{S}^{e}_{|\mathbf{q}|+|\mathbf{r}|+n,|\mathbf{r}|}\quad&|\mathbf{q}|+|\mathbf{r}|+n:{\rm odd},\\
\mathcal{S}^{e}_{|\mathbf{q}|+|\mathbf{r}|+n,|\mathbf{r}|}+i\mathcal{S}^{o}_{|\mathbf{q}|+|\mathbf{r}|+n,|\mathbf{r}|}\quad&|\mathbf{q}|+|\mathbf{r}|+n:{\rm even}.\end{cases}
\end{align*}
Therefore, we obtain
\begin{align}
&f_{\mathbf{q}^{(h)}}^{\mathbf{r}^{(h)}}(\pi/3)-f_{\mathbf{q}^{(h)}}^{\mathbf{r}^{(h)}}(0)\label{eq:th10,2} \\
&\in\begin{cases} \mathcal{S}^{o}_{|\mathbf{q}^{(h)}|+|\mathbf{r}^{(h)}|+{\rm dep}(\mathbf{q}^{(j)}),|\mathbf{r}^{(h)}|}+i\mathcal{S}^{e}_{|\mathbf{q}^{(h)}|+|\mathbf{r}^{(h)}|+{\rm dep}(\mathbf{q}^{(j)}),|\mathbf{r}^{(h)}|}\\
\qquad\qquad\qquad\qquad\qquad\qquad(|\mathbf{q}^{(h)}|+|\mathbf{r}^{(h)}|+{\rm dep}(\mathbf{q}^{(j)}):{\rm odd}),\\
\mathcal{S}^{e}_{|\mathbf{q}^{(h)}|+|\mathbf{r}^{(h)}|+{\rm dep}(\mathbf{q}^{(j)}),|\mathbf{r}^{(h)}|}+i\mathcal{S}^{o}_{|\mathbf{q}^{(h)}|+|\mathbf{r}^{(h)}|+{\rm dep}(\mathbf{q}^{(j)}),|\mathbf{r}^{(h)}|}\\
\qquad\qquad\qquad\qquad\qquad\qquad(|\mathbf{q}^{(h)}|+|\mathbf{r}^{(h)}|+{\rm dep}(\mathbf{q}^{(j)}):{\rm even}).\end{cases}\nonumber
\end{align}
By (\ref{eq:th10,1}) and (\ref{eq:th10,2}), we obtain
\begin{align*}
F_{\mathbf{q}}^{\mathbf{r}}(\pi/3)\in\begin{cases} \mathcal{S}^{o}_{|\mathbf{q}|+|\mathbf{r}|+n,|\mathbf{r}|}+i\mathcal{S}^{e}_{|\mathbf{q}|+|\mathbf{r}|+n,|\mathbf{r}|}
\quad&|\mathbf{q}|+|\mathbf{r}|+n:{\rm odd},\\
\mathcal{S}^{e}_{|\mathbf{q}|+|\mathbf{r}|+n,|\mathbf{r}|}+i\mathcal{S}^{o}_{|\mathbf{q}|+|\mathbf{r}|+n,|\mathbf{r}|}
\quad&|\mathbf{q}|+|\mathbf{r}|+n:{\rm even}.\end{cases}
\end{align*}
Noting $\mathbf{p}+\mathbf{q}+\mathbf{r}=\mathbf{k}-\mathbf{1}_{n}-\mathbf{l}$, we obtain
\begin{align*}
i^{|\mathbf{l}|+n}(-i\pi)^{|\mathbf{p}|}F_{\mathbf{q}+\mathbf{l}}^{\mathbf{r}}(\pi/3)\in\begin{cases} \mathcal{S}^{o}_{|\mathbf{k}|,|\mathbf{r}|}+i\mathcal{S}^{e}_{|\mathbf{k}|,|\mathbf{r}|}
\quad&|\mathbf{k}-\mathbf{1}_{n}-\mathbf{l}|:{\rm odd},\\
\mathcal{S}^{e}_{|\mathbf{k}|,|\mathbf{r}|}+i\mathcal{S}^{o}_{|\mathbf{k}|,|\mathbf{r}|}
\quad&|\mathbf{k}-\mathbf{1}_{n}-\mathbf{l}|:{\rm even}.\end{cases}
\end{align*}
Since $|\mathbf{r}|$ takes the maximum value $|\mathbf{k}-\mathbf{1}_{n}-\mathbf{l}|$ when $|\mathbf{p}|=|\mathbf{q}|=0$, we obtain
\begin{align*}
\Ls_{\mathbf{k}}^{\mathbf{l}}(\sigma)\in\begin{cases} \mathcal{S}^{o}_{|\mathbf{k}|,|\mathbf{k}-\mathbf{1}_{n}-\mathbf{l}|}\quad&|\mathbf{k}-\mathbf{1}_{n}-\mathbf{l}|:{\rm odd},\\
\mathcal{S}^{e}_{|\mathbf{k}|,|\mathbf{k}-\mathbf{1}_{n}-\mathbf{l}|}\quad&|\mathbf{k}-\mathbf{1}_{n}-\mathbf{l}|:{\rm even}.\end{cases}
\end{align*}
\end{proof}
\section*{Acknowledgment}
The author is deeply grateful to Prof.\ Kohji Matsumoto, Prof.\ Koji Tasaka 
and all those attending the seminar at Kindai University, including Prof.\ Kentaro Ihara, Prof.\ Yayoi Nakamura and Prof.\ Yasuo Ohno for their helpful comments. He is also deeply grateful to Dr.\ Minoru Hirose for providing the author a computer program for numerical evaluation of multiple zeta values and multiple polylogarithms.

\vspace{1mm}

\vspace{4mm}

{\footnotesize
{\sc
\noindent
Graduate School of Mathematics, Nagoya University,\\
Chikusa-ku, Nagoya 464-8602, Japan.
}\\
{\it E-mail address}, R. Umezawa\hspace{1.75mm}: {\tt
m15016w@math.nagoya-u.ac.jp}\\
}

\end{document}